\documentclass[10pt,a4paper,twosided]{article}

\usepackage[a4paper,margin=2.54cm]{geometry}
\usepackage{bookmark}

\usepackage[T1]{fontenc}
\usepackage[utf8]{inputenc} 
\usepackage[english]{babel}
\usepackage{mathtools,amssymb,amsthm}
\usepackage[mathscr]{eucal}
\usepackage{faktor}
\numberwithin{equation}{section}
\allowdisplaybreaks

\usepackage{graphicx}
\usepackage[all]{xy}
\usepackage{tikz-cd}

\usepackage{enumerate}

\usepackage{xcolor}
\definecolor{codegray}{gray}{0.95}

\usepackage{hyperref}
\usepackage{orcidlink}

\hypersetup{
  colorlinks=true,
  linkcolor=blue!50!black,
  citecolor=blue!50!black,
  urlcolor=blue!50!black,
  pdfauthor={Alexander Nath},
  pdftitle={Deck transformations for developable complexes of groups}
}

\usepackage{fancyhdr}
\fancypagestyle{plain}{
  \fancyhf{}
  \fancyfoot[C]{\thepage}
  \fancyhead[C]{%
    \ifodd\value{page}
      Deck transformations of developable complexes of groups%
    \else
      A.\ Nath%
    \fi
  }
  
}

\usepackage{authblk}

\newcounter{aufzi}
\newenvironment{aufzi}{\begin{list}{ {\upshape\alph{aufzi})}}{
  \usecounter{aufzi}
  \topsep0.5ex
  \parsep0cm
  \itemsep0.4ex
  \leftmargin0.8cm
  \labelwidth0.5cm
  \labelsep0.3cm
}}{\end{list}}

\newcounter{aufzii}
\newenvironment{aufzii}{\begin{list}{\hfill {\upshape(\roman{aufzii})}}{
  \usecounter{aufzii}
  \topsep0.5ex
  \parsep0cm
  \itemsep0.4ex
  \leftmargin0.8cm
  \labelwidth0.5cm
  \labelsep0.3cm
}}{\end{list}}

\swapnumbers

\newtheoremstyle{Aussage}
{5pt}{5pt}{\slshape}{}{\sffamily\bfseries}{:}{.5em}{}

\theoremstyle{Aussage}
\newtheorem{lem}{Lemma}[section]
\newtheorem{thm}[lem]{Theorem}
\newtheorem{cor}[lem]{Corollary}
\newtheorem{lemdef}[lem]{Definition and Lemma}

\newtheorem{cordef}[lem]{Definition and Corollary}

\newtheoremstyle{Note}
{5pt}{5pt}{}{}{\sffamily\bfseries}{:}{.5em}{}

\theoremstyle{Note}
\newtheorem{defi}[lem]{Definition}
\newtheorem{exa}[lem]{Example}
\newtheorem{rem}[lem]{Remark}
\newtheorem{rems}[lem]{Remarks}
\newtheorem{prop}[lem]{Proposition}
\newtheorem{notation}[lem]{Notation}

\newcommand{\MSC}[1]{\gdef\MSCtext{#1}}
\newcommand{\Keywords}[1]{\gdef\KWtext{#1}}

\gdef\MSCtext{}
\gdef\KWtext{}

\newcommand{\printMSCandKeywords}{
  \par\vspace{0.75em}
  \begingroup\small
  \ifx\MSCtext\empty\else
    \noindent\textbf{2020 Mathematics Subject Classification.}~\MSCtext\par
  \fi
  \ifx\KWtext\empty\else
    \noindent\textbf{Keywords.}~\KWtext\par
  \fi
  \endgroup
}

\renewcommand{\r}{]\!]}
\newcommand{\x}{\mathbb X}
\newcommand{\y}{\mathbb Y}
\renewcommand{\u}{\mathbb U}

\title{Deck transformations of developable complexes of groups}

\author[1]{Alexander Nath\,\orcidlink{0000-0002-9341-9948}}

\affil[1]{Department of Mathematics, Kiel University\\
Heinrich-Hecht-Platz 6, 24118 Kiel, Germany\\
\href{mailto:nath@math.uni-kiel.de}{nath@math.uni-kiel.de}}

\date{\today}

\MSC{20F65, 57M07}
\Keywords{complexes of groups, deck transformations, development, effective quotient}

\begin{document}
\thispagestyle{empty}
\maketitle
\thispagestyle{empty}

\begin{abstract}
We introduce the concept of deck transformations within the category of developable complexes of groups. Drawing inspiration from classical covering theory for topological spaces, we propose an alternative construction of the universal development of a developable complex of groups, formulated in terms of equivalence classes of paths. This framework allows us to provide a natural characterization of the group of deck transformations.
\end{abstract}

\printMSCandKeywords

\section{Introduction}
A classical result from algebraic topology states that for a covering $f : S \to T$ of topological spaces $S,T$ the group of deck transformations $\hbox{Deck}(f)$ is isomorphic to the quotient $N_G(U)/U,$ where $G$ is the fundamental group of $T$, $U$ is the characteristic subgroup of the covering, and $N_G(U)$ denotes its normalizer in $G$. In his doctoral thesis \cite{henack2018separability}, Henack  defined and investigated deck transformations in the category of graphs of groups. Building on the construction of the Bass–Serre tree for a given graph of groups $\mathbb A$ via equivalence classes of $\mathbb A$-paths, as implemented by Kapovich, Weidmann, and Myasnikov \cite{kapovich2005foldings} in their work on folding algorithms for graphs of groups, Henack established an analogue of the classical topological result in this setting \cite[Theorem 3.59]{henack2018separability}. The aim of the present paper is to further generalize these constructions and results. We give a precise definition of deck transformations in the category of developable complexes of groups and, building up on Henack's work, prove a group-theoretic characterization of the group of deck transformations associated to a covering of developable complexes of groups \cite[Chap.~III.$\mathcal C$ 5.1]{bridson2013metric}, which extends Henack's result to higher dimensions.

\smallskip

 To this end, we first provide an alternative construction of the universal development of a given developable complex of groups $\x$. Fixing a base vertex $\sigma_0 \in \x$, we explicitly construct in Section~\ref{section3} a simply connected small category without loops (scwol) $\tilde X_{\sigma_0}$, \textit{the universal complex}, whose elements are represented by equivalence classes of $\x$-paths. The fundamental group $\pi_1(\x,\sigma_0)$ acts on this simply connected scwol inducing the original complex of groups $\x$ (up to isomorphism). Our construction parallels that of the Bass–Serre tree in \cite{kapovich2005foldings}, and differs from the basic construction of the universal development $D(X, \iota_T)$ in \cite[Chapter~III.$\mathcal C$ 2.13]{bridson2013metric}, that relies on the choice of a maximal tree $T$. In contrast, our construction is a more direct analogue to the topological setting, as it only requires a single distinguished base vertex. This viewpoint allows us to adapt several arguments from classical covering space theory to the framework of developable complexes of groups. 
 
 \smallskip

In Section~\ref{section4}, we use the language developed in Section~\ref{section3} to characterize homotopic morphisms of developable complexes of groups in terms of the induced maps on the level of fundamental groups and universal complexes with respect to a chosen base point.
 
\smallskip

Finally, in Section~\ref{section5}, we define the group of deck transformations $\hbox{Deck}(\phi)$ of a covering $\phi : \x \to \y$ of developable complexes of groups in terms of homotopy classes of automorphisms of $\x$. We use the construction from Section~\ref{section3} and the results from Section~\ref{section4} to derive a group-theoretic characterization of $\hbox{Deck}(\phi)$ that generalizes Henack's theorem from the one-dimensional case to the higher-dimensional setting.
\begin{thm}[Main Theorem]
    \label{mainthm}
     Let $\phi: (\x,\sigma_0) \to (\y,\tau_0)$ be a covering of developable complexes of groups over a morphism $f:X \to Y$. Let $G := \pi_1(\y,\tau_0)$ and $U := \phi_\ast(\pi_1(\x,\sigma_0)) \leq G$. Furthermore, let $K:=\ker(\pi_1(\y,\tau_0) \curvearrowright\tilde Y_{\tau_0})$ and $C := C_G(U) \cap K$. Then $$\hbox{Deck}(\phi) \cong \faktor{N_G(U)}{C \cdot U},$$
     where $N_G(U)$ is the normalizer of $U$ in $G$ and $C_G(U)$ is the centralizer of $U$ in $G$.
\end{thm}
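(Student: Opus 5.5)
We mimic the classical proof of $\hbox{Deck}(f)\cong N_G(U)/U$, working with the universal complexes of Section~\ref{section3} and the homotopy criteria of Section~\ref{section4}. Recall that $\hbox{Deck}(\phi)$ consists of homotopy classes of automorphisms $\alpha$ of $\x$ with $\phi\circ\alpha \simeq \phi$.

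\emph{Step 1: identify the universal complexes.} Since $\phi$ is a covering, it induces an equivariant isomorphism of universal complexes $\tilde\phi:\tilde X_{\sigma_0}\to\tilde Y_{\tau_0}$, sending the class of an $\x$-path to the class of its image $\y$-path. This isomorphism intertwines the action of $\pi_1(\x,\sigma_0)$ with the action of $U$ via $\phi_\ast$. Consequently $\x$ is isomorphic, as a complex of groups, to the quotient complex of groups associated to $U\curvearrowright \tilde Y_{\tau_0}$.

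\emph{Step 2: a map $N_G(U)\to \hbox{Deck}(\phi)$.} Fix $g\in N_G(U)$. Left multiplication by $g$ on $\tilde Y_{\tau_0}$ is $U$-equivariantly twisted, since $g(uy)=(gug^{-1})(gy)$ and $gug^{-1}\in U$. It therefore descends to an automorphism $\alpha_g$ of the complex of groups associated to $U\curvearrowright\tilde Y_{\tau_0}$, and hence, by Step~1, to an automorphism of $\x$. It covers $\phi$ up to homotopy: the induced pair on the $\y$ level is (conjugation by $g$, multiplication by $g$) on $(G,\tilde Y_{\tau_0})$. By the characterization of homotopic morphisms in Section~\ref{section4}, this pair is homotopic to the identity, because conjugation by $g$ together with translation by $g$ is exactly the effect of changing the base path. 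One then checks that $g\mapsto[\alpha_g]$ is a homomorphism.

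\emph{Step 3: surjectivity.} Let $\alpha$ be a deck transformation. Choose an $\x$-path from $\sigma_0$ to $\alpha(\sigma_0)$. This yields a lift $\tilde\alpha$ of $\alpha$ to $\tilde X_{\sigma_0}\cong\tilde Y_{\tau_0}$ together with an induced map on $\pi_1(\x,\sigma_0)$. The homotopy $\phi\alpha\simeq\phi$ and the Section~\ref{section4} criterion (homotopic morphisms induce maps on the universal complex that differ by an element of $G$, with the corresponding conjugation on the fundamental group) give $g\in G$ such that $\tilde\alpha$ is multiplication by $g$ and $\alpha_\ast$ is conjugation by $g$. Preservation of $U$ then forces $g\in N_G(U)$, and $[\alpha]=[\alpha_g]$.

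\emph{Step 4: the kernel.} We need $[\alpha_g]=[\mathrm{id}]$ if and only if $g\in C\cdot U$. Again using Section~\ref{section4}, $\alpha_g\simeq\mathrm{id}_\x$ holds if and only if there is $u\in U$ (a change of base loop in $\x$) such that $ug$ acts trivially on $\tilde Y_{\tau_0}$ and conjugation by $ug$ is trivial on $U$. This says precisely that $ug\in K\cap C_G(U)=C$, so $g\in U C=CU$; note that $C$ is normalized by $U$, since $K$ is normal and $U$ normalizes $C_G(U)$. Conversely, elements of $C\cdot U$ give maps homotopic to the identity.

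The main obstacle is Step~4, together with the matching part of Step~3. Here one must translate the Section~\ref{section4} characterization of homotopy of morphisms precisely into conditions on pairs (group map, map of universal complexes). The non-faithfulness of the action, measured by $K$, is exactly why $C$ appears rather than only the classical $U$. I would first prove a lemma: two morphisms $\x\to\x$ inducing the same map on $\tilde X_{\sigma_0}$ are homotopic if and only if their induced maps on $\pi_1$ differ by conjugation by an element of $U$. Step~4 would then be deduced from this lemma.
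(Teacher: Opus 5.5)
Your overall strategy works, and $g\mapsto[\alpha_g]$ is exactly the paper's $\varepsilon$. To see this, write $g=[\mathfrak p_\phi(p)\star(k)]$ with $k\in G_{\tau_0}$ and $\sigma_1=t(p)$, as in Lemma~\ref{lemmafornormalizer}. The automorphism $\eta^k_p$ of Definition~\ref{defiepsilon} is built in Lemma~\ref{twocoverings} from the equivariant isomorphism $\tilde\phi_{\sigma_1}^{-1}\circ(k\cdot\tilde\phi_{\sigma_0})$, where $\tilde\phi_{\sigma_i}$ is the lift of $\phi$ based at $\sigma_i$. Transported to $\tilde Y_{\tau_0}$ along $p$, this is left multiplication by $g$. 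Your Steps~3 and~4 are the paper's surjectivity and kernel arguments.

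The real difference is Step~2. You get well-definedness and multiplicativity from functoriality, up to homotopy, of the construction in \cite[Chap.~III.$\mathcal C$~2.9]{bridson2013metric}. The paper instead uses an explicit path computation with correction terms $z_i\in C$. The price of your route is importing that functoriality and checking that, under your Step~1 identification, $\phi$ is homotopic to the morphism induced by $(U\hookrightarrow G,\mathrm{id})$. In that framework, $\phi\circ\alpha_g\sim\phi$ holds because $(c_g,L_g)$ on $G\curvearrowright\tilde Y_{\tau_0}$ induces exactly $\mathrm{id}_{\mathbb Y}$ when all correcting elements are chosen to be $g^{-1}$.

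Your appeal to Section~\ref{section4} for this is imprecise as written. Proposition~\ref{characthomotopy} and Corollary~\ref{samemaponcover} only handle elements of the local group $G_{\tau_0}$. You would have to route through the decomposition $g=[\mathfrak p_\phi(p)\star(k)]$ and feed $k$, not $g$, into Corollary~\ref{samemaponcover}.

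The genuine gap is the auxiliary lemma in your last paragraph, from which you plan to deduce Step~4: it is false as stated. By Proposition~\ref{characthomotopy}, a homotopy is governed by one element $k_{\sigma_0}$ of a local group. That single element must produce the translation on the universal complex and the conjugation on $\pi_1$ \emph{simultaneously}. So two morphisms with the same lift are homotopic iff their maps on $\pi_1$ differ by conjugation by an element acting trivially on the universal complex, not by an arbitrary element of $U$.

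Here is a counterexample, with $\phi=\mathrm{id}_{\mathbb X}$ so that $U=\pi_1(\mathbb X,v)$.
\begin{itemize}
\item Take the scwol with vertices $v,e$ and edges $a,b\colon e\to v$. Put $\mathbb Z/3$ at both vertices, with $\psi_a=\mathrm{id}$ and $\psi_b$ inversion.
\item Then $\pi_1(\mathbb X,v)=\langle s,t\mid s^3,\ tst^{-1}=s^{-1}\rangle$ acts on a line with kernel $\langle s\rangle$.
\item Let $\beta$ be the automorphism over $\mathrm{id}_X$ that inverts both local groups and has trivial edge elements.
\item Then $\beta_\ast=c_t$, and $\tilde\beta=\mathrm{id}$ because $c_t(\gamma)\gamma^{-1}\in\langle s\rangle$ acts trivially.
\item Yet $\beta\not\sim\mathrm{id}_{\mathbb X}$, since inversion on the abelian group $G_v$ is not inner.
\end{itemize}

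Applied to $\ker\varepsilon\subseteq C\cdot U$, your lemma decouples the two conditions. It only yields $u_1g\in K$ and $u_2g\in C_G(U)$ for possibly different $u_1,u_2\in U$, i.e.\ $g\in UK\cap U\,C_G(U)$. That does not give $g\in U(K\cap C_G(U))$.

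The criterion you actually state in Step~4, with the \emph{same} $u$ in both conditions, is the right one. Prove it directly from Proposition~\ref{characthomotopy} and Corollary~\ref{samemaponcover}; this is exactly Proposition~\ref{mainlemma1}~(ii) of the paper.
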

Our proof of the Main Theorem is constructive: in Section~\ref{section5}, we explicitly construct a map 
$$
\varepsilon :  N_G(U)\to \hbox{Deck}(\phi)
$$ and prove that it is an epimorphism with kernel $C\cdot U$.
If we additionally assume $\y$ to be effective, i.e., the fundamental group $\pi_1(\y,\tau_0)$ acts on $\tilde Y_{\tau_0}$ with trivial kernel, we recover the classical result from topology.
\begin{cor}
\label{maincor}
    Let $\phi: (\x,\sigma_0) \to (\y,\tau_0)$ be a covering of developable complexes of groups and let $\y$ be effective. Let $G:=\pi_1(\y,\tau_0)$ and $U := \phi_\ast(\pi_1(\x,\sigma_0)) \leq G$. Then $$\hbox{Deck}(\phi) \cong \faktor{N_G(U)}{U}.$$
\end{cor}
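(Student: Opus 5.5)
The corollary follows from the Main Theorem~\ref{mainthm} by showing that effectiveness forces the subgroup $C$ to be trivial. We apply Theorem~\ref{mainthm} to the covering $\phi:(\x,\sigma_0)\to(\y,\tau_0)$ over its underlying morphism $f:X\to Y$, with the same $G=\pi_1(\y,\tau_0)$ and $U=\phi_\ast(\pi_1(\x,\sigma_0))$. This gives
$$\hbox{Deck}(\phi)\cong \faktor{N_G(U)}{C\cdot U},\qquad C=C_G(U)\cap K,\quad K=\ker\bigl(\pi_1(\y,\tau_0)\curvearrowright \tilde Y_{\tau_0}\bigr).$$

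By definition, $\y$ effective means that the action of $\pi_1(\y,\tau_0)$ on the universal complex $\tilde Y_{\tau_0}$ has trivial kernel, so $K=\{1\}$. The paper works with the universal development $\tilde Y_{\tau_0}$ built from $\x$-paths in Section~\ref{section3}, and this is isomorphic to the usual universal development. So if effectiveness was originally phrased via $D(Y,\iota_T)$, I would first note that the two actions are equivariantly isomorphic, hence have the same kernel. Consequently $C=C_G(U)\cap\{1\}=\{1\}$, and therefore $C\cdot U=U$.

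Substituting gives $\hbox{Deck}(\phi)\cong N_G(U)/U$. The quotient is well defined because $U\trianglelefteq N_G(U)$, which also agrees with the fact that $C\cdot U$ is normal in $N_G(U)$ in the theorem. The only point needing any care is the identification of the two universal developments, needed so that "effective" means exactly $K=1$. This should follow from the construction in Section~\ref{section3}, which shows that the $\pi_1$-action on $\tilde Y_{\tau_0}$ recovers $\y$ up to isomorphism.
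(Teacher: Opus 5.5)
Your proof is correct and is essentially the paper's own argument: the paper defines $\y$ to be effective precisely by $K=\ker\bigl(\pi_1(\y,\tau_0)\curvearrowright\tilde Y_{\tau_0}\bigr)=\{1\}$, so $C=C_G(U)\cap K$ is trivial and Theorem~\ref{mainthm} reduces to $N_G(U)/U$. Your detour through $D(Y,\iota_T)$ is not needed, since effectiveness is phrased directly via $\tilde Y_{\tau_0}$. In any case, Remark~\ref{inlinewithbridsonhaefliger} supplies a $\kappa_T$-equivariant isomorphism, under which the two kernels correspond.
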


\section{Preliminaries}
In this section, we recall the basic definitions concerning scwols and complexes of group and introduce the notations which will be used throughout the remainder of the paper. For a more comprehensive introduction to the topic, the reader is referred to \cite[Chap. III~$\mathcal C$]{bridson2013metric}. Furthermore, we assume that the reader is fluent in Bass-Serre theory and the language of graphs of groups (e.g., as in \cite{delgado2025pullbacks}, \cite{kapovich2005foldings}, \cite{serre2002trees}).

\smallskip

We start by defining a complex of groups. The idea is to extend the notion of a graph of groups which essentially encodes an action of a group $G$ on a tree $T$ to higher-dimensional cell complexes. As with a graph of groups, we have a combinatorial object on which we define a certain marking by groups and monomorphisms. The combinatorial object for this matter are small categories without loops (scwols) which emerged as the standard object in the literature on complexes of groups (e.g., \cite{bridson2013metric}, \cite{martin2016complexes}, \cite{lim2008covering}). For simplicity, a scwol can be thought of as a directed graph $X$ that is associated to a polyhedral cell complex $\Sigma$ such that the set of vertices of $X$ is the set of barycenters of cells of $\Sigma$ and a directed edge $a \in EX$ issues from a vertex $i(a) = \sigma \in VX$ and terminates in a vertex $t(a) = \tau \in VX$ if and only if the cell of $\Sigma$ corresponding to $\tau$ is contained in the boundary of the cell of $\Sigma$ corresponding to $\sigma$.  

\smallskip

 As in \cite[Chap.~III.$\mathcal C$ 1.1]{bridson2013metric} a scwol~$X$ is formally defined as a tuple $$(VX, EX, i, t, \cdot),$$ where $V X$ is the set of vertices, whose elements will be denoted by Greek letters, $E X$ is the set of edges, whose elements will be denoted by Latin letters, $i,t : EX \to VX$ are maps which assign to an edge $a \in EX$ its initial vertex $i(a)$ and its terminal vertex $t(a)$. Moreover, 
$$
\cdot : E^2X := \left \{(a,b) \in (EX)^2 : i(a) = t(b) \right \} \to EX, (a,b) \mapsto a \cdot b =: ab
$$
is a map such that the following are satisfied:
\begin{aufzii}
    \item For all $(a,b) \in E^2X$ we have $i(ab) = i(b)$ and $t(ab) = t(a)$.
    \item For all $a,b,c \in EX$ such that $i(a) = t(b)$ and $i(b) = t(c)$ we have $(ab)c = a(bc)$.
    \item For all $a \in EX$ we have $i(a) \neq t(a)$.
\end{aufzii}
Condition (iii) is commonly referred to as the \textit{no loops condition}.
We define the \textit{$n$-skeleton} $E^nX$ of $X$ as the set consisting of $n$-tuples of edges $(e_1, \dots, e_n)$ such that $i(e_{j}) = t(e_{j+1})$ for all $1 \leq j \leq n-1$, i.e., 
$$
E^nX := \{(e_1, \dots, e_n ): (e_i,e_{i+1}) \in E^2X,  \text{ for all }1 \leq i \leq n-1 \}.
$$
Given (ii) above, there exists an edge $e_1 \cdots e_n \in EX$ for all $(e_1, \dots, e_n) \in E^nX$, thereby extending the map $\cdot$ to the higher-order skeletons. To a scwol $X$ we can associate a \textit{geometric realization} $\lvert X \rvert$ which consists of $n$-simplices indexed by the elements of $E^nX$ along with induced identifications of their boundaries. If $X$ is associated to $\Sigma$ as above, we essentially retain $\lvert X \rvert \cong \Sigma'$ where $\Sigma'$ is the first barycentric subdivision of $\Sigma$. Given two scwols $X$ and $Y$ we say that a pair of maps $f = (f_V,f_E) : X \to Y$ is a \textit{morphism of scwols} if it maps vertices to vertices and edges to edges such that 
\begin{aufzii}
    \item $f$ commutes with the maps $i$ and $t$, i.e., $f_V(i(a)) = i(f_E(a))$ for all $a \in EX$,
    \item $f$ satisfies $f_E(ab) = f_E(a)f_E(b)$ for all pairs $(a,b) \in E^2X$, and
    \item $f_E$ restricted onto the set $\{a \in EX : i(a) = \sigma\}$ is a bijection to the set $\{a' \in EY : i(a') = f(\sigma)\}$ for all vertices $\sigma \in VX$.
\end{aufzii}
 We will usually just write $f$ instead of $f_V$ and $f_E$. Given (iii), in the case of polyhedral complexes one can think of a cellular map. Sometimes the definition above is referred to as a \textit{non-degenerated morphism of scwols }(e.g. in \cite[Chap.~III.$\mathcal C$]{bridson2013metric} which will be our case of interest. A bijective morphism $f : X \to X$ is called an automorphism of $X$. We denote by $\hbox{Aut}(X)$ the group of automorphisms of $X$. Let $G$ be a group, a homomorphism $\rho:G \to \hbox{Aut}(X)$ is called a group action if 
\begin{aufzii}
    \item $\rho(g)(i(a)) \neq t(a)$ for all $a \in EX, g \in G$ and
    \item if $\rho(g)(i(a)) = i(a)$ for some $a \in EX, g \in G$, then $\rho(g)(a) = a$.
\end{aufzii} Given an action $\rho$ of a group $G$ on a scwol $X$ we will use the shorthand notation $G \curvearrowright X$ and usually write $g \cdot \alpha$ instead of $\rho(g)(\alpha)$ for $g \in G, \alpha \in VX \cup EX$.

\smallskip

A comprehensive account of the covering theory for scwols can be found in \cite[Chap.~III.$\mathcal C$, Section~1]{bridson2013metric}. Since we will not require the full technical machinery of this theory, we refer to the relevant definitions from Bridson and Haefliger only when needed. For intuition, one may keep in mind the covering theory of polyhedral complexes.

\smallskip
Let $X$ be a scwol, a complex of groups $\x$ over $X$ is a tuple
$$\x := \bigl ( (G_\sigma)_{\sigma \in VX}, (\psi_a)_{a \in EX}, (g_{a,b})_{(a,b) \in E^2X}\bigr ),$$
where $(G_\sigma)_{\sigma \in VX}$ is a family of groups, \textit{the local groups of $\x$}, $(\psi_a)_{a \in EX}$ is a family of monomorphisms $\psi_a : G_{i(a)} \to G_{t(a)}$, \textit{the boundary monomorphisms of $\x$}, and $(g_{a,b})_{(a,b) \in E^2X}$ is a family of elements $g_{a,b} \in G_{t(a)}$ that satisfies
\begin{aufzii}
    \item  $c_{g_{a,b}} \circ \psi_{ab} = \psi_a \circ \psi_b$ for all $(a,b) \in E^2X$, where we denote with $c_h: g \mapsto hgh^{-1}$ the conjugation homomorphism and
    \item $\psi_a(g_{b,c})g_{a,bc} = g_{a,b}g_{ab,c}$ for all $(a,b,c) \in E^3X$.
\end{aufzii}
The elements $g_{a,b}$ are usually referred to as the \textit{twisting elements of $\x$}. Unless otherwise stated, we always assume that the scwols underlying the complexes of groups are connected.

\smallskip

As in Bass-Serre theory, we can associate an action of a group $G$ on a cell complex $\tilde \Sigma$ to a complex of groups $\x$ over the scwol $X =\tilde \Sigma /G$ such that the local groups are conjugates of isotropy subgroups of cells of $\tilde \Sigma$. The boundary monomorphisms are given by conjugation. However, unlike in Bass-Serre theory, the converse is not true in general. There are complexes of groups that do not arise in this fashion, i.e., do not stem from a group action on a scwol or polyhedral cell complex. Complexes of groups that can be constructed from such an action are refered to as \textit{developable complexes of groups}.  A very intuitive example for the non-developable case, based on bad orbifolds, was given by Heafliger \cite[Example 2.3 b)]{haefliger1992extension}.

\smallskip

 Given two complexes of groups $\x$ and $\y$ a \textit{morphism of complexes of groups} $\phi : \x \to \y$ is defined as a tuple $$\phi = \bigl (f,(\phi_\sigma)_{\sigma \in VX},(\phi(a))_{a \in EX} \bigr )$$ where $f : X \to Y$ is a morphism of the underlying scwols, $\phi_\sigma : G_\sigma \to G_{f(\sigma)}$ is a homomorphism for all $\sigma \in VX$, and $\phi(a) \in G_{f(t(a))}$ for all $a \in EX$ are elements of the local groups of $\y$ such that 
\begin{aufzii}
    \item $c_{\phi(a)} \circ \psi_{f(a)} \circ \phi_{i(a)} = \phi_{t(a)} \circ \psi_a$ for all $a \in EX$ and
    \item $\phi_{t(a)}(g_{a,b})\phi(ab) = \phi(a)\psi_{f(a)}(\phi(b))g_{f(a),f(b)}$ for all $(a,b) \in E^2X$.
\end{aufzii}
For brevity we will often say that $\phi$ is a morphism over $f$. We call the homomorphisms $\phi_\sigma$ \textit{local homomorphisms} and will usually refer to the elements $\phi(a)$ as the \textit{edge elements} of $\phi$.
As an edge of a graph in Bass-Serre theory corresponds to two edges in the associated scwol, the elements $\phi(a)$ correspond to the elements $\gamma_{i(e)}^{-1}\gamma_e$ and $\gamma_{t(e)}^{-1}\gamma_{e^{-1}}$ from \cite[section 2]{bass1993covering} and $f_\alpha$ and $f_\omega$ from \cite[section 3]{kapovich2005foldings}.  By $\hbox{Morph}(\x,\y)$ we denote the set of morphisms $\phi : \x \to \y$. Let $\phi = (f,(\phi_\sigma)_{\sigma \in VX},(\phi(a))_{a \in EX}) \in \hbox{Morph}(\x,\y).$ Then $\phi$ is called an isomorphism if $\phi_\sigma$ is an isomorphism for all $\sigma \in VX$ and $f$ is an isomorphism of underlying scwols. Moreover, we define $\hbox{Aut}(\x) := \{\phi \in \hbox{Morph}(\x,\x):\phi \text{ is an isomorphism}\}$.

\smallskip

Given two morphisms 
$$
\phi = \bigl (f,(\phi_\sigma)_{\sigma \in VU},(\phi(a))_{a \in EU} \bigr ) \in \hbox{Morph}(\u,\x)
$$
and 
$$
\eta = \bigl (g,(\eta_\tau)_{\tau \in VX},(\eta(b))_{b \in EX} \bigr ) \in \hbox{Morph}(\x,\y)
$$
of complexes of groups $\u,\x,$ and $\y$, their \textit{composition} is defined by the following data:
\begin{aufzii}
    \item The underlying morphism is $g \circ f$.
    \item The local homomorphisms are defined as $(\eta \circ \phi)_\sigma := \eta_{f(\sigma)}\circ \phi_\sigma$ for all $\sigma \in VU$.
    \item The edge elements are defined as $(\eta \circ \phi)(a) := \eta_{f(t(a))}(\phi(a))\eta(f(a))$ for all $a \in EU$.
\end{aufzii}
It is a straightforward calculation to show that this data defines a morphism $\eta \circ  \phi \in \hbox{Morph}(\u,\y)$.
\smallskip 

We conclude this introductury section with the definition of a \textit{covering of complexes of groups}. For a more in depth discussion of covering morphisms in this category, the reader is referred to \cite[Chap.~III.$\mathcal C$, 5]{bridson2013metric} or \cite{lim2008covering}. Let $\x$ and $\y$ be two complexes of groups. A morphism $\phi : \x \to \y$ over a surjective morphism of scwols $f : X \to Y$ is called a covering of complexes of groups if it satisfies the following for all $\sigma \in VX$: 
\begin{aufzii}
    \item the local homomorphism $\phi_\sigma: G_\sigma \to G_{f(\sigma)}$ is injective
    \item for all $a' \in EY$ with $t(a') = f(\sigma)$ the map 
$$\coprod_{a \in f^{-1}(a'), t(a) = \sigma} \faktor{G_\sigma}{\psi_a(G_{i(a)})} \to \faktor{G_{f(\sigma)}}{\psi_{a'}(G_{i(a')})}, \quad g \cdot \psi_a(G_{i(a)}) \mapsto \phi_\sigma(g)\phi(a)\cdot\psi_{a'}(G_{i(a')})$$ is a bijection.
\end{aufzii}

\smallskip

\section{The Universal Complex}
\label{section3}
In this section we present an alternative way to construct a universal covering scwol associated to a developable complex of groups $\mathbb X$ over a scwol $X$. Given a base vertex $\sigma_0 \in VX$ we construct a simply connected scwol $\tilde X_{\sigma_0}$, the universal complex of $\x$ with respect to $\sigma_0$, equipped with a $\pi_1(\mathbb X,\sigma_0)$-action such that $\mathbb X$ is canonically isomorphic to the complex of groups associated to this action. We describe the universal complex by certain equivalence classes of $\x$-paths akin to the construction of universal covers for topological spaces and to the construction of the Bass-Serre trees for graphs of groups as described in \cite{kapovich2005foldings}. This point of view will allow us to derive elegant characterizations of homotopic morphisms of complexes of groups in Section~\ref{section4}, and to develop the theory of deck transformations in the category of developable complexes of groups in parallel with that of topological spaces in Section~\ref{section5}.

\smallskip

For the remainder of this section let $\mathbb X = ((G_\sigma)_{\sigma \in VX}, (\psi_a)_{a \in EX},(g_{a,b})_{(a,b) \in E^2X})$ be a developable complex of groups over a connected scwol $X$.
\begin{defi}[Paths in a scwol]
\label{Paths in a scwol}
Let $X$ be a scwol and let $E^{-1}X$ denote the set of formal symbols $a^{-1}$ for $a \in EX$. We view these symbols as the set of \textit{inverse edges} of $X$. Accordingly, for all $a \in EX$ we define $i(a^{-1}) := t(a)$ and $t(a^{-1}) := i(a)$. We denote by $E^\pm X := EX \sqcup E^{-1}X$ the set of oriented edges of $X$ equipped with an involution 
$$(\cdot)^{-1} : E^\pm X \to E^\pm X, \qquad
a \mapsto a^{-1}, \quad a^{-1} \mapsto a
\qquad (a \in EX)$$ A path in $X$ of length $n$ is a tuple $(e_1, \dots, e_n) \in (E^\pm X)^n$ such that 
$t(e_i) = i(e_{i+1})$ for all $1 \le i \le n-1$. We denote by $i(p) := i(e_1)$ and $t(p) := t(e_k)$ its initial and terminal vertices. Moreover, we say that a scwol $X$ is connected if for any two vertices $\sigma, \tau \in VX$ there exists a path $p$ of finite length such that $i(p) = \sigma$ and $t(p) = \tau$.
\end{defi}

\begin{rem}
Note that our notation differs slightly from that used by Bridson and Haefliger in \cite[Chap. III.$\mathcal C$, 1.6]{bridson2013metric}. The authors denote by $a^+$ the formal symbol corresponding to an element $a^{-1} \in E^{-1}X$, and by $a^-$ an element $a \in EX$. We adopt a different notational convention in order to align more closely with the standard notation used for graphs of groups and the construction of a Bass-Serre tree by equivalence classes of paths (see, for instance, \cite{kapovich2005foldings} or \cite{henack2018separability}).
\end{rem}

\begin{defi}[\texorpdfstring{$\x$-path \cite[Chap.~III.$\mathcal C$~3.3]{bridson2013metric}}{}]
\label{xpaths}
A tuple $p = (g_0,e_1,g_1,\dots,e_k,g_k)$ such that $(e_1,\dots,e_k)$ is a path in $X$, with $e_j \in E^\pm X$ for all $1 \le j \le k$, and such that $g_0 \in G_{i(e_1)}$ and $g_j \in G_{t(e_j)}$ for all $1 \le j \le k$, is called an \textit{$\x$-path}.

We denote the initial and terminal vertices of $p$ by $i(p) := i(e_1)$ and $t(p) := t(e_k)$. An $\x$-path $p$ with $i(p) = t(p)$ is called an \textit{$\x$-loop}.

Given two $\x$-paths 
$p = (g_0,e_1,g_1,\dots, g_{k-1},e_k,g_k)$ and 
$q = (g'_0,e'_1,g'_1,\dots,g'_{l-1},e'_l,g'_l)$ 
such that $t(p) = i(q)$, we define their \textit{concatenation} as the $\x$-path
$$
p \star q := (g_0,e_1,g_1,\dots, g_{k-1},e_k,g_k g'_0,e'_1,g'_1,\dots ,g'_{l-1},e'_l,g'_l).
$$

We denote by $\mathcal P(\x)$ the set of all $\x$-paths. If the initial and terminal vertices are fixed, we denote by $\mathcal P_\sigma^\tau(\x)$ the set of $\x$-paths issuing from $\sigma \in VX$ and terminating at $\tau \in VX$. If only the initial (respectively terminal) vertex is fixed, we write $\mathcal P_\sigma(\x)$ (respectively $\mathcal P^\tau(\x)$) for the set of $\x$-paths issuing from $\sigma$ (respectively terminating at $\tau$). If an $\x$-path $p$ admits a decomposition $p = p_1 \star \cdots \star p_n,$ we call the $\x$-paths $p_i$ ($1 \le i \le n$) \emph{$\x$-subpaths} of $p$.

Finally, to an $\x$-path 
$p = (g_0,e_1,g_1,\dots ,g_{n-1},e_n,g_n) \in \mathcal P_\sigma^\tau(\x)$ 
we associate its \textit{inverse}
$$
p^{-1} := (g_n^{-1},e_n^{-1}, g_{n-1}^{-1},\dots,g_1^{-1},e_1^{-1},g_0^{-1}) \in \mathcal P_\tau^\sigma(\x).
$$
\end{defi}

\begin{defi}[Homotopy of $\x$-paths]
\label{defihomotopypaths}
We call two $\x$-paths 
$$
p = (g_0,e_1,\dots,e_k,g_k) \quad \text{and} \quad p' = (g'_0,e'_1,\dots,e'_l,g'_l)
$$
\textit{elemantarily equivalent} and write $p    \ \mathring \sim \ p'$ if $p'$ can be obtained from $p$ by one of the following moves (or their inverses):
\begin{aufzii}
    \item[(Ia):] Replace an $\x$-subpath $(kg,a,h)$ with $(k,a,\psi_a(g)h)$, where $a \in EX, k,g \in G_{i(a)},$ and $h \in G_{t(a)}$.
    \item[(Ib):] Replace an $\x$-subpath $(g,a^{-1},hk)$ with $(g\psi_a(h),a^{-1},k)$ where $a \in EX, g \in G_{t(a)}, h,k \in G_{i(a)}$.
    \item[(IIa):] Replace an $\x$-subpath $(g,a,\psi_a(h),a^{-1},k)$ with $(ghk)$, where $a \in EX$ and $g,h,k \in G_{i(a)}$.
    \item[(IIb):] Replace an $\x$-subpath $(g,b^{-1},h,b,k)$ with $(g \psi_b(h)k)$, where $b \in EX,$ $g,k \in G_{t(b)},$ and $h \in G_{i(b)}$.
    \item[(IIIa):] Replace an $\x$-subpath $(g,b,1,a,k)$ with $(g,ab,g_{a,b}^{-1}k)$, where $(a,b) \in E^2X, g \in G_{i(b)},k \in G_{t(a)}$, and $g_{a,b} \in G_{t(a)}$ is the twisting element associated to $(a,b)$.
    \item[(IIIb):] Replace an $\x$-subpath $(g,a^{-1},1,b^{-1},k)$ with $(g g_{a,b},(ab)^{-1},k)$, where $(a,b) \in E^2X$, $g \in G_{t(a)}, k \in G_{i(b)}$, and $g_{a,b} \in G_{t(a)}$ is the twisting element associated to $(a,b)$.
\end{aufzii}
Moves of type~I are called \textit{elementary edge slides}, moves of type~II 
\textit{elementary reductions}, and moves of type~III \textit{elementary shortcuts}. 
Two $\x$-paths $p$ and $p'$ are called \textit{homotopic} if there exists a finite 
sequence of $\x$-paths
$$
p = p_0 \ \mathring{\sim}\ p_1 \ \mathring{\sim}\ \cdots \ \mathring{\sim}\ p_n = p'.
$$

Given two homotopic $\x$-paths $p$ and $p'$, we write $p \sim p'$ and denote by $[p]$ the homotopy class of $p$. By definition, two homotopic paths share the same initial and terminal vertices, and $[p] = [q]$ if and only if $[p^{-1}] = [q^{-1}]$. 

We denote by $[\mathcal P_\sigma^\tau(\x)]$ the set of homotopy classes of elements of $\mathcal P_\sigma^\tau(\x)$.
\end{defi}
\begin{rem}
A straightforward calculation shows that our definition of homotopy in Definition~\ref{defihomotopypaths} agrees with that of Bridson and Haefliger \cite[Chap.~III.$\mathcal C$~3.4]{bridson2013metric}, which is formulated using the free group $F\x$ over a given complex of groups (\cite[Chap.~III.$\mathcal C$~3.1]{bridson2013metric}). In Bass--Serre theory, the free group $F\mathbb A$ associated with a graph of groups $\mathbb A$ is sometimes referred to as the \emph{path group} $\pi(\mathbb A)$ \cite{bass1993covering}. Furthermore, in the case of graphs of groups, our definition agrees with the notions of elementary reductions and homotopy classes of $\mathbb A$-paths as described by Kapovich, Weidmann, and Myasnikov \cite[2.3]{kapovich2005foldings}.
\end{rem}

Given the definition of homotopy, we easily observe that concatenation of homotopy classes is a well-defined operation and can verify the following. 
\begin{lemdef}[\texorpdfstring{Fundamental group \cite[Chap.~III.$\mathcal C$~3.5]{bridson2013metric}})]
\label{defifundgroupcog}
    Let $\sigma_0 \in VX$ be a vertex of the underlying scwol $X$. The set $$\pi_1(\x, \sigma_0) := [\mathcal P_{\sigma_0}^{\sigma_0}(\x)] = \{[p]:\text{$p$ is an $\x$-loop at $\sigma_0$}\}$$ is a group with respect to $[p]\cdot[q] := [p \star q]$.

\end{lemdef}
If the complex of groups $\x$ is trivial, that is, if all local groups are trivial, then $\pi_1(\x,\sigma_0)$ coincides with the fundamental group $\pi_1(X,\sigma_0)$ of the underlying scwol with base vertex $\sigma_0 \in VX$ \cite[Chap.~III.$\mathcal C$, Section~3]{bridson2013metric}.
\begin{rem}
    Choose a maximal subtree $T \subset X$. For all vertices $\sigma \in VX$ let $\pi_\sigma = (e_1^\sigma, \dots, e_{n_\sigma}^\sigma)$ denote the unique path in $T$ with $i(\pi_\sigma) = \sigma_0$ and $t(\pi_\sigma) = \sigma$. Set $p_\sigma := (1,e_1^\sigma,1,\dots,1,e_{n_\sigma}^\sigma,1) \in \mathcal P_{\sigma_0}^\sigma(\x)$. Then, for all $\sigma \in VX$ there exists a homomorphism  $$\iota^T_\sigma : G_\sigma \to \pi_1(\x,\sigma_0), \qquad g \mapsto [p_\sigma \star(g) \star p_\sigma^{-1}].$$ Bridson and Heafliger \cite[Chap.~III.$\mathcal C$~3.9]{bridson2013metric} proved that $\x$ is developable if and only if each $\iota^T_\sigma$ is injective for some and therefore for any maximal subtree $T$.
\end{rem}

The construction of the universal complex is based on the following equivalence relation on $\x$-paths.
\begin{defi}[$\equiv$--equivalence of $\x$-paths]
\label{deficoverequivalencerelation}
    Let $\sigma_0$ be a fixed vertex of $X$. On $\mathcal P_{\sigma_0}(\x)$ we define an equivalence relation $\equiv$ via $p \equiv q$ if and only if $[q] = [p\star(g)]$ for some $g \in G_{t(p)}$. We denote by $[p\r$ the equivalence class of $p$ with respect to this relation. More precisely, $$[p \r = \{[p\star(g)] : g \in G_{t(p)}\}.$$
\end{defi}

\begin{rems}
\label{universalcomplexremark}
\begin{aufzii}
    \item Let $p \in \mathcal P_{\sigma_0}$. Given $a \in EX$ with $i(a) = t(p)$ we have that $[p \star (g,a,1)\r = [p \star (1,a,1)\r$ by an elementary edge slide.
    \item Suppose that $p = (g_1, e_1, \dots, e_n, g_n) \in \mathcal P_{\sigma_0}(\x)$ such that $(e_m, e_{m-1}, \dots,e_{l+1}, e_l) \in E^{m-l}X$ with $1 \leq l < m \leq n$. Then one can use elementary edge slides to deduce the existence of an element $g_m' \in G_{t(e_m)}$ such that 
    $$
    [p\r = [(g_1, e_1, \dots,1, e_l, 1, e_{l+1},1, \dots, 1,e_{m-1},1, e_m, g_m'\cdot g_m,e_{m+1}, \dots, e_n, g_n)\r.
    $$
    We can now apply further elementary shortcuts to exchange the subpath $(1,e_l,1,\dots,1,e_m,g_m'\cdot g_m)$ with $(1,e_m \cdots e_l,g')$ for some $g' \in G_{t(e_m)}$. In particular, if $m = n$, we observe that $$[p\r = [g_0,e_1,g_1, \dots, e_{l-1},g_{l-1},e_m \cdots e_l,1)\r,$$ where $e_m \cdots e_l \in EX$. 
\end{aufzii}
\end{rems}
\begin{lemdef}[Universal complex]
\label{univcomplex}
Let $\sigma_0 \in VX$. We define
\begin{aufzii}
    \item $V\tilde X_{\sigma_0} := \left \{ [p \r : p \in \mathcal P _{\sigma_0}(\x) \right \}$, $\tilde \sigma_0 := [1_{\sigma_0}\r := [(1_{G_{\sigma_0}})\r$,
    \item $E\tilde X_{\sigma_0} := \{\left ([p\r,a,[p\star(1,a,1) \r \right ) : p \in \mathcal P _{\sigma_0}(\x) \text{ and } a \in EX, i(a) = t(p)\},$
    \item Maps $i,t : E\tilde X_{\sigma_0} \to V\tilde X_{\sigma_0}$ as the projections on the first and third component, respectively. 
    \end{aufzii}
    Then, $\cdot : E^2\tilde X_{\sigma_0} \to E \tilde X_{\sigma_0}$, $$\left ([p\star (1,b,1) \r,a,[p\star(1,b,1,a,1) \r \right ) \cdot \left ([p\r,b,[p\star(1,b,1) \r \right ) := ([p\r,ab,[p \star (1,ab,1)\r).$$ is a well defined map and
    $\tilde X_{\sigma_0} := (V\tilde X_{\sigma_0}, E\tilde X_{\sigma_0}, i ,t, \cdot )$ defines a connected scwol, which we will call the \textit{universal complex} of $\x$ with respect to $\sigma_0$.

Furthermore, the maps 
\begin{align*}
    \pi_1(\x,\sigma_0) \times V\tilde X_{\sigma_0} &\to  V\tilde X_{\sigma_0} , ([q],[p\r) \mapsto [q \star p \r \\
    \pi_1(\x,\sigma_0) \times E\tilde X_{\sigma_0} &\to  E\tilde X_{\sigma_0}, \bigl ([q], ([p\r,a,[p \star (1,a,1)\r) \bigr) \mapsto ([q \star p \r,a,[q \star p \star (1,a,1)\r)
\end{align*}
define a $\pi_1(\x, \sigma_0)$--action on $\tilde X_{\sigma_0}$.
\end{lemdef}
\begin{proof}
We can use the Remarks~\ref{universalcomplexremark} and observe that $\cdot$ is a well defined map. We need to check that it further satisfies the properties (i)-(iii) from the definition of a scwol. The conditions (i) and (iii) are immediate. To verify (ii), let $(a,b,c) \in E^3X$. Then
$$[(1,c,1)\star(1,ab,1)\r = [(1,abc,1)\r = [(1,bc,1)\star(1,a,1)\r.$$ It follows that for edges $\mathfrak a, \mathfrak b,$ and $\mathfrak c \in E\tilde X_{\sigma_0}$ such that $i(\mathfrak a) = t(\mathfrak b)$ and $i(\mathfrak b) = t(\mathfrak c)$ we have $(\mathfrak a \mathfrak b) \mathfrak c = \mathfrak a(\mathfrak b \mathfrak c)$. Hence, $(V\tilde X_{\sigma_0}, E\tilde X_{\sigma_0}, i ,t, \cdot )$ defines a scwol. To an edge $\mathfrak a =([p\r,a,[p \star (1,a,1)\r) \in E\tilde X_{\sigma_0}$ we associate its inverse $\mathfrak a^{-1} := ([p \star(1,a,1)\r, a^{-1},[p\r) \in E^{-1}\tilde X_{\sigma_0}$.

Let $p = (g_0,e_1,g_1, \dots, g_{n-1},e_n,g_n)\in \mathcal P_{\sigma_0}(\x)$ be an $\x$-path. Let $1 \leq i \leq n$ and set
$$
\mathfrak e_i := \left ([(g_0,e_1,g_1, \dots,g_{i-2},e_{i-1},g_{i-1})\r,a,[(g_0,e_1,g_1, \dots,g_{i-2},e_{i-1},g_{i-1})\star(1,a,g_i)\r \right) \in E\tilde X_{\sigma_0}$$
if $e_i = a \in EX$ or
$$
\mathfrak e_i := \left ([(g_0,e_1,g_1, \dots,g_{i-2},e_{i-1},g_{i-1}) \star (1,a^{-1},g_i)\r,a,[(g_0,e_1,g_1, \dots,g_{i-2},e_{i-1},g_{i-1})\r \right)^{-1}  \in E^{-1}\tilde X_{\sigma_0}
$$
if $e_i = a^{-1} \in E^{-1}X$. Then $\gamma = (\mathfrak e_1, \dots, \mathfrak e_n)$, is a path in $\tilde X_{\sigma_0}$ with $i(\gamma) = [1_{\sigma_0}\r$ and $t(\gamma) = [p\r$. Thus, $\tilde X_{\sigma_0}$ is connected.
\end{proof}

\begin{exa}
\begin{figure}[h]
\centering
\includegraphics[scale=1]{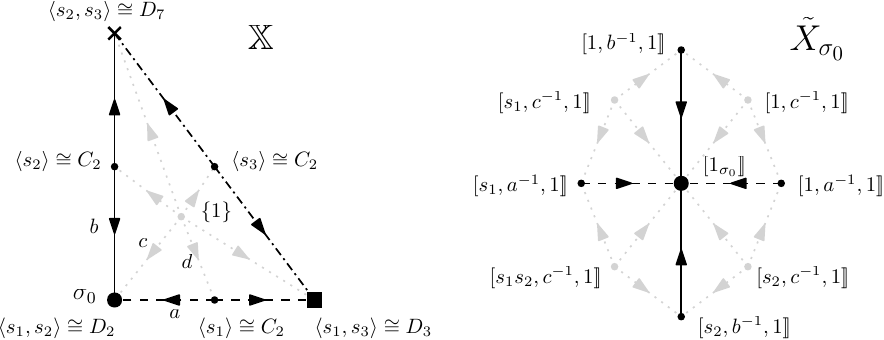}
\caption{Excerpt from the universal complex of Example \ref{example1}.}\label{Figure2}
\end{figure}
\label{example1}
We illustrate the construction with the following concrete example.
Consider the Coxeter group
$$
G = \langle s_1,s_2,s_3 \mid s_1^2 = s_2^2 = s_3^2 = (s_1s_2)^2 = (s_1s_3)^3 = (s_2s_3)^7 = 1 \rangle .
$$
$G$ can be realized as the fundamental group of a triangle of groups $\x$ with trivial group on the $2$-cell, cyclic groups of order $2$ on the $1$-cells, and dihedral groups of the corresponding orders at the vertices. The complex of groups $\x$ is shown in Figure~\ref{Figure2}.

Let $\sigma_0$, the vertex at the right angle, be the base vertex with respect to which we construct an excerpt of the universal complex $\tilde X_{\sigma_0}$. We obtain a total of eight edges pointing towards $[1_{\sigma_0}\r$: two corresponding to the left cosets of $\langle s_1\rangle$ in $\langle s_1,s_2\rangle$, and two corresponding to the left cosets of $\langle s_2\rangle$ in $\langle s_1,s_2\rangle$. The remaining four edges represent the left cosets of $\{1\}$ in $\langle s_1,s_2\rangle$ arising from the $2$-cell group. Note that the corresponding vertices are also interconnected. For example, consider the vertices $[s_1,a^{-1},1\r$ and $[s_1,c^{-1},1\r$. There is an edge projecting to $d$ that connects these vertices. More precisely, we observe that
$$
[s_1,c^{-1},1,d,1\r = [s_1,a^{-1},1\r .
$$
Note that we may assume all twisting elements to be trivial \cite[Chap.~III.$\mathcal C$,~2.3]{bridson2013metric}. The scwol $\tilde X_{\sigma_0}$ obtained in this way corresponds to the barycentric subdivision of the tessellation of the hyperbolic plane $\mathbb H^2$ by triangles with angles $\pi/2$, $\pi/3$, and $\pi/7$.
\end{exa}

\begin{prop}
\label{samescwol}
Let $\bar \x$ be the complex of groups that is induced by the action $\pi_1(\x,\sigma_0) \curvearrowright\tilde X_{\sigma_0}$ as in \cite[Chap.~III.$\mathcal C$ 2.9]{bridson2013metric}. Then there exists an isomorphism of complexes of groups $\phi : \x \to \bar \x$.
\end{prop}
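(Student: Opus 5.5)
The plan is to identify the quotient scwol $\pi_1(\x,\sigma_0)\backslash\tilde X_{\sigma_0}$ with $X$, compute the stabilizers of suitable lifts, and read off the local groups, boundary monomorphisms and twisting elements of $\bar\x$ from the construction in \cite[Chap.~III.$\mathcal C$ 2.9]{bridson2013metric}.

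\emph{Step 1: the quotient.} The projection $p:\tilde X_{\sigma_0}\to X$, $[p\r\mapsto t(p)$ and $([p\r,a,\cdot)\mapsto a$, is a $\pi_1(\x,\sigma_0)$-invariant morphism of scwols. To see that it induces an isomorphism on the quotient, take $[p\r,[p'\r$ with $t(p)=t(p')=\sigma$. Then $[p'\star p^{-1}]\in\pi_1(\x,\sigma_0)$ maps $[p\r$ to $[p'\star p^{-1}\star p\r=[p'\r$, so the fibres over vertices are single orbits. For edges over $a$, the same element works, since the edge is determined by its initial vertex and $a$. Hence $\pi_1(\x,\sigma_0)\backslash\tilde X_{\sigma_0}\cong X$.

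\emph{Step 2: choices and stabilizers.} For each $\sigma$ fix an $\x$-path $p_\sigma\in\mathcal P_{\sigma_0}^\sigma(\x)$; one could take the tree paths from the remark on $\iota^T_\sigma$, but any choice will do. Set $\tilde\sigma:=[p_\sigma\r$. An element $[q]$ fixes $\tilde\sigma$ exactly when $[q\star p_\sigma]=[p_\sigma\star(g)]$ for some $g\in G_\sigma$, that is, when $[q]=[p_\sigma\star(g)\star p_\sigma^{-1}]$. So the stabilizer is the image of $\iota_\sigma:G_\sigma\to\pi_1(\x,\sigma_0)$, $g\mapsto[p_\sigma\star(g)\star p_\sigma^{-1}]$. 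This map is injective because $\x$ is developable: it is conjugate to $\iota^T_\sigma$ by $[p_\sigma\star p^{T\,-1}_\sigma]$, and the Bridson--Haefliger criterion applies. We therefore define $\phi_\sigma:=\iota_\sigma:G_\sigma\to\bar G_\sigma=\mathrm{Stab}(\tilde\sigma)$, which is an isomorphism, with $f=\mathrm{id}_X$.

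\emph{Step 3: edge elements and compatibility.} In the construction of \cite[2.9]{bridson2013metric}, each $a\in EX$ has a lift $\tilde a$ with $i(\tilde a)=\tilde\sigma$, where $\sigma=i(a)$. Here $\tilde a=([p_{i(a)}\r,a,[p_{i(a)}\star(1,a,1)\r)$. One also chooses $h_a\in\pi_1(\x,\sigma_0)$ with $h_a\cdot\widetilde{t(a)}=t(\tilde a)$, and a natural choice is $h_a=[p_{i(a)}\star(1,a,1)\star p_{t(a)}^{-1}]$. The data of $\bar\x$ are then $\bar\psi_a=c_{h_a}$ and $\bar g_{a,b}=h_a h_b h_{ab}^{-1}$. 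With these choices we want to check two things:
\begin{itemize}
\item $c_{h_a}\circ\iota_{i(a)}=\iota_{t(a)}\circ\psi_a$, possibly after inserting an edge element;
\item the twisting relation (ii) in the definition of a morphism.
\end{itemize}
Both reduce to homotopies of $\x$-paths: an edge slide for the first and an elementary shortcut (IIIa) for the second. If $h_a$ is not directly adapted, we set $\phi(a):=\iota_{t(a)}(k_a)$, where $k_a\in G_{t(a)}$ absorbs the discrepancy, using the freedom that $\bar\x$ is only defined up to the choices of $\tilde\sigma$, $\tilde a$ and $h_a$.

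\emph{Main obstacle.} The delicate part is Step 3. Choosing the $h_a$ so that the cocycle identity $\iota_{t(a)}(g_{a,b})\phi(ab)=\phi(a)\bar\psi_a(\phi(b))\bar g_{a,b}$ holds requires careful bookkeeping with the twisting elements. The key computation is that $(1,b,1,a,1)\sim(1,ab,g_{a,b}^{-1})$. With $h$ chosen as above, it gives $h_ah_bh_{ab}^{-1}=\iota_{t(a)}(g_{a,b})$, which is exactly the statement needed if all $\phi(a)=1$. I would therefore first attempt trivial edge elements and verify each identity by an explicit chain of the moves in Definition~\ref{defihomotopypaths}.
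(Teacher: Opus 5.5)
Your strategy is the one the paper uses: identify the quotient with $X$, take $\phi_\sigma=\iota_\sigma$ onto $\mathrm{Stab}([p_\sigma\r)$, use trivial edge elements, and check both morphism axioms by moves on $\x$-paths. Allowing arbitrary $p_\sigma$, and proving injectivity of $\iota_\sigma$ by conjugating to $\iota^T_\sigma$, is fine.

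\textbf{The step that fails.} Your choice of $h_a$ is oriented the wrong way.
\begin{enumerate}
\item In \cite[Chap.~III.$\mathcal C$ 2.9]{bridson2013metric} one needs $h_a\cdot t(\tilde a)=\widetilde{t(a)}$. Only then does $c_{h_a}$ send $\mathrm{Stab}(\widetilde{i(a)})\subseteq\mathrm{Stab}(t(\tilde a))$ into $\mathrm{Stab}(\widetilde{t(a)})=\bar G_{t(a)}$.
\item Your element $h_a=[p_{i(a)}\star(1,a,1)\star p_{t(a)}^{-1}]$ satisfies the reverse relation. So $c_{h_a}(\iota_{i(a)}(g))$ fixes $h_a\cdot[p_{i(a)}\r$, a vertex over $i(a)$, and in general it is not in $\bar G_{t(a)}$. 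For a single loop edge, $\pi_1(\x,\sigma_0)$ is an HNN extension, and your $c_{h_a}$ conjugates by the stable letter in the direction that leaves the vertex group.
\item For the same reason, $h_ah_bh_{ab}^{-1}$ contains the block $p_{t(a)}^{-1}\star p_{i(b)}$. This does not cancel, since $t(a)\neq i(b)$. Hence your claimed identity $h_ah_bh_{ab}^{-1}=\iota_{t(a)}(g_{a,b})$ is false for your $h$.
\item Your fallback, absorbing the discrepancy into an edge element $\phi(a)=\iota_{t(a)}(k_a)$, cannot repair this. Admissible choices of $h_a$ differ only by left multiplication with elements of $\bar G_{t(a)}$. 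Your element is the inverse of an admissible one, not such a translate.
\end{enumerate}

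\textbf{The fix.} Take the inverse, $h_a:=[p_{t(a)}\star(1,a^{-1},1)\star p_{i(a)}^{-1}]$, as the paper does. With all $\phi(a)=1$, the first axiom follows directly from a reduction (IIb), not an edge slide:
\[
c_{h_a}\bigl(\iota_{i(a)}(g)\bigr)=[p_{t(a)}\star(1,a^{-1},g,a,1)\star p_{t(a)}^{-1}]=\iota_{t(a)}(\psi_a(g)).
\]
For the second axiom,
\[
h_ah_bh_{ab}^{-1}=[p_{t(a)}\star(1,a^{-1},1,b^{-1},1,ab,1)\star p_{t(a)}^{-1}]=\iota_{t(a)}(g_{a,b}).
\]
Here you use the inverted form $(1,a^{-1},1,b^{-1},1)\sim(g_{a,b},(ab)^{-1},1)$ of your key identity, which is move (IIIb), followed by (IIb). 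This works for any choice of the $p_\sigma$. The maximal tree in the paper is only a convenience: it gives $h_a=1$ for $a\in ET$.
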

\begin{proof}
    We first show that the quotient $\tilde X_{\sigma_0}/\pi_1(\mathbb X, \sigma_0)$ is isomorphic to $X$. Consider a maximal tree $T$ in $X$ and for all $\sigma \in VX$ let $\pi_\sigma = (e_1, \dots, e_n)$ be the unique edge path in $T$ that connects $\sigma_0$ to $\sigma$. To this path $\pi_\sigma$ we associate the $\x$-path $p_\sigma \in \mathcal P_{\sigma_0}^\sigma(\x)$ given by $p_\sigma := (1, e_1, 1, \dots,1,e_n,1)$. Let $\sigma \in VX$ be arbitrary and consider a vertex $[p\r$ of $\tilde X_{\sigma_0}$ such that $t(p) = \sigma$. We can directly calculate that 
    
    $$[p_{i(p)}\star p \star p_{t(p)}^{-1}]\cdot [p_{t(p)}\r = [p_{i(p)}\star p\r = [p\r$$ 
    
    since $p_{i(p)} = (1_{\sigma_0})$. Therefore, the set $$\tilde V := \{[p_\sigma\r : \sigma \in VX\}$$ is a fundamental domain for the vertex set of $\tilde X$. Note that there exists no $g \in \pi_1(\mathbb X,\sigma_0)$ such that $g \cdot [p_\sigma\r = [p_\tau\r$ for $\sigma \neq \tau$ since left-multiplication does not affect the terminal vertex of the underlying path. Therefore, the set is a strict fundamental domain. Let $a \in EX$ be arbitrary and $\sigma = i(a)$. Consider the edge $([p\r,a,[p \star (1,a,1)\r).$ We can calculate, using the same argument as before, that
    $$[p_{i(p)}\star p \star p_{t(p)}^{-1}]\cdot([p_\sigma\r,a,[p_\sigma \star (1,a,1)\r) = ([p\r,a,[p\star (1,a,1)\r).$$
    We therefore obtain that the set $$\tilde E :=\{([p_\sigma \r,a,[p_\sigma \star (1,a,1)\r):a \in EX, \sigma = i(a)\}$$ is a strict fundamental domain for the set $E\tilde X$. Thus $\Theta : VX \sqcup EX \to \tilde V \sqcup \tilde E$ given by 
    $$
    \sigma \mapsto [p_\sigma\r \quad \text{and} \quad  a \mapsto ([p_{i(a)}\r,a,[p_{i(a)}\star(1,a,1)\r)
    $$
    is a bijection such that its inverse defines a projection to $X$.

\smallskip

 We follow \cite[Chap. III.$\mathcal C$, 2.9]{bridson2013metric} to construct a complex of groups $\bar \x$ over this quotient, which we identifiy with $X$ by means of $\Theta$. As above choose $p_\sigma$ to lie within a fixed maximal subtree $T$ of $X$ and choose $\tilde V \sqcup \tilde E$ as the set of representatives for the action. The isotropy subgroup of a vertex $[p_\sigma\r \in \tilde V$ is precisely the group $\bar G_\sigma:=\{[p_\sigma \star (g) \star p_\sigma^{-1}] : g \in G_\sigma \}$ which is isomorphic to $G_\sigma$ for all $\sigma \in VX$. Let $a \in EX$, then the terminal vertex of the edge $([p_{i(a)}\r,a,[p_{i(a)}\star(1,a,1)\r) \in \tilde E$ does not necessarily coincide with $[p_{t(a)}\r \in \tilde V$. However, we find an element $g \in \pi_1(\x,\sigma_0)$ that satisfies $g \cdot [p_{t(a)}\r = [p_{i(a)}\star (1,a,1)\r$, namely
$$[p_{i(a)} \star (1,a,1)\star p_{t(a)}^{-1}]\cdot [p_{t(a)}\r = [p_{i(a)}\star(1,a,1)\r.$$ We define $h_a := [p_{t(a)} \star (1,a^{-1},1)\star p_{i(a)}^{-1}] \in \pi_1(\x,\sigma_0)$ for all $a \in EX$. Thus, for all $h$ in the isotropy subgroup of $[p_{i(a)}\r$, we have that $h_ahh_a^{-1}$ lies in the isotropy subgroup of $[p_{t(a)}\r.$ Note that $h_a = 1$ if $a \in ET$ given the definition of the elements in $\tilde V$.

Set, the boundary monomorphisms $\bar \psi_a := c_{h_a}$ for all $a \in EX$ and define $\bar g_{a,b} := h_ah_bh_{ab}^{-1}.$ We can directly compute that $c_{\bar g_{a,b}} \circ \bar \psi_{ab} = \bar \psi_a \circ \bar \psi_b$ for all $(a,b) \in E^2X$ and $\bar \psi_a(\bar g_{b,c})\bar g_{a,bc} = \bar g_{a,b}\bar g_{ab,c}$ for all $(a,b,c) \in E^3X$. As a result, $\bar \x = ((\bar G_\sigma)_\sigma,(\bar \psi_a)_a, (\bar g_{a,b})_{(a,b)})$ defines a complex of groups over $X$.

\smallskip

It remains to show that $\x$ and $\bar \x$ are isomorphic. For that matter we define an isomorphism $\phi : \x \to \bar \x$ over the identity morphism induced by $\Theta$. We define the local isomorphisms via $$\phi_\sigma(g) := [p_\sigma \star (g) \star p_\sigma^{-1}] \qquad \sigma \in VX, g \in G_\sigma$$ and define the edge elements as $$\phi(a) := 1_{\bar G_{t(a)}}, \qquad a \in EX.$$ We need to check that this data defines a morphism. Let $a \in EX$ and $i(a) = \sigma, t(a) = \tau$. Using an elementary reduction, we can calculate that
\begin{align*}
    c_{\phi(a)}\circ\bar \psi_a \circ \phi_\sigma(g) &= [p_\tau \star (1, a^{-1},1) \star p_\sigma^{-1}][p_\sigma \star (g) \star p_\sigma^{-1}][p_\sigma \star (1,a,1) \star p_\tau^{-1}] \\
    &= [p_\tau \star (1, a^{-1},1)\star (g) \star (1,a,1) \star p_t^{-1}]\\
    &= [p_\tau \star (\psi_a(g))\star p_\tau^{-1}] \\
    &= \phi_\tau \circ \psi_a(g)
\end{align*}
Moreover, suppose that $(a,b) \in E^2X$, then
\begin{align*}
    \phi(a)&\bar \psi_a(\phi(b))\bar g_{a,b} =\bar g_{a,b} = h_ah_bh_{ab}^{-1} \\
    &= [p_{t(a)} \star (1,a^{-1},1)\star p_{i(a)}^{-1}][p_{t(b)} \star (1,b^{-1},1)\star p_{i(b)}^{-1}][p_{i(ab)} \star (1,ab,1)\star p_{t(ab)}^{-1}]\\
    &= [p_{t(a)} \star (1,a^{-1},1,b^{-1},1,ab,1) \star p_{t(a)}^{-1}] \\
    &= [p_{t(a)}\star (1,a^{-1},1,b^{-1},1,b,1,a,g_{a,b}) \star p_{t(a)}^{-1}] \\
    &= [p_{t(a)} \star (g_{a,b}) \star p_{t(a)}^{-1}] \\
    &= \phi_{t(a)}(g_{a,b})\phi(ab),
\end{align*}
where we used that $i(a) = t(b), i(b) = i(ab),$ and $t(ab) = t(a)$ and the inverse of an elementary shortcut. Thus, $\phi = ((\phi_\sigma)_\sigma,(\phi(a))_a)$ defines an isomorphism of complexes of groups.
\end{proof}

\begin{rem}
\label{inlinewithbridsonhaefliger}
Having defined the universal complex $\tilde X_{\sigma_0}$ together with the $\pi_1(\x,\sigma_0)$--action, we may now record the following observation, which places our construction in line with the canonical universal development described in \cite[Chap.~III.$\mathcal C$]{bridson2013metric}.
    Let $\mathbb X$ be a developable complex of groups, $\sigma_0$ a basepoint, and $T$ a maximal tree in $X$. One can define the fundamental group $\pi_1(\mathbb X,T)$ with respect to $T$ which is frequently used in the literature on complexes of groups (e.g., \cite[Chap. III.$\mathcal C$, 3.7]{bridson2013metric} or \cite{lim2008covering}). Based on the canonical morphism $\iota_T : \x \to \pi_1(\x,T)$ one defines the canonical development $D(X,\iota_T)$ \cite[Chap. III.$\mathcal C$, 3.13]{bridson2013metric}. 
    
    We briefly outline that there exists an isomorphism $\pi_1(\x,T) \to \pi_1(\x,\sigma_0)$ and an equivariant isomorphism $D(X,\iota_T) \to \tilde X_{\sigma_0}.$ Since $T$ clearly contains $\sigma_0$ and is a tree, we find for all vertices $\sigma \in VX$ a unique path $(e_1, \dots,e_{n_\sigma})$ in $T$ that issues from $\sigma_0$ and terminates in $\sigma$. By defining $p_\sigma := (1,e_1,1,\dots,1,e_{n_\sigma},1) \in \mathcal P_{\sigma_0}^\sigma(\x)$ for all $\sigma \in VX$, as above, we obtain a homomorphism
$$\kappa_T : \pi_1(\mathbb X,T) \to \pi_1(\mathbb X,\sigma_0)$$ given by the following assigments on generators of $\pi_1(\x,T)$
\begin{align*}
    g &\mapsto [p_\sigma \star (g) \star p_\sigma^{-1}] \\
    a^+ &\mapsto [p_{t(a)} \star (1,a^{-1},1) \star p_{i(a)}^{-1}].
\end{align*}
One can show that this homomorphism is an isomorphism \cite[Chapter~III.$\mathcal C$, 3.7]{bridson2013metric}.
    Furthermore, one verifies that the maps
    \begin{align*}
    F_V&:VD(X,\iota_T) \to V\tilde X_{\sigma_0}; \ (g \cdot G_\sigma, \sigma) \mapsto \kappa_T(g)\cdot [p_\sigma\r \\
    F_E&: ED(X,\iota_T) \to E\tilde X_{\sigma_0}; \ (g\cdot G_{i(a)},a) \mapsto \kappa_T(g) \cdot ([p_{i(a)}\r,a,[p_{i(a)} \star (1,a,1)\r).
    \end{align*}
    define a $\kappa_T$-equivariant isomorphism of scwols $F: D(X,\iota_T) \to \tilde X_{\sigma_0}$.
\end{rem}
\begin{lemdef}
\label{fungrhom}
    Let $\phi: \x \to \y$ be a morphism of complexes of groups over a morphism $f : X \to Y$. 
   Define a map on elementary $\x$-paths by
    $$
        (1,a,1) \mapsto (1,f(a),\phi(a)^{-1}) \quad \text{and} \quad (1,a^{-1},1) \mapsto (\phi(a),f(a)^{-1},1)  \quad \text{for all $a \in EX$}
    $$ and 
    $$
        (g) \mapsto (\phi_\sigma(g)) \quad \text{for all $\sigma \in VX, g \in G_{\sigma}$.}
    $$
    Extending these assignments multiplicatively with respect to concatenation induces a map $\mathfrak p_\phi:\mathcal P(\x) \to \mathcal P(\y)$. 

    Then, for any choice of a base vertex $\sigma_0 \in VX$, this map $\mathfrak p_\phi$ induces a homomorphism
    $$\phi_{\ast,\sigma_0}: \pi_1(\x,\sigma_0) \to \pi_1(\y,f(\sigma_0)), \quad [p] \mapsto [\mathfrak p_\phi(p)].$$
\end{lemdef}
The map $\mathfrak p_\phi$ is the analogue to the map $\mu$ used by Kapovich, Weidmann, and Myasnikov \cite[3.5]{kapovich2005foldings} and Henack \cite[3.28]{henack2018separability} in the context of graphs of groups.
\begin{proof}
    In \cite[Chap. III.$\mathcal C$ 3.6]{bridson2013metric}, Bridson and Haefliger show that $\mathfrak p_\phi$ is well-defined on homotopy classes of $\x$-paths.
\end{proof}
\begin{notation}
    Whenever the basepoint $\sigma_0$ is clear from the context, we omit it and simply write $\phi_\ast$ instead of $\phi_{\ast,\sigma_0}$.
\end{notation}
\begin{cordef}
\label{maponcover}
    Let $\phi: (\mathbb X, \sigma_0) \to (\mathbb Y, \tau_0)$ be a morphism of developable complexes of groups. Then the map
    $$V\tilde \phi : V\tilde X_{\sigma_0} \to V\tilde Y_{\tau_0}, [p \r \mapsto [\mathfrak p_\phi(p)\r$$ extends to a $\phi_\ast$-equivariant morphism $\tilde \phi: \tilde X_{\sigma_0} \to \tilde Y_{\tau_0}$.
\end{cordef}

We now show that the universal complex $\tilde X_{\sigma_0}$ is simply connected, i.e., connected with trivial fundamental group (see \cite[Chap.~III.$\mathcal C$, Definition~1.8]{bridson2013metric}). Since the vertices of $\tilde X_{\sigma_0}$ are given by equivalence classes of $\x$-paths in $\mathcal P_{\sigma_0}(\x)$, the argument closely parallels the classical topological case. 

We note that this statement also follows from Remark~\ref{inlinewithbridsonhaefliger} together with \cite[Chap.~III.$\mathcal C$,~3.13]{bridson2013metric}, where it is shown that the canonical development $D(X,\iota_T)$ is simply connected.

\begin{lem}
\label{canonicalcover}
    The action $\pi_1(\x,\sigma_0) \curvearrowright \tilde X_{\sigma_0}$ induces a canonical covering morphism of complexes of groups $\lambda : \tilde X_{\sigma_0} \to \x$ over the projection morphism $\pi:\tilde X_{\sigma_0} \to X$, where we identify $\tilde X_{\sigma_0}$ with the trivial complex of groups over $\tilde X_{\sigma_0}$. Furthermore, the covering satisfies 
$\lambda(\mathfrak a) = 1$ for all edges $\mathfrak a \in E \tilde X_{\sigma_0}.$

\end{lem}
\begin{proof}
We use the notation from the proof of Proposition~\ref{samescwol} and identify $\x$ with $\bar \x$ by means of the defined isomorphism $\phi$. Now $\x$ is the complex of groups induced by the action $\pi_1(\x,\sigma_0) \curvearrowright \tilde X_{\sigma_0}$. 
    The existence of a covering morphism $\lambda$ over $\pi$ follows from \cite[Chap.~III.$\mathcal C$ 5.4(2)]{bridson2013metric}. We just have to check that all edge elements are trivial. For that matter we use the explicit calculation for the edge elements of $\lambda$ as provided in \cite[Chap.~III.$\mathcal C$ 5.4(2)]{bridson2013metric}. Let $\sigma \in VX$ and let $[p\r \in V\tilde X_{\sigma_0}$ be a vertex that projects to $\sigma$, i.e., $t(p) = \sigma$. Then, $[p \star p_{\sigma}^{-1}] \cdot [p_\sigma\r = [p\r$. Accordingly, we set $g_{[p\r} := [p \star p_{t(p)}^{-1}] \in \pi_1(\x,\sigma_0)$ for all $[p\r \in V\tilde X_{\sigma_0}$. Let $\mathfrak a = ([p\r, a,[p \star(1,a,1)\r) \in E\tilde X_{\sigma_0},$ then $$\lambda(\mathfrak a) = g_{t(\mathfrak a)}^{-1}g_{i(\mathfrak a)}h_a^{-1} = [p_{t(a)} \star (1, a^{-1} ,1) \star p^{-1}][p \star p_{i(a)}^{-1}][p_{i(a)} \star (1,a,1) \star p_{t(a)}^{-1}] = 1.$$
\end{proof}
\begin{lemdef}[Characteristic subgroup of a covering]
\label{deficharactsubgroup}
    Let $\phi : (\x,\sigma_0) \to (\y,\tau_0)$ be a covering of developable complexes of groups. Then the induced homomorphism $\phi_{\ast,\sigma_0}$ is injective and the subgroup 
    $$
    \phi_\ast(\pi_1(\x,\sigma_0)) \leq \pi_1(\y,\tau_0)
    $$
    is called the \textit{characteristic subgroup} of the covering $\phi$.
\end{lemdef}
Note that the injectivity of $\phi_\ast$ also follows from \cite[Proposition 33]{lim2008covering} in the language of Bridson and Haefliger \cite[Chap.~III.$\mathcal C$]{bridson2013metric} using the constructions with respect to a fixed maximal subtree.
\begin{proof}
   We identify $\x$ and $\y$ with the complexes of groups induced by the actions of the respective fundamental groups on their universal complexes, as in Proposition~\ref{samescwol}. Let $[p] \in \pi_1(\x, \sigma_0)$ such that $\phi_\ast([p]) = [(1_{G_{\tau_0}})]$. By Lemma~\ref{lemcoveringisom}, the induced morphism $\tilde \phi$ is a $\phi_\ast$-equivariant isomorphism of scwols. Hence $[p]$ acts trivially on $\tilde X_{\sigma_0}$ and therefore $[p] \in G_{\sigma_0}$. Since $\phi$ is a covering, the local homomorphism $\phi_{\sigma_0}$ is injective. Together with $\phi_{\sigma_0}([p])  =\phi_\ast([p]) = [(1_{G_{\tau_0}})]$ this implies $[p] = [(1_{G_{\sigma_0}})]$.
\end{proof}
\begin{prop}
\label{propsimplyconnectedcovering}
    The scwol $\tilde X_{\sigma_0}$ is simply connected.
\end{prop}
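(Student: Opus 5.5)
Connectedness of $\tilde X_{\sigma_0}$ is already part of Lemma~\ref{univcomplex}, so the task is to show that every closed path in $\tilde X_{\sigma_0}$ is homotopic, in the sense of \cite[Chap.~III.$\mathcal C$, 1.6]{bridson2013metric}, to the constant path at $\tilde\sigma_0 = [1_{\sigma_0}\r$. I will treat $\tilde X_{\sigma_0}$ as the trivial complex of groups and use the canonical covering $\lambda : \tilde X_{\sigma_0} \to \x$ over $\pi$ from Lemma~\ref{canonicalcover}, whose edge elements are all trivial. The idea, as for topological universal covers, is that a loop in $\tilde X_{\sigma_0}$ projects to an $\x$-path, and the terminal vertex of its lift is determined by the homotopy class of that projection.

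\emph{Step 1 (projection and lifting).} Let $\gamma = (\mathfrak e_1,\dots,\mathfrak e_n)$ be a loop at $\tilde\sigma_0$ in $\tilde X_{\sigma_0}$, viewed as a $\tilde X_{\sigma_0}$-path with trivial group elements. Since $\lambda(\mathfrak a)=1$ for every edge, the map $\mathfrak p_\lambda$ of Definition and Lemma~\ref{fungrhom} sends $\gamma$ to the $\x$-path $q := (1,\pi(\mathfrak e_1),1,\dots,1,\pi(\mathfrak e_n),1)$, up to identifying $\x$ with $\bar\x$ as in Proposition~\ref{samescwol}. Conversely, the proof of Lemma~\ref{univcomplex} attaches to every $\x$-path $p$ at $\sigma_0$ a path $\gamma_p$ from $\tilde\sigma_0$ to $[p\r$. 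I will check by induction on $n$ that for the loop $\gamma$ one has $\gamma=\gamma_q$ edge by edge: each edge $\mathfrak e_i$ is of the form $([r\r,a,[r\star(1,a,1)\r)$ with $[r\r$ the current vertex, and the next vertex is obtained by appending $(1,a^{\pm1},1)$ to a representative. Consequently $[q\r = t(\gamma) = \tilde\sigma_0$. Unwinding Definition~\ref{deficoverequivalencerelation}, this means $[q] = [(g)]$ for some $g\in G_{\sigma_0}$.

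\emph{Step 2 (the fundamental group of $\tilde X_{\sigma_0}$ injects).} The morphism $\lambda$ is a covering, so by Definition and Lemma~\ref{deficharactsubgroup} the map $\lambda_\ast : \pi_1(\tilde X_{\sigma_0},\tilde\sigma_0)\to\pi_1(\x,\sigma_0)$ is injective. Here $\pi_1(\tilde X_{\sigma_0},\tilde\sigma_0)$ agrees with the scwol fundamental group because the complex of groups is trivial. By Step 1, $\lambda_\ast([\gamma]) = [q]$ lies in the image of $G_{\sigma_0}$, which is the stabiliser of $\tilde\sigma_0$. So it remains to show that $[q]$ is trivial. The hard part is separating $[q]$ from a possibly nontrivial element of $G_{\sigma_0}$.

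\emph{Step 3 (this is the main obstacle).} I will compare through $\lambda_\ast$ again. The element $[(g)]$ comes from the local group at $\sigma_0$, while every element in the image of $\lambda_\ast$ is represented by an $\x$-path whose vertex group entries are all $1$. I plan to use the covering condition (ii) at $\tilde\sigma_0$, where all local groups upstairs are trivial, to show that the image of $\lambda_\ast$ meets the stabiliser $G_{\sigma_0}$ trivially. Concretely, an element $h\in\operatorname{im}\lambda_\ast$ acts on $\tilde X_{\sigma_0}$ by sending $\tilde\sigma_0$ to the endpoint of the lift of its representing path. If $h$ fixes $\tilde\sigma_0$, then $h$ also fixes all edges at $\tilde\sigma_0$ by the action axiom (ii). Propagating along lifts of the loop shows that $h$ acts trivially on the lifted loop, and since $\lambda$ has trivial local groups this forces $h=1$.

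There is a circularity risk: the proof of Definition and Lemma~\ref{deficharactsubgroup} cites Lemma~\ref{lemcoveringisom}. If that lemma depends on simple connectivity, I would fall back to the direct route. This means verifying by hand that each elementary move from Definition~\ref{defihomotopypaths}, applied to a representative with trivial vertex entries, changes $\gamma_q$ only by a scwol homotopy: moves of type III become the triangle relations from composed edges, and moves of type II become backtracking. One then shows that the homotopy $q\sim(g)$ can be chosen through such representatives, using edge slides and Remarks~\ref{universalcomplexremark}. This pushes $\gamma$ to the constant loop.
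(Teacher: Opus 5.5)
Your Steps~1--2 follow the paper's proof: project a loop through $\lambda$, use injectivity of $\lambda_\ast$, and land in the image of $G_{\sigma_0}$. Your circularity worry is justified. Definition and Lemma~\ref{deficharactsubgroup} is proved via Lemma~\ref{lemcoveringisom}, and the proof of that lemma invokes the present proposition.

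\textbf{The gap in Step~3.} You correctly call this step the crux, but the argument offered does not establish it.
\begin{aufzii}
\item Action axiom~(ii) only says that the stabiliser of $\tilde\sigma_0$ fixes the edges \emph{issuing} from $\tilde\sigma_0$. For $t(a)=\sigma_0$, the edges over $a$ \emph{terminating} at $\tilde\sigma_0$ are $([(k,a^{-1},1)\r,a,\tilde\sigma_0)$, with $k$ running through representatives of $G_{\sigma_0}/\psi_a(G_{i(a)})$. The element $[(g)]$ sends the edge indexed by $k$ to the one indexed by $gk$, so nothing propagates along upward steps of the loop.
\item Even an element fixing the whole loop, or all of $\tilde X_{\sigma_0}$, need not be trivial. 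The action can have a nontrivial kernel (the group $K$ of Theorem~\ref{mainthm}). The triviality of the local groups upstairs says nothing about which element of $G_{\sigma_0}$ the projected loop represents.
\end{aufzii}

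\textbf{The flaw in Step~1.} The claim $\gamma=\gamma_q$ fails at inverse edges, because a backward step is not determined by its label. The edges over $a$ into a given vertex are indexed by cosets, as in the proof of Lemma~\ref{lemcoveringisom}. For the same reason the $\lambda(\mathfrak a)$ cannot all be trivial once some $\psi_a$ is not surjective. Covering condition~(ii) forces the edge elements of the edges over $a$ into a fixed vertex to represent pairwise distinct cosets of $\psi_a(G_{i(a)})$. The paper deduces $g=1$ precisely from this triviality, so that step cannot be imported to finish your argument.

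\textbf{How to fix it.} You never need $g=1$. Your fallback is the right route once you drop the restriction to trivial entries. That restriction cannot be maintained anyway: shortcuts create entries $g_{a,b}^{\pm1}$, and $\gamma_q$ need not be your loop.

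For $p=(g_0,e_1,g_1,\dots,e_n,g_n)\in\mathcal P_{\sigma_0}(\x)$, let $\gamma_p$ be the path from the proof of Lemma~\ref{univcomplex}. Its $k$-th vertex is $[p_k\r$ for the initial subpath $p_k=(g_0,e_1,\dots,e_k,g_k)$.
\begin{aufzii}
\item Every edge path in $\tilde X_{\sigma_0}$ starting at $\tilde\sigma_0$ is of this form. At an inverse step $e_i=a^{-1}$, replace $g_{i-1}$ by $g_{i-1}k$ for a suitable $k\in G_{t(a)}$ to select the edge; this does not change $[p_{i-1}\r$.
\item Edge slides change no class $[p_k\r$, so they leave $\gamma_p$ unchanged.
\item Reductions delete a genuine backtrack $(\mathfrak e,\mathfrak e^{-1})$.
\item Shortcuts realise the relations $(\mathfrak b,\mathfrak a)\leftrightarrow(\mathfrak a\mathfrak b)$ and $(\mathfrak a^{-1},\mathfrak b^{-1})\leftrightarrow((\mathfrak a\mathfrak b)^{-1})$ coming from composition in $\tilde X_{\sigma_0}$.
\end{aufzii}
Hence $p\sim p'$ implies that $\gamma_p$ and $\gamma_{p'}$ are homotopic edge paths.

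Now let $\gamma=\gamma_p$ be a loop at $\tilde\sigma_0$. Then $[p\r=\tilde\sigma_0$ means $p\sim(g)$ for some $g\in G_{\sigma_0}$. So $\gamma$ is homotopic to $\gamma_{(g)}$, which is the constant path whatever $g$ is. This argument uses neither $\lambda$ nor the injectivity of $\lambda_\ast$, so it also removes the circularity.
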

\begin{proof}
Let $[1_{\sigma_0}\r$ the base vertex of $\tilde X_{\sigma_0}$ and $\lambda: \tilde X_{\sigma_0} \to \x$ the covering from Lemma~\ref{canonicalcover}. Note that by Definition and Lemma~\ref{deficharactsubgroup} $\lambda_\ast$ is injective and by \cite[Chap. III.$\mathcal C$, 3.11]{bridson2013metric} the fundamental group of a trivial complex of groups coincides with that of the underlying scwol. Let $\gamma=(g_0,e_1,g_1, \dots,g_{n-1},e_n,g_n) \in \mathcal P_{\sigma_0}^{\sigma_0}(\x)$ be an $\x$-loop that lifts to a loop $\tilde \gamma \in \mathcal P_{[1_{\sigma_0}\r}^{[1_{\sigma_0}\r}(\tilde X_{\sigma_0})$. Note, that these loops represent the elements of the characteristic subgroup of the covering $\lambda$. Now let the $k-$th vertex that $\tilde \gamma$ traverses be $[\gamma_k\r$, where $\gamma_k := (g_0, e_1, g_1, \dots, g_{k-1},e_k,g_k)$ for $1 \leq k \leq n$ is the subpath of $\gamma$ consisting only of the first $k$ edges of $\gamma$. Then $\tilde \gamma$ is the lift of $\gamma$ at $[1_{\sigma_0}\r$ and therefore $[1_{\sigma_0}\r = t(\tilde \gamma) = [\gamma_n \r = [\gamma \r$. Thus, $\gamma = (g)$ for some $g \in G_{\sigma_0}$. 
However, since all edge elements $\lambda(\mathfrak a)$ are trivial and $\lambda_{[1_{\sigma_0}\r}$ is trivial, we have that $g = 1_{G_{\sigma_0}}$. We can conclude that $\gamma \sim (1_{G_{\sigma_0}})$. Thus, $\lambda_\ast(\pi_1(\tilde X_{\sigma_0}, [1_{\sigma_0}\r)$ is trivial.
\end{proof}

The following lemma also follows from \cite[Corollary~35]{lim2008covering} when formulated in the language of Bridson and Heafliger \cite[Chap.~III.$\mathcal C$]{bridson2013metric}.
\begin{lem}
\label{lemcoveringisom}
    Let $\phi: (\x, \sigma_0) \to (\y, \tau_0)$ be a covering of developable complexes of groups over a morphism $f : X \to Y$, then $\tilde \phi : \tilde X_{\sigma_0} \to \tilde Y_{\tau_0}$ is an isomorphism.
\end{lem}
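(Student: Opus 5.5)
We will show that $\tilde \phi$ is bijective on vertices and on edges; it is already a morphism of scwols by Definition and Corollary~\ref{maponcover}. Injectivity on edges then follows from injectivity on vertices. Indeed, an edge of $\tilde X_{\sigma_0}$ is determined by its initial vertex $[p\r$ and its label $a\in EX$. Condition (iii) in the definition of a morphism of scwols says that the edges issuing from $[p\r$ map bijectively onto the edges issuing from $\tilde\phi([p\r)$. So once $\tilde\phi$ is a bijection on vertices, it is a bijection on edges, and the inverse automatically respects $i$, $t$ and composition. It therefore suffices to prove that $V\tilde\phi$ is bijective.

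\emph{Surjectivity.} Let $[q\r\in V\tilde Y_{\tau_0}$ with $q=(h_0,e'_1,h_1,\dots,e'_n,h_n)$. We lift $q$ edge by edge, inducting on $n$. Suppose we already have $p$ with $[\mathfrak p_\phi(p)\r=[q_{k}\r$, where $q_k$ is the initial segment of $q$ with $k$ edges. Then $\mathfrak p_\phi(p)\star(g)\sim q_k$ for some $g\in G_{f(t(p))}$, and we must lift one further step $(g h_k, e'_{k+1},1)$.

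If $e'_{k+1}=a'\in EY$ issues from $f(t(p))$, condition (iii) for the scwol morphism $f$ gives a unique $a\in EX$ with $i(a)=t(p)$ and $f(a)=a'$. If $e'_{k+1}=a'^{-1}$, we use covering condition (ii) at $\sigma=t(p)$. It provides $a\in f^{-1}(a')$ with $t(a)=\sigma$ and $g'\in G_\sigma$ such that
\[
g h_k\,\psi_{a'}(G_{i(a')}) = \phi_\sigma(g')\phi(a)\,\psi_{a'}(G_{i(a')}).
\]
Using the edge-slide moves (Ia) and (Ib) together with the formulas defining $\mathfrak p_\phi$, we then check that $\mathfrak p_\phi(p\star(g',a^{-1},1))$ is $\equiv$-equivalent to $q_{k+1}$. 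In the case of a positive edge the computation is immediate by Remarks~\ref{universalcomplexremark}(a).

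\emph{Injectivity.} This is the main obstacle. Suppose $[\mathfrak p_\phi(p)\r=[\mathfrak p_\phi(p')\r$, so that $\mathfrak p_\phi(p)\star(g)\sim\mathfrak p_\phi(p')$. Consider the loop $\mathfrak p_\phi(p\star p'^{-1})$ adjusted by $g$. My plan is to avoid lifting homotopies move by move. Instead I will use simple connectivity of $\tilde Y_{\tau_0}$ from Proposition~\ref{propsimplyconnectedcovering}, together with the fact that $\tilde X_{\sigma_0}\to\tilde Y_{\tau_0}$ is a covering of scwols in the sense of \cite[Chap.~III.$\mathcal C$, 1.9]{bridson2013metric}. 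That is, $\tilde\phi$ restricted to the star of each vertex, $\{\mathfrak a : t(\mathfrak a)=[p\r\}$ and $\{\mathfrak a: i(\mathfrak a)=[p\r\}$, is bijective onto the corresponding star in $\tilde Y_{\tau_0}$.

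The outgoing star is handled by scwol morphism condition (iii). The incoming star is handled by covering condition (ii): the incoming edges at $[p\r$ with label $a$ correspond to $G_\sigma/\psi_a(G_{i(a)})$, and those at the image correspond to $G_{f(\sigma)}/\psi_{a'}(G_{i(a')})$. This step requires identifying incoming edges with cosets, which I expect to be the delicate bookkeeping. Here I would use the fact that $\phi_\sigma$ is injective, so that the stabilizer $\bar G_\sigma$ maps isomorphically.

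Given this local bijectivity, $\tilde\phi$ is a covering of connected scwols onto a simply connected scwol. By \cite[Chap.~III.$\mathcal C$, 1.11]{bridson2013metric}, a covering of a simply connected scwol by a connected scwol is an isomorphism. Connectedness of $\tilde X_{\sigma_0}$ is given by Definition and Lemma~\ref{univcomplex}. This yields injectivity and concludes the proof.
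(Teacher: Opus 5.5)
Your proposal is correct and follows essentially the paper's argument. Outgoing stars are matched by condition (iii) for scwol morphisms, and incoming stars by covering condition (ii) after parametrizing the edges ending at $[p\r$ with label $a$ by $g\,\psi_a(G_{i(a)}) \mapsto ([p\star(g,a^{-1},1)\r,a,[p\r)$. Under this parametrization $\tilde\phi$ is exactly the map in (ii), so injectivity of $\phi_\sigma$ is not needed there; what that step uses is developability, to see that distinct cosets give distinct edges. Path lifting and simple connectivity then finish the proof, as in the paper.

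One caution, which applies equally to the paper: Proposition~\ref{propsimplyconnectedcovering} is proved via Definition and Lemma~\ref{deficharactsubgroup}, which itself invokes this lemma, so you should take simple connectivity of $\tilde Y_{\tau_0}$ from Remark~\ref{inlinewithbridsonhaefliger} together with \cite[Chap.~III.$\mathcal C$, 3.13]{bridson2013metric}.
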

\begin{proof} The proof follows a standard argument. We show that $\tilde \phi$ is a covering of simply connected scwols (see \cite[Chap.~III.$\mathcal C$,~1.9]{bridson2013metric} for the definition), which implies that $\tilde \phi$ is an isomorphism.
We first show that $\tilde \phi$ is locally bijective.
    Let $[p\r \in V \tilde X_{\sigma_0}$ and $a \in EX$ such that $t(a) = t(p) = \sigma$. Fix a set $L_a$ of representatives of left-cosets $g \cdot \psi_a(G_{i(a)})$ in $G_\sigma$. For any $g \in L_a$ there exists a distinct edge 
    $$
    ([p \star (g,a^{-1},1)\r,a,[p\r) \in E\tilde X_{\sigma_0}
    $$
    projecting to $a$. Furthermore, different elements $g,h \in L_a$ determine different edges. Indeed, $[p \star (g,a^{-1},1)\r = [p\star (h,a^{-1},1)\r$ implies the existence of an element $k \in G_\sigma$ such that $[p \star (g,a^{-1},k)] = [p\star (h,a^{-1},1)]$, which yields $g \cdot \psi_a(k) = h$. Hence $L_a$ encodes the edges of $\tilde X_{\sigma_0}$ terminating at $[p\r$ and projecting to $a \in EX$. Set 
    $$
    L := L_{[p\r} := \bigcup_{a \in EX: t(a) = \sigma} L_a.
    $$
    Then $L$ is in bijection with the set of edges of $\tilde X_{\sigma_0}$ that terminate at $[p\r$. Now set $[q\r :=\tilde \phi([p\r)$. Analogously we obtain a set 
    $$
    L' = L_{[q\r} = \bigcup_{a' \in EY : t(a') = f(\sigma)} L_{a'}
    $$
    which parameterizes the edges in $\tilde Y_{\tau_0}$ terminating at $[q\r$. For any $a' \in EY$ with $t(a') = f(\sigma)$, part~(ii) of the definition of a covering yields a bijection 
    $$L_{a'} \longleftrightarrow \bigcup_{a \in EX: f(a) = a'}L_a.
    $$ 
    Note that the map $g \mapsto \phi_\sigma(g)\phi(a)$ inducing the bijection from part (ii) of the definition of a covering corresponds to $(g,a^{-1},1) \mapsto (\phi_\sigma(g) \phi(a), f(a)^{-1},1)$ via $\mathfrak p_\phi$ in our setting. Since $f$ is surjective, it follows that $L$ and $L'$ are in bijection. Together with part (iii) of the definition of a morphism of scwols, this shows that $\tilde\phi$ induces a bijection between the stars of corresponding vertices. In particular, $\tilde\phi$ is locally bijective.
    
We next show that $\tilde \phi$ is surjective.
Let $[q\r \in V\tilde Y_{\tau_0}$. Since $\tilde Y_{\tau_0}$ is connected, there exists an edge path in $\tilde Y_{\tau_0}$ from $[1_{\tau_0}\r$ to $[q\r$. As $\tilde \phi([1_{\sigma_0}\r)=[1_{\tau_0}\r$ and $\tilde \phi$ induces a bijection on stars, this path lifts inductively to an edge path in $\tilde X_{\sigma_0}$ starting at $[1_{\sigma_0}\r$. Hence the endpoint of this lifted path maps to $[q\r$, so 
$\tilde \phi$ is surjective on vertices. Since $\tilde \phi$ is bijective on stars, it is 
also surjective on edges. Therefore $\tilde \phi$ is surjective.

Given that $\tilde \phi$ is both locally bijective and globally surjective we obtain that $\tilde \phi$ is a covering of scwols. Note that this property of $\tilde \phi$ also follows from \cite[Chap. III.$\mathcal C$, 5.4(2)]{bridson2013metric}.
    
Since both $\tilde X_{\sigma_0}$ and $\tilde Y_{\tau_0}$ are simply connected by Proposition~\ref{propsimplyconnectedcovering}, it follows that $\tilde\phi$ is an isomorphism.
\end{proof}

\section{Homotopic Morphisms}
\label{section4}
In \cite[section 4.1]{kapovich2005foldings} and \cite{delgado2025pullbacks} auxiliary moves of type A0 and A1 are discussed, which change a morphism of graphs of groups in an inessential way. These moves, in the language of complexes of groups, coincide with \textit{homotopies of morphisms of complexes of groups} as defined by Bridson and Haefliger.
\begin{defi}[\texorpdfstring{Homotopies of Morphisms of Complexes of Groups \cite[Chap.~III.$\mathcal C$~2.4]{bridson2013metric}}{}]
\label{defihomotopyofmorphisms}
     Two morphisms $\phi, \eta : \x \to \y$ of complexes of groups over a morphism of scwols $f : X \to Y$ are called \textit{homotopic} if there exists a family of elements $(k_\sigma)_{\sigma \in VX}$ such that 
\begin{aufzii}
    \item $k_\sigma \in G_{f(\sigma)}$,
    \item $\eta_\sigma = c_{k_\sigma} \circ \phi_\sigma$, and
    \item $\eta(a) = k_{t(a)}\phi(a)\psi_{f(a)}(k_{i(a)}^{-1})$
\end{aufzii}
for all $\sigma \in VX$ and $a \in EX$. In this situation, we say that $(k_\sigma)_{\sigma \in VX}$ defines a \textit{homotopy} from $\phi$ to $\eta$. Observe that if $(k_\sigma)_{\sigma \in VX}$ defines a homotopy from $\phi$ to $\eta$, then the family $(k_\sigma^{-1})_{\sigma \in VX}$ defines a homotopy from $\eta$ to $\phi$. We denote with $[\phi]$ the equivalence class of $\phi$ with respect to homotopy of morphisms and write $\phi \sim \eta$ if $[\phi] = [\eta]$. Furthermore, if $(k_\sigma)_{\sigma \in VX}$ defines a homotopy from $\phi$ to $\eta$ such that $k_{\sigma_0} = 1$ for some vertex $\sigma_0$, we say that $\phi$ is homotopic to $\eta$ relative $\sigma_0$ and use the notation $\phi \sim_{\sigma_0} \eta$. Observe that being homotopic relative a basepoint $\sigma_0$ is again an equivalence relation.
\end{defi}
This notion of homotopy (relative a vertex $\sigma_0$) of morphisms of complexes of groups coincides with the $\approx$-equivalence ($\sim$-equivalence) defined in \cite[2.1]{delgado2025pullbacks} for morphisms of graphs of groups.

\smallskip

It is the aim of this section to make precise to what extend homotopic morphisms behave differently. 
We start by observing that homotopy is preserved by composition of morphisms.
\begin{lem}
\label{lemhomotopyrespectscomposition}
Let $\u,\x,$ and $\y$ be developable complexes of groups, let $\phi:\u \to \x$ and $\eta: \x \to \y$ be morphisms, then the map $$
\circ : \left (\faktor{\hbox{Morph}(\u,\x)}{\sim} \right ) \times \left (\faktor{\hbox{Morph}(\x,\y)}{\sim} \right ) \to \left (\faktor{\hbox{Morph}(\u,\y)}{\sim} \right )$$
$$([\phi],[\eta]) \mapsto [\phi] \circ [\eta] :=[\eta \circ \phi]$$
is well-defined.
\end{lem}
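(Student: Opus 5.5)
It suffices to treat the two slots separately and then compose the two resulting homotopies. Suppose $\phi' \sim \phi$ via a family $(k_\sigma)_{\sigma \in VU}$ with $k_\sigma \in G_{f(\sigma)}$, and $\eta' \sim \eta$ via $(l_\tau)_{\tau \in VX}$ with $l_\tau \in G_{g(\tau)}$. Here $f$ and $g$ are the underlying scwol morphisms, which a homotopy does not change. Since homotopy is an equivalence relation (symmetric by the observation in Definition~\ref{defihomotopyofmorphisms}, and transitive by multiplying families), it is enough to prove two things: $\eta \circ \phi' \sim \eta \circ \phi$, and $\eta' \circ \phi \sim \eta \circ \phi$.

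\emph{First slot.} I propose the family $m_\sigma := \eta_{f(\sigma)}(k_\sigma) \in G_{g(f(\sigma))}$. For the local homomorphisms,
\[
(\eta\circ\phi')_\sigma = \eta_{f(\sigma)}\circ c_{k_\sigma}\circ\phi_\sigma = c_{m_\sigma}\circ(\eta\circ\phi)_\sigma,
\]
because $\eta_{f(\sigma)}$ is a homomorphism. For the edge elements, write $a \in EU$, $\sigma = i(a)$, $\tau = t(a)$. Then
\[
(\eta\circ\phi')(a) = \eta_{f(\tau)}\bigl(k_\tau\,\phi(a)\,\psi_{f(a)}(k_\sigma^{-1})\bigr)\,\eta(f(a)).
\]
Apply axiom (i) of a morphism to $\eta$ at the edge $f(a)$, which reads $\eta_{f(\tau)}\circ\psi_{f(a)} = c_{\eta(f(a))}\circ\psi_{g(f(a))}\circ\eta_{f(\sigma)}$. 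This rewrites the expression as
\[
m_\tau\,\eta_{f(\tau)}(\phi(a))\,\eta(f(a))\,\psi_{g f(a)}(m_\sigma^{-1}) = m_\tau\,(\eta\circ\phi)(a)\,\psi_{gf(a)}(m_\sigma^{-1}),
\]
which is exactly condition (iii) of Definition~\ref{defihomotopyofmorphisms}.

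\emph{Second slot.} I propose the family $n_\sigma := l_{f(\sigma)}$. The local homomorphisms satisfy $\eta'_{f(\sigma)}\circ\phi_\sigma = c_{l_{f(\sigma)}}\circ\eta_{f(\sigma)}\circ\phi_\sigma$ directly. For the edge elements,
\[
(\eta'\circ\phi)(a) = l_{f(\tau)}\,\eta_{f(\tau)}(\phi(a))\,l_{f(\tau)}^{-1}\cdot l_{f(\tau)}\,\eta(f(a))\,\psi_{gf(a)}(l_{f(\sigma)}^{-1}).
\]
The two middle factors $l_{f(\tau)}^{-1}l_{f(\tau)}$ cancel, leaving $n_\tau\,(\eta\circ\phi)(a)\,\psi_{gf(a)}(n_\sigma^{-1})$, as required.

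The only step that needs care is the edge-element computation in the first slot. There the conjugation from axiom (i) of $\eta$ must be applied with the correct side and inverse, so that the factor $\eta(f(a))$ moves to the left of $\psi_{gf(a)}(m_\sigma^{-1})$. Everything else is bookkeeping.
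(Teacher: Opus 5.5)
Your proof is correct and is essentially the paper's argument. The paper checks the homotopy in one combined computation with the family $m_\sigma = l_{f(\sigma)}\cdot\eta_{f(\sigma)}(k_\sigma)$, using the same morphism axiom (i) for $\eta$ at $f(a)$ to move $\eta(f(a))$ past $\psi_{gf(a)}$; you split this into two one-slot homotopies, with the second applied to $\phi'$ in place of $\phi$, and their product under transitivity is exactly that family.
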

\begin{proof}
Let $\phi^i : \u \to \x$ and $\eta^i : \x \to \y$ be morphisms of complexes of groups over morphisms $f: U \to X$ and $d : X \to Y$ for $i = 1,2$. Suppose that $(k_\sigma)_{\sigma \in VU}$ defines a homotopy from $\phi^1$ to $\phi^2$ and $(l_\tau)_{\tau \in VX}$ defines a homotopy from $\eta^1$ to $\eta^2$. This yields morphisms $\eta^i \circ \phi^i : \u \to \y$ over $d \circ f: U \to Y$ for $i = 1,2$.
    Let $\sigma \in UX$ be arbitrary. We calculate
    \begin{align*}
        (\eta^2 \circ \phi^2)_\sigma &= \eta^2_{{f(\sigma)}}\circ \phi^2_\sigma \\
         &= c_{l_{f(\sigma)}}\circ \eta^1_{{f(\sigma)}} \circ c_{k_\sigma}  \circ \phi^1_{\sigma} \\
          &= c_{l_{f(\sigma)}} \circ c_{\eta^1_{{f(\sigma)}}(k_\sigma)}\circ \eta^1_{{f(\sigma)}} \circ\phi^1_{\sigma} \\
          &= c_{l_{f(\sigma)} \cdot \eta^1_{{f(\sigma)}}(k_\sigma)} \circ \eta^1_{{f(\sigma)}} \circ\phi^1_{\sigma} \\
          &= c_{l_{f(\sigma)} \cdot \eta^1_{{f(\sigma)}}(k_\sigma)} \circ (\eta^1 \circ \phi^1)_\sigma.
    \end{align*}
   Furthermore let $a \in EU$ be arbitrary, then
    \begin{align*}
\eta^2 \circ \phi^2(a) &= \eta^2_{{f(t(a))}}(\phi^2(a))\cdot \eta^2(f(a)) \\ &= \biggl ( c_{l_{f(t(a))}}\circ \eta^1_{{f(t(a))}}(k_{t(a)}\phi^1(a)\psi_{f(a)}(k_{i(a)}^{-1})) \biggr ) \cdot \biggl ( l_{t(f(a))}\eta^1(f(a))\psi_{d(f(a))}(l_{i(f(a))}^{-1}) \biggr )\\
&= \bigl ( l_{f(t(a))} \cdot \eta^1_{{f(t(a))}}(k_{t(a)}) \bigr ) \cdot \bigl (\eta^1 \circ \phi^1(a)  \bigr ) \cdot  \bigl (\psi_{d(f(a))}(l_{f(i(a))} \cdot \eta^1_{{f(i(a))}}(k_{i(a)}))^{-1} \bigr ).
    \end{align*}
    Therefore, the family $(m_\sigma)_{\sigma \in VU}$ with $m_\sigma := l_{f(\sigma)} \cdot \eta^1_{{f(\sigma)}}(k_\sigma)$ for all $\sigma \in VU$ defines a homotopy from $\eta^1 \circ \phi^1$ to $\eta^2 \circ \phi^2$.
\end{proof}

We now investigate how two homotopic morphisms of pointed developable complexes of groups, $\phi, \eta : (\x,\sigma_0) \to (\y, \tau_0)$, can be characterized by their induced maps $\mathfrak p_\phi$ and $\mathfrak p_\eta$ at the level of paths $\mathcal P(\x) \to \mathcal P(\y)$, as well as by the induced morphisms $\tilde \phi$ and $\tilde \eta$ at the level of universal complexes $\tilde X_{\sigma_0} \to \tilde Y_{\tau_0}$. These characterizations will be instrumental for the technical arguments developed in Section~\ref{section5}. Furthermore, they allow us to observe that homotopies performed away from the chosen base points do not alter the induced morphism on the corresponding universal complexes, a fact that has already been established in the setting of graphs of groups \cite{kapovich2005foldings,delgado2025pullbacks}. Note that, recently, Delgado and coauthors provided proofs of the following two statements in the case of graphs of groups \cite[Proposition~2.5]{delgado2025pullbacks}.

\begin{prop}
    \label{characthomotopy} 
    Let $\phi,\eta : (\x,\sigma_0) \to (\y, \tau_0)$ be two morphisms of developable complexes of groups over a morphism of scwols $f:X \to Y$. The family $(k_\sigma)_{\sigma \in VX}$ defines a homotopy from $\phi$ to $\eta$ if and only if
    $$\mathfrak p_\eta(p) \sim (k_{i(p)}) \star \mathfrak p_\phi(p) \star (k_{t(p)}^{-1})\qquad (\dagger)$$ for all $p \in \mathcal P(\x)$.
    
    Furthermore, if $(k_\sigma)_{\sigma \in VX}$ defines a homotopy from $\phi$ to $\eta$, then $$\tilde \eta ([p\r) = [(k_{\sigma_0})]\cdot\tilde \phi ([p\r) \text{ for all } [p\r \in V \tilde X_{\sigma_0} \quad \text{and} \quad \eta_{\ast,\sigma_0} = c_{[(k_{\sigma_0})]} \circ \phi_{\ast,\sigma_0},$$
    where $[(k_{\sigma_0})] \in \pi_1(\y,\tau_0)$ is the homotopy class of the $\y$-loop $(k_{\sigma_0})$ of length 0.
\end{prop}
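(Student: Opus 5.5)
The plan is to reduce $(\dagger)$ to the generators of $\mathcal P(\x)$ under concatenation: the length-zero paths $(g)$ with $g\in G_\sigma$, and the elementary paths $(1,a,1)$ and $(1,a^{-1},1)$ for $a\in EX$. The key observation is that the right-hand side of $(\dagger)$ is multiplicative. If $p=p_1\star p_2$, then
$$(k_{i(p_1)})\star\mathfrak p_\phi(p_1)\star(k_{t(p_1)}^{-1})\star(k_{i(p_2)})\star\mathfrak p_\phi(p_2)\star(k_{t(p_2)}^{-1})$$
collapses to $(k_{i(p)})\star\mathfrak p_\phi(p)\star(k_{t(p)}^{-1})$, because $t(p_1)=i(p_2)$ and the middle factors $(k^{-1}_{t(p_1)})\star(k_{i(p_2)})$ cancel inside a single local group. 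Since $\mathfrak p_\eta$ and $\mathfrak p_\phi$ are multiplicative by construction, and homotopy is compatible with $\star$, $(\dagger)$ holds for all $p$ as soon as it holds for the generators.

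For the direction ``homotopy $\Rightarrow(\dagger)$'' I check each generator separately.
\begin{aufzii}
\item For $(g)$, condition (ii) of Definition~\ref{defihomotopyofmorphisms} gives $(\eta_\sigma(g))=(k_\sigma\phi_\sigma(g)k_\sigma^{-1})$ directly.
\item For $(1,a,1)$ I need
$$(1,f(a),\eta(a)^{-1})\sim(k_{i(a)},f(a),\phi(a)^{-1}k_{t(a)}^{-1}).$$
An elementary edge slide (Ia) moves $k_{i(a)}$ across $f(a)$ as $\psi_{f(a)}(k_{i(a)})$. The identity then reduces to $\eta(a)^{-1}=\psi_{f(a)}(k_{i(a)})\phi(a)^{-1}k_{t(a)}^{-1}$, which is exactly condition (iii).
\item The inverse edge $(1,a^{-1},1)$ follows by inverting the previous case, or by an analogous slide (Ib).
\end{aufzii}

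For the converse, apply $(\dagger)$ to $p=(g)$ to get $(\eta_\sigma(g))\sim(k_\sigma\phi_\sigma(g)k_\sigma^{-1})$. The main obstacle is to conclude equality in $G_{f(\sigma)}$ from homotopy of length-zero paths. I would argue as follows. For length-zero paths at a vertex $\tau$, homotopy means equality of the images in $\pi_1(\y,\tau)$. By developability, via Bridson--Haefliger 3.9 (the injectivity of $\iota^T_\sigma$ recalled above) and conjugating by a tree path $p_\tau$, the map $G_\tau\to\pi_1(\y,\tau)$ is injective. This gives (ii). Similarly, applying $(\dagger)$ to $(1,a,1)$ and using the edge slide as above yields
$$(1,f(a),\eta(a)^{-1})\sim(1,f(a),\psi_{f(a)}(k_{i(a)})\phi(a)^{-1}k_{t(a)}^{-1}).$$
Cancelling $(1,f(a)^{-1},1)\star(1,f(a),1)$ via (IIb) with $h=1$ leaves two homotopic length-zero paths, and injectivity again gives (iii).

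For the final statements, take $p\in\mathcal P_{\sigma_0}(\x)$. Then $(\dagger)$ gives $\mathfrak p_\eta(p)\sim(k_{\sigma_0})\star\mathfrak p_\phi(p)\star(k_{t(p)}^{-1})$. By Definition~\ref{deficoverequivalencerelation} the trailing factor $(k_{t(p)}^{-1})$ is absorbed in the $\equiv$-class, so $[\mathfrak p_\eta(p)\r=[(k_{\sigma_0})\star\mathfrak p_\phi(p)\r$. The right-hand side is $[(k_{\sigma_0})]\cdot\tilde\phi([p\r)$ under the action of Definition and Lemma~\ref{univcomplex}, with $(k_{\sigma_0})$ a loop at $\tau_0=f(\sigma_0)$. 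For a loop $p$ at $\sigma_0$, $(\dagger)$ reads $[\mathfrak p_\eta(p)]=[(k_{\sigma_0})][\mathfrak p_\phi(p)][(k_{\sigma_0})]^{-1}$, which is exactly $\eta_\ast=c_{[(k_{\sigma_0})]}\circ\phi_\ast$.
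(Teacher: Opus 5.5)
Your proof is correct and follows essentially the same route as the paper. You verify $(\dagger)$ on elementary pieces via edge slides and extend multiplicatively. You recover conditions (ii) and (iii) of the homotopy definition by applying $(\dagger)$ to length-zero and one-edge paths; the only cosmetic difference is that you use $(1,a,1)$ with a (IIb) cancellation, where the paper uses $(1,a^{-1},1)$ with $(1,f(a),1)$ appended. You then read off the action and conjugation formulas directly.

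One point is worth keeping. To pass from homotopy of length-zero paths to equality in the local group, you explicitly invoke the developability of $\y$: the map $G_\tau \to \pi_1(\y,\tau)$ is injective via $\iota^T_\tau$. The paper merely asserts this step, and it genuinely needs developability, so your justification improves on the paper's argument.
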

\begin{proof}
Suppose $(k_\sigma)_{\sigma \in VX}$ defines a homotopy from $\phi$ to $\eta$. We want to first observe how $\mathfrak p_\phi$ and $\mathfrak p_\eta$ relate to each other.
Suppose that $a \in EX$ such that $i(a) = \sigma$ and $t(a) = \tau$. First, we consider an $\x$-path $p = (g,a,h)$ with $g \in G_{\sigma}, h \in G_\tau$. Then 
\begin{align*}
    \mathfrak p_\eta (p) &= (\eta_{\sigma}(g),f(a),\eta(a)^{-1} \phi_\tau(h)) \\
    &= (k_{\sigma}\phi_{\sigma}(g)k_{\sigma}^{-1},f(a),\psi_{f(a)}(k_{\sigma})\phi(a)^{-1}k_\tau^{-1}k_\tau\phi_\tau(h) k_\tau^{-1}) \\
    &\sim (k_{\sigma}\phi_{\sigma}(g),f(a),\phi(a)^{-1}\phi_\tau(h) k_\tau^{-1}) \\
    &= (k_{\sigma}) \star  (\phi_{\sigma}(g),f(a),\phi(a)^{-1}\phi_\tau(h)) \star (k_\tau^{-1}) \\
    &= (k_{\sigma}) \star \mathfrak p_\phi(p) \star (k_\tau^{-1}),
\end{align*}
where we performed an inverse elementary edge slide.
Now consider an edge $b \in EX$ such that $t(b) = \sigma, i(b) = \rho$ and an $\x$-path $q = (g,b^{-1},h)$ with $g \in G_{\sigma}, h \in G_{\rho}$. Then similar calculations yield $\mathfrak p_\eta (q) \sim (k_{\sigma}) \star \mathfrak p_\phi(q) \star (k_\tau^{-1})$.
Inductively, we obtain $\mathfrak p_\eta(p) \sim (k_{i(p)}) \star \mathfrak p_\phi(p) \star (k_{t(p)}^{-1})$ for all $p \in \mathcal P(\x).$

      For the other direction, suppose that we have found elements $(k_\sigma)_{\sigma \in VX}$ that suffice ($\dagger$). We need to check two conditions.
    \begin{aufzi}
        \item Let $\sigma \in VX$ and $g \in G_\sigma$. We compute
        $$(\eta_\sigma(g)) = \mathfrak p_\eta((g)) \sim (k_\sigma) \star \mathfrak p_\phi((g)) \star (k_\sigma^{-1})= (k_\sigma \cdot \phi_\sigma(g) \cdot k_\sigma^{-1}),$$ which yields $\eta_\sigma(g) = k_\sigma \phi_\sigma(g)k_\sigma^{-1}$ since homotopy of $\y$-paths of length 0 requires that the group elements are equal as elements of $G_{f(\sigma)}$. As a result, $\eta_\sigma = c_{k_\sigma} \circ \phi_\sigma$.
        \item Let $a \in EX$, then
        \begin{align*}
            (\eta(a)) &= (\eta(a),f(a)^{-1},1)\star(1,f(a),1) = \mathfrak p_\eta((1,a^{-1},1))\star(1,f(a),1) \\
            &\sim (k_{t(a)})\star \mathfrak p_\phi((1,a^{-1},1))\star (k_{i(a)}^{-1})\star (1,f(a),1) \\
            &=  (k_{t(a)})\star(\phi(a),f(a)^{-1},1)\star (k_{i(a)}^{-1})\star (1,f(a),1) \\
            &= (k_{t(a)}\phi(a)) \star (1,f(a)^{-1},k_{i(a)}^{-1},f(a),1) \\
            &\sim (k_{t(a)}\phi(a)\psi_{f(a)}(k_{i(a)}^{-1})),
        \end{align*}
        where we performed an elementary edge slide.
        Thus, $\eta(a) = k_{t(a)}\phi(a)\psi_{f(a)}(k_{i(a)}^{-1})$.
    \end{aufzi}
    The second claim follows immediately. Using $(\dagger)$, we compute 
    $$
    [(k_{\sigma_0})] \cdot \tilde \phi([p\r) = [(k_{\sigma_0})] \cdot[\mathfrak p_\phi(p)\r = [(k_{\sigma_0}) \star\mathfrak p_\phi(p) \star (k_{t(p)}^{-1})\r =[\mathfrak p_\eta(p)\r = \tilde \eta([p\r) \quad \text{for all $[p\r \in V\tilde X_{\sigma_0}$}
    $$
    and 
    $$
    \eta_{\ast,\sigma_0}([p]) = [\mathfrak p_\eta(p)] = (k_{\sigma_0})\star[\mathfrak p_\phi(p)] \star (k_{\sigma_0}^{-1}) = c_{[(k_{\sigma_0})]} \circ \phi_{\ast,\sigma_0}([p]) \quad \text{for all $[p] \in \pi_1(\x,\sigma_0)$}.
    $$
\end{proof}
\begin{cor}
    \label{samemaponcover}
    Let $\phi, \eta: (\mathbb X, \sigma_0) \to (\mathbb Y, \tau_0)$ be two morphisms of developable complexes of groups over a morphism $f : X \to Y$. Then $\phi$ and $\eta$ are homotopic relative $\sigma_0$ if and only if
    \begin{aufzii}
        \item the induced homomorphisms $\phi_\ast,\eta_\ast : \pi_1(\x,\sigma_0) \to \pi_1(\y,\sigma_0)$ coincide, i.e. $\phi_\ast = \eta_\ast$ and
        \item the induced morphisms on the level of universal complexes $\tilde X_{\sigma_0}, \tilde Y_{f(\sigma_0)}$ coincide, i.e. $\tilde \phi = \tilde \eta$.
    \end{aufzii}
\end{cor}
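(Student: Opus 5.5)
The plan is to deduce both directions from Proposition~\ref{characthomotopy}, using Remark~\ref{universalcomplexremark}-type bookkeeping and the fact that homotopy classes of $\x$-paths form a groupoid under $\star$ (concatenation is well-defined on classes, and $p\star p^{-1}\sim(1_{i(p)})$).

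\emph{Forward direction.} Suppose $(k_\sigma)_{\sigma\in VX}$ is a homotopy from $\phi$ to $\eta$ with $k_{\sigma_0}=1$. Proposition~\ref{characthomotopy} gives $\eta_{\ast,\sigma_0}=c_{[(1)]}\circ\phi_{\ast,\sigma_0}=\phi_{\ast,\sigma_0}$, which is (i). It also gives $\tilde\eta([p\r)=[(1)]\cdot\tilde\phi([p\r)=\tilde\phi([p\r)$ for every vertex. For edges, an edge $([p\r,a,[p\star(1,a,1)\r)$ is mapped by $\tilde\phi$ to an edge labelled $f(a)$. That image edge issues from $\tilde\phi([p\r)$, because $\tilde\phi$ is the extension from Corollary and Definition~\ref{maponcover}. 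An edge of $\tilde Y_{\tau_0}$ is determined by its initial vertex and its label. Since $\tilde\phi$ and $\tilde\eta$ agree on vertices and both lie over $f$, they agree on edges. This gives (ii).

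\emph{Backward direction.} Assume (i) and (ii). Fix a maximal tree $T$ and the paths $p_\sigma\in\mathcal P_{\sigma_0}^{\sigma}(\x)$ as in Proposition~\ref{samescwol}; note $p_{\sigma_0}=(1_{\sigma_0})$. By (ii), $[\mathfrak p_\eta(p_\sigma)\r=[\mathfrak p_\phi(p_\sigma)\r$. By definition of $\equiv$, this means there is $k_\sigma\in G_{f(\sigma)}$ with
\[
\mathfrak p_\eta(p_\sigma)\sim\mathfrak p_\phi(p_\sigma)\star(k_\sigma^{-1}).
\]
For $\sigma_0$ both images are $(1)$, so we may and do take $k_{\sigma_0}=1$. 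In fact $k_{\sigma_0}$ is forced: $(1)\sim(k^{-1})$ for length-$0$ paths forces $k=1$.

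Now let $p\in\mathcal P_\sigma^\tau(\x)$ be arbitrary and form the loop $L:=p_\sigma\star p\star p_\tau^{-1}$ at $\sigma_0$. Since $\mathfrak p_\phi$ is multiplicative and satisfies $\mathfrak p_\phi(q^{-1})=\mathfrak p_\phi(q)^{-1}$ (check this on elementary paths, directly from the definition in Definition and Lemma~\ref{fungrhom}), hypothesis (i) yields
\[
\mathfrak p_\eta(p_\sigma)\star\mathfrak p_\eta(p)\star\mathfrak p_\eta(p_\tau)^{-1}\sim\mathfrak p_\phi(p_\sigma)\star\mathfrak p_\phi(p)\star\mathfrak p_\phi(p_\tau)^{-1}.
\]
Substitute the relations defining $k_\sigma$ and $k_\tau$, and cancel $\mathfrak p_\phi(p_\sigma)$ on the left and $\mathfrak p_\phi(p_\tau)^{-1}$ on the right in the groupoid of homotopy classes. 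This gives
\[
(k_\sigma^{-1})\star\mathfrak p_\eta(p)\star(k_\tau)\sim\mathfrak p_\phi(p),
\]
which is exactly $(\dagger)$. Proposition~\ref{characthomotopy} then says that $(k_\sigma)_{\sigma\in VX}$ is a homotopy from $\phi$ to $\eta$, and it is relative $\sigma_0$ because $k_{\sigma_0}=1$.

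The main obstacle is the extraction step: turning the equality of $\equiv$-classes in (ii) into an honest homotopy $\mathfrak p_\eta(p_\sigma)\sim\mathfrak p_\phi(p_\sigma)\star(k_\sigma^{-1})$ with the correct orientation conventions. This also requires checking that $\mathfrak p_\phi$ commutes with path inversion, so that the cancellation in the loop computation is legitimate. Once those two points are in place, everything else is formal.
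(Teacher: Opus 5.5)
Your proposal is correct and is essentially the paper's argument. You choose the tree paths $p_\sigma$, read off $k_\sigma$ from $[\mathfrak p_\eta(p_\sigma)\r=[\mathfrak p_\phi(p_\sigma)\r$, use $\phi_\ast=\eta_\ast$ on loops at $\sigma_0$ to obtain $(\dagger)$ for every path, and conclude with Proposition~\ref{characthomotopy}; the forward direction likewise comes straight from that proposition.

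The differences are cosmetic. Your single loop $p_\sigma\star p\star p_\tau^{-1}$ treats all paths at once, where the paper first handles paths from and to $\sigma_0$. You read off $k_{\sigma_0}=1$ directly from $p_{\sigma_0}=(1_{\sigma_0})$, which tacitly uses that $G_{\tau_0}\to\pi_1(\y,\tau_0)$ is injective, i.e.\ that $\y$ is developable. You also spell out why agreement on vertices forces agreement on edges.
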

\begin{proof}
    First suppose that $\phi$ and $\eta$ are homotopic relative $\sigma_0$. Then, by Proposition~\ref{characthomotopy} (i) and (ii) are satisfied.
    
    For the converse suppose that $\phi$ and $\eta$ are defined with respect to the same morphism $f: X \to Y$ and suffice (i) and (ii). Fix a maximal subtree $T$ in $X$ as before and define the $\x$--paths $p_\sigma \in \mathcal P_{\sigma_0}^{\sigma}(\x)$ for all $\sigma \in VX$ such that all group elements are trivial and the underlying path is the unique path connecting $\sigma_0$ with $\sigma$ in $T$. Since $[\mathfrak p_\phi(p)\r =\tilde \phi([p_\sigma\r) = \tilde \eta([p_\sigma\r)=[\mathfrak p_\eta(p)\r$ for all $\sigma \in VX$ there exists a family of elements $(k_\sigma)_{\sigma \in VX}$ with $k_\sigma \in G_{f(\sigma)}$ for all $\sigma$ such that
    $$
    \mathfrak p_\phi(p_\sigma) \sim \mathfrak p_\eta(p_\sigma) \star (k_\sigma^{-1}) \qquad \text{for all $\sigma \in VX$}.
    $$
    
    Now, let $p \in \mathcal P_{\sigma_0}^{\sigma}(\x)$ be arbitrary. Then $[p \star p_\sigma^{-1}] \in \pi_1(\x,\sigma_0)$. Since $\phi_\ast = \eta_\ast$, we obtain $\mathfrak p_\phi(p \star p_\sigma^{-1}) \sim \mathfrak p_\eta(p \star p_\sigma^{-1})$. This yields $$\mathfrak p_\phi(p) \star \mathfrak p_\phi(p_\sigma^{-1}) \sim \mathfrak p_\eta(p) \star \mathfrak p_\eta(p_\sigma^{-1}) \sim  \mathfrak p_\eta(p) \star (k_\sigma^{-1})\star \mathfrak p_\phi(p_\sigma^{-1}).$$ Thus, $\mathfrak p_\phi(p) \sim \mathfrak p_\eta(p) \star (k_\sigma^{-1})$. We can use similar arguments when considering an $\x$--path $p \in \mathcal P_\sigma^{\sigma_0}(\x)$ and obtain $\mathfrak p_\phi(p) \sim (k_\sigma) \star \mathfrak p_\eta(p)$. In conclusion, we obtain that $$\mathfrak p_\phi(p) \sim (k_{i(p)}) \star \mathfrak p_\eta(p) \star (k_{t(p)}^{-1}) \qquad \text{for all $p \in \mathcal P(\x)$.}$$ Therefore, the family $(k_\sigma)_{\sigma \in VX}$ defines a homotopy from $\eta$ to $\phi$. Finally, since $\phi_\ast = \eta_\ast$ we have $\mathfrak p_\phi(p) \sim \mathfrak p_\eta(p)$ for all $p \in \mathcal P_{\sigma_0}^{\sigma_0}(\x)$, which yields $k_{\sigma_0} = 1$. Hence, $\eta \sim_{\sigma_0} \phi$.
\end{proof}
We can now apply Corollary~\ref{samemaponcover} to derive an important result relating two coverings of developable complexes of groups with the same characteristic subgroup by an isomorphism up to homotopy relative a base vertex. This result will be instrumental in the characterization of the group of deck transformations.
\begin{lem}
    \label{twocoverings}
    Let $\phi^i:(\x^i,\sigma_i)\to(\y,\tau), i=1,2,$ be coverings of developable complexes of groups such that 
    $$
    \phi^1_\ast(\pi_1(\x^1,\sigma_1)) = \phi^2_\ast(\pi_1(\x^2,\sigma_2)) \leq \pi_1(\y,\tau).
    $$ 
    Then there exists an isomorphism $\eta : \x^1 \to \x^2$ such that $\phi^2 \circ \eta \sim_{\sigma_1} \phi^1$.
\end{lem}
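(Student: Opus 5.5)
Set $G := \pi_1(\y,\tau)$ and $U := \phi^1_\ast(\pi_1(\x^1,\sigma_1)) = \phi^2_\ast(\pi_1(\x^2,\sigma_2))$. The plan is to build $\eta$ at the level of universal complexes and then descend to the quotients.

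\textbf{Step 1: identify both universal complexes with $\tilde Y_\tau$.} By Lemma~\ref{lemcoveringisom}, the maps $\tilde\phi^i : \tilde X^i_{\sigma_i} \to \tilde Y_\tau$ are isomorphisms of scwols. By Corollary and Definition~\ref{maponcover} they are $\phi^i_\ast$-equivariant, and by Definition and Lemma~\ref{deficharactsubgroup} each $\phi^i_\ast$ is injective onto $U$. Hence $\Phi := (\tilde\phi^2)^{-1}\circ\tilde\phi^1 : \tilde X^1_{\sigma_1}\to\tilde X^2_{\sigma_2}$ is an isomorphism of scwols. It is equivariant with respect to the group isomorphism $\theta := (\phi^2_\ast)^{-1}\circ\phi^1_\ast : \pi_1(\x^1,\sigma_1)\to\pi_1(\x^2,\sigma_2)$, and it sends $[1_{\sigma_1}\r$ to $[1_{\sigma_2}\r$.

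\textbf{Step 2: descend to complexes of groups.} By Proposition~\ref{samescwol}, each $\x^i$ is isomorphic to the complex of groups induced by $\pi_1(\x^i,\sigma_i)\curvearrowright\tilde X^i_{\sigma_i}$. A $\theta$-equivariant isomorphism of scwols with actions induces an isomorphism of the induced complexes of groups \cite[Chap.~III.$\mathcal C$, 2.9]{bridson2013metric}. Concretely:
\begin{itemize}
\item $\Phi$ descends to an isomorphism $h : X^1 \to X^2$ of quotient scwols, with $f_2\circ h = f_1$ (here $f_i : X^i \to Y$ is the morphism underlying $\phi^i$).
\item Fix the strict fundamental domain $\{[p_\sigma\r\}$ in $\tilde X^1_{\sigma_1}$ from a maximal tree. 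For each $\sigma$, choose $g_\sigma\in\pi_1(\x^2,\sigma_2)$ with $g_\sigma\cdot\Phi([p_\sigma\r) = [p_{h(\sigma)}\r$, taking $g_{\sigma_1}=1$.
\item Define the local maps as $c_{g_\sigma}\circ\theta$ restricted to the stabilisers, and the edge elements from the elements $g_{t(a)}\,\theta(h_a)\,g_{i(a)}^{-1}h_{h(a)}^{-1}$.
\end{itemize}
Composing with the isomorphisms of Proposition~\ref{samescwol} gives an isomorphism $\eta:\x^1\to\x^2$ over $h$. The choice $g_{\sigma_1}=1$ ensures $\eta_\ast = \theta$ and $\tilde\eta = \Phi$. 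Alternatively, one can verify $\tilde\eta([p\r) = [\mathfrak p_\eta(p)\r = \Phi([p\r)$ directly on the fundamental domain, then extend by equivariance and connectedness.

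\textbf{Step 3: conclude with Corollary~\ref{samemaponcover}.} Both $\phi^2\circ\eta$ and $\phi^1$ lie over $f_2\circ h = f_1$, so the Corollary applies. Functoriality, namely $(\phi^2\circ\eta)_\ast=\phi^2_\ast\circ\eta_\ast$ and $\widetilde{\phi^2\circ\eta} = \tilde\phi^2\circ\tilde\eta$, follows from $\mathfrak p_{\phi^2\circ\eta}(p)\sim\mathfrak p_{\phi^2}(\mathfrak p_\eta(p))$. This identity is a direct check on elementary paths using the composition formula for edge elements. It gives
\[
(\phi^2\circ\eta)_\ast=\phi^2_\ast\circ\theta=\phi^1_\ast,
\qquad
\widetilde{\phi^2\circ\eta}=\tilde\phi^2\circ\Phi=\tilde\phi^1.
\]
By Corollary~\ref{samemaponcover}, $\phi^2\circ\eta\sim_{\sigma_1}\phi^1$.

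\textbf{Main obstacle.} The main difficulty is Step 2. One must produce an honest morphism $\eta$ of complexes of groups, with edge elements satisfying the cocycle condition (ii), whose induced maps on $\pi_1$ and on universal complexes are exactly $\theta$ and $\Phi$ and not merely conjugates of them. I would handle the cocycle condition by invoking the general correspondence of \cite[Chap.~III.$\mathcal C$, 2.9--2.11]{bridson2013metric} between equivariant morphisms and morphisms of induced complexes of groups, rather than verifying it by hand. The normalisation $g_{\sigma_1}=1$ is what removes the conjugation ambiguity described in Proposition~\ref{characthomotopy}.
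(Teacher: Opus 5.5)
Your proposal is correct and follows essentially the same route as the paper. The paper likewise forms $(\tilde\phi^2)^{-1}\circ\tilde\phi^1$ with the isomorphism $(\phi^2_\ast)^{-1}\circ\phi^1_\ast$, descends it through the fundamental-domain description of Proposition~\ref{samescwol} using the same local maps and edge elements with the normalisation $g_{\sigma_1}=1$, and concludes with Corollary~\ref{samemaponcover}. The only differences are that you cite \cite[Chap.~III.$\mathcal C$, 2.9]{bridson2013metric} for the morphism axioms where the paper checks them by hand, and that the representatives you write as $[p_{h(\sigma)}\r$ should come from a fundamental domain in $\tilde X^2_{\sigma_2}$ (a maximal tree in $X^2$), not in $\tilde X^1_{\sigma_1}$.
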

\begin{proof}
    By~\ref{lemcoveringisom} the coverings induce $\phi^i_\ast$--equivariant isomorphisms $\tilde \phi^i:\tilde X^i_{\sigma_i}\to \tilde Y_{\tau}$ such that $[1_{\sigma_i}\r \mapsto [1_\tau \r$ for $i = 1,2$. Thus, 
    $$
    \tilde \eta:=(\tilde \phi^2)^{-1}\circ\tilde \phi^1 : \tilde X^1_{\sigma_1} \to \tilde X^2_{\sigma_2}
    $$
    is an isomorphism that is equivariant with respect to the group isomorphism
    $$
    \eta_\ast := (\phi^2_\ast)^{-1} \circ \phi^1_\ast :\pi_1(\x^1,\sigma_1) \to \pi_1(\x^2,\sigma_2).
    $$
    
We sketch how to construct an isomorphism $\phi : \x^1 \to \x^2$ such that $\tilde \phi = \tilde \eta$ and $\phi_\ast = \eta_\ast$, using the language from section~\ref{section3}. Note, that this also follows from \cite[Chap.~III.$\mathcal C$~2.9]{bridson2013metric}.

First observe that the $\eta_\ast$--equivariant isomorphism $\tilde \eta$ induces a morphism $l : X^1 \to X^2$ such that $l(\sigma_1) = \sigma_2$. This will serve as the underlying morphism of $\phi$. Recalling the proof of Proposition~\ref{samescwol}, fix maximal trees $T^1 \subset X^1$ and $T^2 \subset X^2$ and sets of representatives 
$$
\tilde V^1 = \{[p_\sigma \r: \sigma \in VX^1\} \subset V\tilde X^1_{\sigma_1}, \qquad \tilde V^2 = \{[q_\tau\r:\tau \in VX^2\} \subset V\tilde X^2_{\sigma_2},
$$
given by the actions of the fundamental groups on the universal complexes. Using these representatives we identify $\x^1$ and $\x^2$ with the induced complexes of groups. 

Choose $r_\sigma \in \mathcal P_{\sigma_2}(\x^2)$ such that $[r_\sigma\r := \tilde \eta([p_\sigma\r)$ for all $[p_\sigma\r \in \tilde V^1$. For every $\sigma \in VX^1$ we define 
$$
g_\sigma := [q_{l(\sigma)} \star r_\sigma^{-1}] \in \pi_1(\x^2,\sigma_2),
$$
which yields $g_\sigma \cdot [r_\sigma\r = [q_{l(\sigma)}\r$.
Note that $g_{\sigma_1} = 1$, since $\tilde \eta([1_{\sigma_1}\r) = [1_{\sigma_2}\r$. 

We define the local homomorphisms as 
$$
\phi_\sigma :\{[p_\sigma] \star (g)\star [p_\sigma^{-1}]: g \in G_\sigma \}  \to \{[q_{l(\sigma)}] \star (k)\star [q_{l(\sigma)}^{-1}]:k \in G_{l(\sigma)}\}
$$
such that 
$$[p_\sigma] \star (g) \star [p_\sigma^{-1}] \mapsto [q_{l(\sigma)} \star r_\sigma^{-1}] \star \eta_\ast
([p_\sigma] \star (g) \star [p_\sigma^{-1}]) \star [r_\sigma \star q_{l(\sigma)}^{-1} ].
$$
Since $\eta_\ast$ is an isomorphism, it follows that each $\phi_\sigma$ is an isomorphism.

Recalling the elements 
$$
h_a=[p_{t(a)}\star(1,a^{-1},1)\star p_{i(a)}^{-1}]
\quad (a\in EX^1) \quad \text{and}
\quad
h_b=[q_{t(b)}\star(1,b^{-1},1)\star q_{i(b)}^{-1}]
\quad (b\in EX^2),
$$
from the proof of Proposition~\ref{samescwol}, we set
$$\phi(a) := g_{t(a)} \eta_\ast(h_a)g_{i(a)}^{-1}h_{l(a)}^{-1} \quad \text{for all $a \in EX^1.$}$$
Using the $\eta_\ast$--equivariance of $\tilde\eta$, we compute for $a\in EX^1$
 \begin{align*}
 &\phi(a) \cdot [q_{l(t(a))}\r \\
 &= [q_{l(t(a))} \star r_{t(a)}^{-1}]\eta_\ast([p_{t(a)} \star (1,a^{-1},1) \star p_{i(a)}^{-1}])[r_{i(a)}\star q_{l(i(a))}^{-1}][q_{l(i(a))}\star (1,l(a),1)\star q_{l(t(a))}^{-1}]\cdot[q_{l(t(a))}\r \\
  &= [q_{l(t(a))} \star r_{t(a)}^{-1}]\eta_\ast([p_{t(a)} \star (1,a^{-1},1) \star p_{i(a)}^{-1}])[r_{i(a)} \star (1,l(a),1)\r \\
 &= [q_{l(t(a))} \star r_{t(a)}^{-1}] \cdot \tilde \eta([p_{t(a)} \star (1,a^{-1},1) \star p_{i(a)}^{-1}]\cdot [p_{i(a)} \star (1,a,1)\r] \\
  &= [q_{l(t(a))} \star r_{t(a)}^{-1}] \cdot \tilde \eta([p_{t(a)}\r \\
  &= [q_{l(t(a))}\r.
 \end{align*}
Thus, $\phi(a) \in G_{t(l(a))}$ for all $a \in EX^1$.

We now verify that $(l,(\phi_\sigma)_{\sigma \in VX^1},(\phi(a))_{a \in EX^1})$ defines an isomorphism $\phi : (\x^1,\sigma_1) \to (\x^2,\sigma_2)$. 
Since the boundary monomorphisms satisfy $\psi_a = c_{h_a}$ for all $a \in EX^1$ and $\psi_b = c_{h_b}$ for all $b \in EX^2$, we compute for $a \in EX^1$ and $g \in G_{i(a)}$
\begin{align*}
    c_{\phi(a)} \circ \psi_{l(a)} \circ \phi_{i(a)}(g) &= \left (g_{t(a)}\eta_\ast(h_a)g_{i(a)}^{-1}h_{l(a)}^{-1} \right )h_{l(a)}g_{i(a)} \cdot \eta_\ast(g) \cdot g_{i(a)}^{-1}h_{l(a)}^{-1}  \left (h_{l(a)}g_{i(a)}\eta_\ast(h_a^{-1})g_{t(a)}^{-1} \right ) \\
    &= g_{t(a)} \eta_\ast(h_a) \eta_\ast(g) \eta_\ast(h_a^{-1})g_{t(a)}^{-1} \\
    &= \phi_{t(a)} \circ \psi_a(g).
\end{align*}
Moreover, since $g_{a,b} = h_ah_bh_{ab}^{-1}$ for all $(a,b) \in E^2X^1$ and $g_{c,d} = h_ch_dh_{cd}^{-1}$ for all $(c,d) \in E^2X^2$, we compute for $(a,b) \in E^2X^1$
\begin{align*}
    &\phi(a)\psi_{l(a)}(\phi(b))g_{l(a),l(b)} \\
    &= \left ( g_{t(a)} \eta_\ast(h_a)g_{i(a)}^{-1}h_{l(a)}^{-1} \right ) \left ( h_{l(a)} g_{t(b)} \eta_\ast(h_b) g_{i(b)}^{-1} h_{l(b)}^{-1}h_{l(a)}^{-1} \right ) \left (h_{l(a)}h_{l(b)}h_{l(a)l(b)}^{-1} \right ) \\
    &= g_{t(a)} \eta_\ast(h_a)\eta_\ast(h_b)g_{i(b)}^{-1}h_{l(a)l(b)}^{-1} \\
    &= g_{t(a)} \eta_\ast(h_ah_b) g_{i(ab)}^{-1} h_{l(ab)}^{-1} \\
    &= \left ( g_{t(a)} \eta_\ast(h_ah_bh_{ab}^{-1})g_{t(a)}^{-1} \right) \left (g_{t(ab)}\eta_\ast(h_{ab})g_{i(ab)}^{-1}h_{l(ab)}^{-1} \right ) \\
    &= \phi_{t(a)}(g_{a,b})\phi(ab).
\end{align*}
Thus $(l,(\phi_\sigma),(\phi(a)))$ defines an isomorphism $\phi:\x^1\to\x^2$. By construction we have
$$
\phi_\ast=c_{g_{\sigma_1}}\circ\eta_\ast,
\qquad
\tilde\phi([p\r)=g_{\sigma_1}\cdot\tilde\eta([p\r).
$$
Since we chose the lifts such that $g_{\sigma_1}=1$, the desired equality follows.

    Finally, we obtain an induced isomorphism $\eta : (\x^1,\sigma_1) \to (\x^2,\sigma_2)$ satisfying
    $$
        \widetilde{\phi^2 \circ \eta}= \phi^2 \circ \tilde \eta = \tilde \phi^1 \quad \text{and} \quad (\phi^2 \circ \eta)_\ast = \phi^2_\ast \circ \eta_\ast  = \phi^1_\ast.
    $$
    Thus, $\phi^2 \circ \eta \sim_{\sigma_1} \phi^1$ by Corollary~\ref{samemaponcover}.
\end{proof}
\section{Deck Transformations}
\label{section5}
In this section we define deck transformations in the category of developable complexes of groups. Much of the material in this section is inspired by Eric Henack's doctoral dissertation \cite{henack2018separability} which includes a discussion of the group of deck transformations of graphs of groups.

\begin{defi}[Deck transformation]
     Let $\phi: \x \to \y$ be a covering. We call the set $$\hbox{Deck}(\phi) := \{[\eta]: \eta \in \hbox{Aut}(\x) \text{ and }\phi  \sim \phi \circ \eta \}$$ the set of deck transformations of $\phi$.
\end{defi}
In \cite[2.8]{delgado2025pullbacks}, morphisms in the category of graphs of groups are considered only up to homotopy. In this framework, the deck transformations defined above may be viewed as elements of $\mathrm{A ut}(\mathbb A)$, where $\mathbb A$ denotes the covering graph of groups.
\begin{rem}
  Suppose that $h : X \to X$ is the morphism underlying the deck transformation $[\eta]$, and that $f : X \to Y$ is the morphism underlying the covering $\phi$. Then the condition $\phi \sim \phi \circ \eta$ implies that $f \circ h = f$. In particular, $h$ permutes the fibres over $Y$. While this property is known to hold for deck transformations in classical covering theory, $f$ need not be a covering of scwols (see \cite[Chap.~III.$\mathcal C$~5.1]{bridson2013metric} for a simple one-dimensional counterexample), and hence $h$ is not necessarily a deck transformation in the classical sense.
\end{rem}
The following Lemma together with Lemma~\ref{lemhomotopyrespectscomposition} immediately imply that $\hbox{Deck}(\phi)$ carries a natural group structure.

\begin{lem}
    Let $\phi : \x \to \y$ be an isomorphism of developable complexes of groups over an isomorphism $f : X \to Y$. Then there exists a unique morphism $\eta : \y \to \x$ over $f^{-1}$ such that $$\eta \circ \phi =\hbox{id}_\x \quad \text{and} \quad \phi \circ \eta = \hbox{id}_\y,$$
    where $\hbox{id}_\x : \x \to \x$ denotes the morphism over $\hbox{id}_X$ defined by the data $((\hbox{id}_{G_\sigma})_{\sigma \in VX},(1_{G_{f(t(a))}})_{a \in EX}).$
\end{lem}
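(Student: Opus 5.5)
We construct the inverse explicitly and check the two composition identities directly from the composition formulas. Write $\phi = (f,(\phi_\sigma)_{\sigma\in VX},(\phi(a))_{a\in EX})$; each $\phi_\sigma : G_\sigma \to G_{f(\sigma)}$ is an isomorphism and $f$ is an isomorphism of scwols. We define $\eta$ over $f^{-1}$ by the local homomorphisms
$$
\eta_\tau := \phi_{f^{-1}(\tau)}^{-1} : G_\tau \to G_{f^{-1}(\tau)} \qquad (\tau \in VY)
$$
and the edge elements
$$
\eta(b) := \phi_{f^{-1}(t(b))}^{-1}\bigl(\phi(f^{-1}(b))\bigr)^{-1} \in G_{f^{-1}(t(b))} \qquad (b \in EY).
$$
This choice is forced by requiring $(\eta\circ\phi)(a) = \eta_{f(t(a))}(\phi(a))\,\eta(f(a)) = 1$, which also gives uniqueness.

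First we check that $\eta$ is a morphism of complexes of groups. For condition (i), write $b=f(a)$ and apply $\phi_{t(a)}^{-1}$ to the identity $c_{\phi(a)}\circ\psi_{f(a)}\circ\phi_{i(a)} = \phi_{t(a)}\circ\psi_a$. Rearranged, this gives $\psi_a\circ\phi_{i(a)}^{-1} = c_{\phi_{t(a)}^{-1}(\phi(a))^{-1}}^{-1}\circ\dots$, which is exactly $c_{\eta(b)}\circ\psi_{a}\circ\eta_{i(b)} = \eta_{t(b)}\circ\psi_{b}$. For condition (ii), apply $\phi_{t(a)}^{-1}$ to the cocycle identity $\phi_{t(a)}(g_{a,a'})\phi(aa') = \phi(a)\psi_{f(a)}(\phi(a'))g_{f(a),f(a')}$. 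Then use (i) for $\phi$ to rewrite $\phi_{t(a)}^{-1}\circ\psi_{f(a)} = c^{-1}\ldots$, turning $\phi_{t(a)}^{-1}(\psi_{f(a)}(\phi(a')))$ into $\eta(f(a))^{-1}\psi_a(\phi_{i(a)}^{-1}\phi(a'))\eta(f(a))$. The result rearranges into $\eta_{t(b)}(g_{b,b'})\eta(bb') = \eta(b)\psi_{f^{-1}(b)}(\eta(b'))g_{f^{-1}(b),f^{-1}(b')}$. This bookkeeping with inverses and conjugations is the main obstacle. I would organize it by proving (i) first and then substituting it into (ii), rather than expanding everything at once.

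Next we verify the compositions. For $\eta\circ\phi$, the local maps are $\eta_{f(\sigma)}\circ\phi_\sigma = \mathrm{id}$ and the edge elements are $1$ by construction. For $\phi\circ\eta$, the local maps are the identity, and the edge element at $b$ is $\phi_{f^{-1}(t(b))}(\eta(b))\,\phi(f^{-1}(b)) = \phi(f^{-1}(b))^{-1}\phi(f^{-1}(b)) = 1$.

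Finally, uniqueness. Suppose $\eta'$ over $f^{-1}$ also satisfies $\eta'\circ\phi=\mathrm{id}_\x$. Then $\eta'_{f(\sigma)}=\phi_\sigma^{-1}$, and the edge condition $\eta'_{f(t(a))}(\phi(a))\eta'(f(a))=1$ determines $\eta'(f(a))$. Since $f$ is bijective, these values determine all of $\eta'$, so $\eta' = \eta$.
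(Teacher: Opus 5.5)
Your proposal is correct and is essentially the paper's proof. You use the same explicit inverse $\eta_\tau=\phi_{f^{-1}(\tau)}^{-1}$, $\eta(b)=\phi^{-1}_{f^{-1}(t(b))}\bigl(\phi(f^{-1}(b))^{-1}\bigr)$, the same composition checks, and uniqueness by forcing the data. For the edge elements you use $\eta'\circ\phi=\mathrm{id}_\x$, whereas the paper uses $\phi\circ\rho=\mathrm{id}_\y$; either works.

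There is one slip in your sketch of axiom (ii), which the paper leaves to the reader anyway. By your own (i), $\phi_{t(a)}^{-1}\bigl(\psi_{f(a)}(x)\bigr)=\eta(f(a))\,\psi_a\bigl(\phi_{i(a)}^{-1}(x)\bigr)\,\eta(f(a))^{-1}$, so the conjugation goes the other way from what you wrote. Only with this direction does the identity rearrange to $\eta_{t(b)}(g_{b,b'})\eta(bb')=\eta(b)\psi_{f^{-1}(b)}(\eta(b'))g_{f^{-1}(b),f^{-1}(b')}$.
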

\begin{proof}
    Let $\phi = ((\phi_\sigma)_{\sigma \in VX},(\phi(a))_{a \in EX})$. We first prove existence. Since all local homomorphisms $\phi_\sigma$ are isomorphisms, we can define homomorphisms $\eta_{\tau} := \phi_{f^{-1}(\tau)}^{-1}: G_{\tau} \to G_{f^{-1}(\tau)}$ for all $\tau \in VY$. Furthermore, for all $b \in EY$ we define $\eta(b) := \phi^{-1}_{f^{-1}(t(b))}(\phi(f^{-1}(b))^{-1}) \in G_{f^{-1}(t(b))}$. Then one can use the definition of a composition of morphisms of complexes of groups to check that $\eta = (f^{-1},(\eta_\tau)_\tau, (\eta(b))_b)$ defines an isomorphism $\y \to \x$ with the desired properties.

    To prove uniqueness, suppose that $\rho: \y \to \x$ is a morphism of complexes of groups over $f^{-1}$ such that $\rho \circ \phi = \hbox{id}_\x$ and $\phi \circ \rho = \hbox{id}_\y$. Then $$\hbox{id}_{G_\sigma} = (\rho \circ \phi)_\sigma = \rho_{f(\sigma)} \circ \phi_\sigma, \quad \text{for all $\sigma \in VX$},$$ hence $\rho_{f(\sigma)} = \phi_\sigma^{-1} = \eta_{f(\sigma)}$. Since $f$ is an isomorphism, we obtain $\rho_\tau = \eta_\tau$ for all $\tau \in VY$. Now, let $b \in EY$. Since $\phi \circ \rho = \hbox{id}_\y$, we obtain $$1_{G_{t(b)}} = (\phi \circ \eta)(b) = \phi_{f^{-1}({t(b)})}(\rho(b)) \cdot \phi(f^{-1}(b))$$ which is equivalent to $$\rho(b) = \phi_{f^{-1}({t(b)})}^{-1}\left (\phi(f^{-1}(b))^{-1}\right ) = \eta(b).$$ Hence, $\rho(b) = \eta(b)$ for all $b \in EY$.
\end{proof}
\begin{cor}
\label{defideck}
   Let $\phi : \x \to \y$ be a covering of developable complexes of groups, then $(\hbox{Deck}(\phi),\circ)$ is a group. We will usually simply write $\hbox{Deck}(\phi)$.
\end{cor}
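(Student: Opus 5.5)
The statement is Corollary~\ref{defideck}: $(\hbox{Deck}(\phi),\circ)$ is a group. We check the group axioms on homotopy classes, using the composition $[\eta]\circ[\eta'] := [\eta'\circ\eta]$ from Lemma~\ref{lemhomotopyrespectscomposition}. Since the group is $\hbox{Deck}(\phi)$, we work with $\u=\x=\y$ there, all equal to the covering complex $\x$.

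\textbf{Closure.} Let $[\eta],[\eta']\in\hbox{Deck}(\phi)$, so $\eta,\eta'\in\hbox{Aut}(\x)$, $\phi\sim\phi\circ\eta$ and $\phi\sim\phi\circ\eta'$. A composition of isomorphisms is again an isomorphism: the underlying morphism is an isomorphism of scwols, and the local homomorphisms $\eta'_{h(\sigma)}\circ\eta_\sigma$ are isomorphisms. For the homotopy condition, associativity of composition of morphisms of complexes of groups is a direct computation from the composition formulas. Lemma~\ref{lemhomotopyrespectscomposition} shows that composing with a fixed morphism preserves homotopy. Hence
$$\phi\circ(\eta'\circ\eta)=(\phi\circ\eta')\circ\eta\sim\phi\circ\eta\sim\phi.$$
The middle step composes the homotopy $\phi\circ\eta'\sim\phi$ with $\eta$ on the right, which is allowed by the lemma since homotopy is reflexive. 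Homotopy is transitive, because a homotopy $(k_\sigma)$ followed by $(l_\sigma)$ gives the homotopy $(l_\sigma k_\sigma)$. So the product lies in $\hbox{Deck}(\phi)$. The lemma also guarantees that the product does not depend on the chosen representatives.

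\textbf{Associativity and identity.} Associativity on classes follows from associativity of composition of morphisms. The class $[\hbox{id}_\x]$ is a two-sided identity, since $\eta\circ\hbox{id}_\x=\eta=\hbox{id}_\x\circ\eta$ directly from the composition formulas.

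\textbf{Inverses.} For $\eta\in\hbox{Aut}(\x)$, the preceding Lemma gives an inverse isomorphism $\eta^{-1}$ with $\eta\circ\eta^{-1}=\hbox{id}_\x=\eta^{-1}\circ\eta$. Then
$$\phi\circ\eta^{-1}\sim(\phi\circ\eta)\circ\eta^{-1}=\phi\circ(\eta\circ\eta^{-1})=\phi.$$
Here the homotopy $\phi\sim\phi\circ\eta$ is composed on the right with $\eta^{-1}$, again by Lemma~\ref{lemhomotopyrespectscomposition}. Hence $[\eta^{-1}]\in\hbox{Deck}(\phi)$, and it is inverse to $[\eta]$.

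The only point needing care is checking associativity of composition of morphisms on the edge elements. We have
$$\bigl((\theta\circ\eta)\circ\rho\bigr)(a)=\theta_{\cdot}\bigl(\eta_{\cdot}(\rho(a))\bigr)\,\theta_{\cdot}(\eta(\cdot))\,\theta(\cdot),$$
which agrees with $\bigl(\theta\circ(\eta\circ\rho)\bigr)(a)$ because each $\theta_\sigma$ is a homomorphism. With that, the verification is routine.
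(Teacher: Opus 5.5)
Your proof is correct and follows the same route as the paper, which obtains the corollary directly from Lemma~\ref{lemhomotopyrespectscomposition} (compatibility of homotopy with composition) together with the preceding lemma providing inverses of isomorphisms of complexes of groups. You additionally make explicit what the paper leaves implicit: closure of $\hbox{Deck}(\phi)$ via transitivity of homotopy, associativity of composition on edge elements, and the membership of $[\hbox{id}_\x]$ and $[\eta^{-1}]$ in $\hbox{Deck}(\phi)$.
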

\begin{notation}
    Let $\phi: \x \to \y$ be a morphism of complexes of groups over a morphism $f : X \to Y$, $\sigma_0 \in VX$, and $g \in G_{f(\sigma_0)}$. With $\phi^{(k_{\sigma_0} = g)}$ we denote the morphism $\x \to \y$ such that $(k_\sigma)_{\sigma \in VX}$ defines the homotopy from $\phi$ to $\phi^{(k_{\sigma_0} = g)}$, where $k_\sigma = 1$ for all $\sigma \in VX-\{\sigma_0\}$ and $k_{\sigma_0} = g$.
\end{notation}
For deck transformations, we immediately obtain the following reformulation of Proposition~\ref{characthomotopy}, which will be useful in the technical arguments that follow.
\begin{lem}
    \label{lemma1}
    Let $\phi: \x \to \y$ be a covering of developable complexes of groups over a morphism $f: X \to Y$ and let $[\eta] \in \hbox{Deck}(\phi)$ be a deck transformation over a morphism of scwols $h : X \to X$. 
    Then there exists a family of elements $(k^\eta_\sigma)_{\sigma \in VX}$ with $k^\eta_\sigma \in G_{f(\sigma)} = G_{f\circ h (\sigma)}$ for all $\sigma \in VX$ such that the following hold:
    \begin{aufzii}
        \item $\mathfrak p_\phi \circ \mathfrak p_\eta(p) \sim (k^\eta_{i(h(p))}) \star \mathfrak p_\phi(p) \star((k^\eta_{t(h(p))})^{-1})$ for all $p \in \mathcal P(\x)$.
        \item Let $\sigma_0 \in VX$, then $\phi^{(k_{\sigma_0} =k^\eta_{h(\sigma_0)})}$ is homotopic to $\phi \circ \eta$ relative $\sigma_0$.
    \end{aufzii}
\end{lem}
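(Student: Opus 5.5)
Since $[\eta]\in\hbox{Deck}(\phi)$, we have $\phi\sim\phi\circ\eta$. First we record how $\mathfrak p$ behaves under composition: directly from the definition of the composition of morphisms, $\mathfrak p_{\phi\circ\eta}(p)\sim\mathfrak p_\phi(\mathfrak p_\eta(p))$ for every $p\in\mathcal P(\x)$. It suffices to check this on the generating paths $(g)$, $(1,a,1)$ and $(1,a^{-1},1)$, since both sides are multiplicative under concatenation. The cases $(g)$ agree on the nose. For $(1,a,1)$ the left side is $(1,f(h(a)),(\phi\circ\eta)(a)^{-1})$ with $(\phi\circ\eta)(a)=\phi_{h(t(a))}(\eta(a))\phi(h(a))$. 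The right side is
$$\mathfrak p_\phi\bigl((1,h(a),\eta(a)^{-1})\bigr)=(1,f(h(a)),\phi(h(a))^{-1}\phi_{h(t(a))}(\eta(a))^{-1}),$$
and these agree. The inverse edges are handled symmetrically. Note also that $f\circ h=f$ (see the remark after the definition of $\hbox{Deck}(\phi)$), so $\phi$ and $\phi\circ\eta$ are morphisms over the same scwol morphism $f$, and Proposition~\ref{characthomotopy} applies to them.

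For (i), choose a family $(m_\sigma)_{\sigma\in VX}$ with $m_\sigma\in G_{f(\sigma)}$ defining a homotopy from $\phi$ to $\phi\circ\eta$. Proposition~\ref{characthomotopy} gives
$$\mathfrak p_\phi(\mathfrak p_\eta(p))\sim\mathfrak p_{\phi\circ\eta}(p)\sim(m_{i(p)})\star\mathfrak p_\phi(p)\star(m_{t(p)}^{-1}).$$
We then re-index by setting $k^\eta_\sigma:=m_{h^{-1}(\sigma)}$. This is legitimate because $h$ is an automorphism of $X$. It lies in $G_{f(h^{-1}(\sigma))}=G_{f(\sigma)}$, again by $f\circ h=f$. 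With this choice $k^\eta_{i(h(p))}=m_{i(p)}$ and $k^\eta_{t(h(p))}=m_{t(p)}$, which is exactly (i). The only point needing care is this bookkeeping of indices and of which local group each element lives in.

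For (ii), fix $\sigma_0$ and put $g:=k^\eta_{h(\sigma_0)}=m_{\sigma_0}$. We compare $\psi:=\phi^{(k_{\sigma_0}=g)}$ with $\phi\circ\eta$. The family $(k_\sigma)$ that is $g$ at $\sigma_0$ and $1$ elsewhere is a homotopy from $\phi$ to $\psi$, so its inverse family is a homotopy from $\psi$ to $\phi$. Composing this with $(m_\sigma)$ yields the homotopy $(m_\sigma k_\sigma^{-1})_\sigma$ from $\psi$ to $\phi\circ\eta$; one checks directly from conditions (ii) and (iii) of Definition~\ref{defihomotopyofmorphisms} that homotopies compose by pointwise multiplication in this order. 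At $\sigma_0$ this family takes the value $m_{\sigma_0}g^{-1}=1$, so $\psi\sim_{\sigma_0}\phi\circ\eta$.

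Alternatively, (ii) follows from Corollary~\ref{samemaponcover}. By Proposition~\ref{characthomotopy}, $\psi_\ast=c_{[(g)]}\circ\phi_\ast=(\phi\circ\eta)_\ast$ and $\tilde\psi=[(g)]\cdot\tilde\phi=\widetilde{\phi\circ\eta}$. I expect no real obstacle. The step most likely to hide an error is verifying $\mathfrak p_{\phi\circ\eta}\sim\mathfrak p_\phi\circ\mathfrak p_\eta$ on inverse edges, together with tracking the index shift by $h$.
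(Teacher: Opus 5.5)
Your proof is correct. Part (i) is the paper's argument: take a homotopy from $\phi$ to $\phi\circ\eta$, re-index it by $h^{-1}$, and apply Proposition~\ref{characthomotopy}. You also check explicitly two facts the paper uses without comment: that $\mathfrak p_{\phi\circ\eta}=\mathfrak p_\phi\circ\mathfrak p_\eta$, which in fact holds on the nose, and that $f\circ h=f$.

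For (ii), your main argument takes a different route. The paper uses (i) to show $\mathfrak p_\phi\circ\mathfrak p_\eta(p)\sim\mathfrak p_\theta(p)$ for $\x$-loops $p$ at $\sigma_0$. It thus compares induced maps and implicitly relies on Corollary~\ref{samemaponcover}. You instead compose the homotopy families directly. This gives the explicit family $(m_\sigma k_\sigma^{-1})_\sigma$ from $\phi^{(k_{\sigma_0}=m_{\sigma_0})}$ to $\phi\circ\eta$, whose value at $\sigma_0$ is $1$.

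Your route is purely algebraic and uses neither developability nor universal complexes. It also exhibits the relative homotopy outright. The paper's route fits the path-level framework of Section~\ref{section4}. However, the computation it displays, on loops, only gives $\theta_\ast=(\phi\circ\eta)_\ast$. The second hypothesis of Corollary~\ref{samemaponcover}, $\tilde\theta=\widetilde{\phi\circ\eta}$, needs the same computation for arbitrary $p\in\mathcal P_{\sigma_0}(\x)$, with the terminal group element absorbed by $\equiv$. Your alternative argument via Corollary~\ref{samemaponcover} states both conditions explicitly, so it is the complete form of the paper's approach.
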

 \begin{proof}
\begin{aufzii}

 \item Let $(s_\sigma)_{\sigma \in VX}$ be the family that defines the homotopy from $\phi$ to $\phi \circ \eta$. Then the family $(k^\eta_\sigma)_{\sigma \in VX}$ with $k^\eta_\sigma := s_{h^{-1}(\sigma)}$ satisfies $$(k^\eta_{i(h(p))}) \star \mathfrak p_\phi(p) \star((k^\eta_{t(h(p))})^{-1}) = (s_{i(p)}) \star \mathfrak p_\phi(p) \star((s_{t(p)})^{-1}) \sim \mathfrak p_\phi \circ \mathfrak p_\eta(p)$$ for all $p \in \mathcal P(\x)$ by Proposition~\ref{characthomotopy}.

 \item Set $\theta := \phi^{(k_{\sigma_0} =k^\eta_{h(\sigma_0)})}$ and let $\sigma_0 \in VX$ and $p \in \mathcal P_{\sigma_0}^{\sigma_0}(\x)$. Using (i), we compute
 \begin{align*}
     \mathfrak p_\phi \circ \mathfrak p_\eta (p) &\sim (k^\eta_{h(\sigma_0)}) \star \mathfrak p_\phi(p) \star((k^\eta_{h(\sigma_0)})^{-1}) \\ &\sim \left (k^\eta_{h(\sigma_0)} \right) \star \left ((k^\eta_{h(\sigma_0)})^{-1}\right )\star \mathfrak p_\theta(p) \star \left (k^\eta_{h(\sigma_0)} \right) \star \left ((k^\eta_{h(\sigma_0)})^{-1}\right ) \\
     &= \mathfrak p_\theta(p),
 \end{align*}
given that $\mathfrak p_\theta(p) \sim (k^\eta_{h(\sigma_0)}) \star \mathfrak p_\phi(p) \star ((k^\eta_{h(\sigma_0)})^{-1})$.
 \end{aufzii}
 \end{proof}

\begin{lem}
     \label{compositionlemma}
     Let $\phi : \x \to \y$ be a covering of developable complexes of groups and let $[\eta^1], [\eta^2] \in \hbox{Deck}(\phi)$ be two deck transformations over morphisms $h^1,h^2:X \to X$, respectively. Suppose that $\sigma_0 \in VX$ is a fixed basepoint, and set $\sigma_i := h^i(\sigma_0)$ for $i = 1,2$, and $\sigma_{1,2} := h^1\circ h^2(\sigma_0)$. Let furthermore $(k_\sigma^{\eta^i})_{\sigma \in VX}$ be the families of elements from Lemma~\ref{lemma1} for $i = 1,2$. Then there exists a homotopy $(k_\sigma)_{\sigma \in VX}$ from $\phi^{(s_{\sigma_0} =k_{\sigma_{1,2}}^{\eta^1}k_{\sigma_2}^{\eta^2})}$ to $\phi \circ \eta^1 \circ \eta^2$ such that $k_{\sigma_0} = 1$.
\end{lem}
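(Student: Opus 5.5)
Set $\theta := \phi^{(s_{\sigma_0} =k_{\sigma_{1,2}}^{\eta^1}k_{\sigma_2}^{\eta^2})}$. I will prove the statement through Corollary~\ref{samemaponcover}, not by writing down the family $(k_\sigma)$ directly. Since $\theta$ and $\phi\circ\eta^1\circ\eta^2$ both lie over $f$, it suffices to show two equalities: $\theta_{\ast,\sigma_0} = (\phi\circ\eta^1\circ\eta^2)_{\ast,\sigma_0}$ as maps $\pi_1(\x,\sigma_0)\to\pi_1(\y,f(\sigma_0))$, and $\tilde\theta = \widetilde{\phi\circ\eta^1\circ\eta^2}$ on $\tilde X_{\sigma_0}$. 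Corollary~\ref{samemaponcover} then gives a homotopy relative $\sigma_0$, i.e. a family $(k_\sigma)$ with $k_{\sigma_0}=1$. (Implicitly, $f\circ h^1=f\circ h^2=f$, so $f(\sigma_1)=f(\sigma_2)=f(\sigma_{1,2})=f(\sigma_0)$ and the elements involved all lie in $G_{f(\sigma_0)}$.)

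\textbf{Step 1: a path-level formula.} I will show that for every $p\in\mathcal P(\x)$,
$$\mathfrak p_\phi\circ\mathfrak p_{\eta^1}\circ\mathfrak p_{\eta^2}(p)\sim (k^{\eta^1}_{i(h^1h^2(p))}k^{\eta^2}_{i(h^2(p))})\star\mathfrak p_\phi(p)\star(k^{\eta^1}_{t(h^1h^2(p))}k^{\eta^2}_{t(h^2(p))})^{-1}.$$
First apply Lemma~\ref{lemma1}(i) for $\eta^1$ to the path $\mathfrak p_{\eta^2}(p)$, whose underlying path is $h^2(p)$. This gives $(k^{\eta^1}_{i(h^1h^2(p))})\star\mathfrak p_\phi(\mathfrak p_{\eta^2}(p))\star(\cdots)^{-1}$. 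Then apply Lemma~\ref{lemma1}(i) for $\eta^2$ to $p$ inside this expression. Homotopy is compatible with concatenation, so the length-$0$ pieces combine by multiplication in $G_{f(\sigma)}$. Here I also use $\mathfrak p_{\phi\circ\eta}=\mathfrak p_\phi\circ\mathfrak p_\eta$ up to homotopy. This follows from the composition formula for edge elements, since $(\phi\circ\eta)(a)=\phi_{\cdot}(\eta(a))\phi(h(a))$ matches $\mathfrak p_\phi(\mathfrak p_\eta((1,a,1)))=(1,f(a),\phi(h(a))^{-1}\phi(\eta(a))^{-1})$.

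\textbf{Step 2: compare with $\theta$.} By Proposition~\ref{characthomotopy}, $\mathfrak p_\theta(p)\sim (s_{i(p)})\star\mathfrak p_\phi(p)\star(s_{t(p)}^{-1})$, where $s$ equals $g:=k^{\eta^1}_{\sigma_{1,2}}k^{\eta^2}_{\sigma_2}$ at $\sigma_0$ and is trivial elsewhere. For a loop $p$ at $\sigma_0$, Step 1 gives exactly $(g)\star\mathfrak p_\phi(p)\star(g^{-1})\sim\mathfrak p_\theta(p)$, so the induced maps on $\pi_1$ agree. For the universal complexes, take $p\in\mathcal P_{\sigma_0}(\x)$. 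The right factor $(\cdot)^{-1}$ in Step 1 is absorbed in the class $[\cdot\r$, and so is the factor $(s_{t(p)}^{-1})$ for $\theta$. Both sides therefore equal $[g]\cdot[\mathfrak p_\phi(p)\r$, hence $\tilde\theta=\widetilde{\phi\circ\eta^1\circ\eta^2}$ on vertices, and on edges by Corollary and Definition~\ref{maponcover}.

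\textbf{Main obstacle.} The delicate point is the index bookkeeping in Step 1. In Lemma~\ref{lemma1} the families are indexed through $h$: $k^\eta_\sigma=s_{h^{-1}(\sigma)}$. I must check that applying the lemma for $\eta^1$ to a path with underlying path $h^2(p)$ indeed produces the indices $h^1h^2(i(p))=\sigma_{1,2}$ and $h^2(i(p))=\sigma_2$ when $i(p)=\sigma_0$, in the order stated. The other point needing care is that homotopy classes of length-$0$ paths multiply correctly across $\star$. This is clear from the definition of concatenation together with the fact that homotopy is a congruence for $\star$.
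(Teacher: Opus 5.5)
Your proposal is correct and follows the paper's route: you apply Lemma~\ref{lemma1}(i) twice, first for $\eta^1$ on $\mathfrak p_{\eta^2}(p)$ and then for $\eta^2$ on $p$, to get the path-level formula for $p \in \mathcal P_{\sigma_0}(\x)$, and from it you read off the homotopy relative $\sigma_0$. Your Step~2 compares with $\mathfrak p_\theta$ via Proposition~\ref{characthomotopy} and checks both conditions of Corollary~\ref{samemaponcover}; this spells out the final step that the paper compresses into ``therefore $\phi^{(s_{\sigma_0}=\dots)}$ has the desired property.''
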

\begin{proof}
Let $p \in \mathcal P_{\sigma_0}(\x)$ and set $\tau_{1,2} := t(\mathfrak p_{\eta^1}\circ \mathfrak p_{\eta^2}(p))$ and $\tau_2 := t(\mathfrak p_{\eta^2}(p))$. By Lemma~\ref{lemma1} 
 \begin{align*}
     \mathfrak p_{\phi \circ \eta^1 \circ \eta^2}(p) &= \mathfrak p_\phi \circ \mathfrak p_{\eta^1} \circ \mathfrak p_{\eta^2}(p) \\
     &\sim k_{\sigma_{1,2}}^{\eta^1}\mathfrak p_\phi(\mathfrak p_{\eta^2}(p))(k_{\tau_{1,2}}^{\eta^1})^{-1} \\
     &\sim k_{\sigma_{1,2}}^{\eta^1}k_{\sigma_2}^{\eta^2} \mathfrak p_\phi(p)(k_{\tau_2}^{\eta^2})^{-1}(k_{\tau_{1,2}}^{\eta^1})^{-1}.
 \end{align*}
 Therefore, $\phi^{(s_{\sigma_0} =k_{\sigma_{1,2}}^{\eta^1}k_{\sigma_2}^{\eta^2})}$ has the desired property.
\end{proof}

\subsection{Proof of Main Theorem 1.1}

In classical covering space theory, let $f : (S,s_0) \to (T,t_0)$ be a covering of path-connected spaces. Changing the basepoint from $s_0$ to a point $s_1 \in f^{-1}(t_0)$ corresponds to conjugating the characteristic subgroup 
$$
U = f_\ast(\pi_1(S,s_0)) \leq \pi_1(T,t_0) = G
$$
by an element $[p] \in \pi_1(T,t_0)$, where $p$ lifts to a path $\tilde p$ in $S$ joining $s_0$ to $s_1$. Consequently, the normalizer $N_G(U)$ consists precisely of those homotopy classes of loops $p$ based at $t_0$ whose lifts $\tilde p$ satisfy
$$
f_\ast(\pi_1(S,s_0)) = f_\ast(\pi_1(S,t(\tilde p))).
$$
By the lifting criterion, this condition is equivalent to the existence of a deck transformation sending $s_0$ to $t(\tilde p)$.

In the following we derive analogous statements for a covering $\phi : (\x,\sigma_0) \to (\y,\tau_0)$ of developable complexes of groups. In contrast to the classical situation, two additional phenomena occur. First, elements $g \in G_{\tau_0}$ may still conjugate the characteristic subgroup. Second, some elements of $\pi_1(\y,\tau_0)$ act trivially on the universal complex $\tilde Y_{\tau_0}$. As a result, the group of deck transformations is no longer determined by $N_G(U)/U$ alone but instead arises from a suitable quotient of the normalizer that accounts for these additional phenomena.

\smallskip

For the remainder of this subsection let $\phi : (\x,\sigma_0) \to (\y,\tau_0)$ be a covering of developable complexes of groups over a morphism $f : X \to Y$, and set $G := \pi_1(\y,\tau_0)$ and $U := \phi_\ast(\pi_1(\x,\sigma_0)) \leq G$. Furthermore, given an element $g \in G_{\tau_0}$ we will generally not distinguish between $g$ and the element $[(g)] \in \pi_1(\y,\tau_0)$.

\begin{lem}
    \label{basepointcor}
     Suppose that there exists an element $g \in G_{\tau_0}$ such that $$gUg^{-1} = \phi_{\ast,\sigma_1}(\pi_1(\x,\sigma_1))$$ for some vertex $\sigma_1$ in the fibre over $\tau_0$. Then, the following hold:
    \begin{aufzii}
        \item There exists an automorphism $\eta: (\x,\sigma_0) \to (\x,\sigma_1)$ such that $\phi \circ \eta \sim_{\sigma_0}\phi^{(k_{\sigma_0} = g)}$. In particular, $[\eta] \in \hbox{Deck}(\phi)$.
        \item Given two automorphisms $\eta^1,\eta^2:(\x,\sigma_0) \to (\x,\sigma_1)$ such that $\phi \circ \eta^i \sim_{\sigma_0}\phi^{(k_{\sigma_0} = g)}$ for $i = 1,2$, we have $\eta^1 \sim_{\sigma_0}\eta^2$.
    \end{aufzii}
\end{lem}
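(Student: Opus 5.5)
For (i), the plan is to reduce to Lemma~\ref{twocoverings}. We view $\phi$ as two pointed coverings of $(\y,\tau_0)$. The first is $\phi^1 := \phi^{(k_{\sigma_0}=g)} : (\x,\sigma_0)\to(\y,\tau_0)$. The second is $\phi^2 := \phi : (\x,\sigma_1)\to(\y,\tau_0)$, which makes sense because $f(\sigma_1)=\tau_0$.

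First we check that $\phi^1$ is again a covering. It has the same underlying morphism $f$. Its local homomorphisms $c_{g}\circ\phi_{\sigma_0}$ and $\phi_\sigma$ are still injective. Its edge elements change only by left multiplication by $g$ at edges terminating in $\sigma_0$, and by right multiplication by $\psi_{f(a)}(g^{-1})$ at edges issuing from $\sigma_0$. Both changes simply reindex the coset maps in condition~(ii) by the bijections $x\psi_a(G)\mapsto gx\psi_a(G)$, so bijectivity is preserved.

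Next we compute characteristic subgroups. By Proposition~\ref{characthomotopy}, $\phi^1_{\ast,\sigma_0}=c_{g}\circ\phi_{\ast,\sigma_0}$, so $\phi^1_\ast(\pi_1(\x,\sigma_0))=gUg^{-1}$. By hypothesis this equals $\phi_{\ast,\sigma_1}(\pi_1(\x,\sigma_1))=\phi^2_\ast(\pi_1(\x,\sigma_1))$. Lemma~\ref{twocoverings} then gives an isomorphism $\eta:(\x,\sigma_0)\to(\x,\sigma_1)$ with $\phi\circ\eta=\phi^2\circ\eta\sim_{\sigma_0}\phi^1=\phi^{(k_{\sigma_0}=g)}$. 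Since $\phi^{(k_{\sigma_0}=g)}\sim\phi$ by definition, transitivity of homotopy gives $\phi\circ\eta\sim\phi$, so $[\eta]\in\hbox{Deck}(\phi)$.

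One point needs care here. The paper writes $\sim_{\sigma_0}$ for morphisms that may be over different scwol morphisms, but homotopy is only defined over a common $f$. Here $\phi\circ\eta$ lies over $f\circ h$ and $\phi^1$ lies over $f$. So we must check $f\circ h=f$, as in the earlier remark; this comes out of the construction in Lemma~\ref{twocoverings}, because $\tilde\eta$ commutes with the projections to $\tilde Y_{\tau_0}$.

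For (ii), we apply Corollary~\ref{samemaponcover} to $\eta^1,\eta^2$, viewed as morphisms $(\x,\sigma_0)\to(\x,\sigma_1)$. First we check that $\eta^1$ and $\eta^2$ lie over the same scwol morphism. Corollary~\ref{samemaponcover} applied to $\phi\circ\eta^1\sim_{\sigma_0}\phi\circ\eta^2$ gives
\[
(\phi\circ\eta^1)_\ast=(\phi\circ\eta^2)_\ast \quad\text{and}\quad \widetilde{\phi\circ\eta^1}=\widetilde{\phi\circ\eta^2}.
\]
Since $\widetilde{\phi\circ\eta^i}=\tilde\phi\circ\tilde\eta^i$ and $\phi_\ast\circ\eta^i_\ast$ respectively, and $\tilde\phi$ is an isomorphism (Lemma~\ref{lemcoveringisom}) while $\phi_{\ast,\sigma_1}$ is injective (Definition and Lemma~\ref{deficharactsubgroup}), we can cancel to get $\tilde\eta^1=\tilde\eta^2$ and $\eta^1_\ast=\eta^2_\ast$. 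Equal maps on universal complexes induce equal maps on the quotient scwol $X$, so $\eta^1$ and $\eta^2$ lie over the same $h$. Corollary~\ref{samemaponcover} in the reverse direction then yields $\eta^1\sim_{\sigma_0}\eta^2$.

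The main obstacle is the functoriality identities $\widetilde{\phi\circ\eta}=\tilde\phi\circ\tilde\eta$ and $(\phi\circ\eta)_\ast=\phi_\ast\circ\eta_\ast$, which the paper uses without proof. To check them, we verify that $\mathfrak p_{\phi\circ\eta}(p)\sim\mathfrak p_\phi(\mathfrak p_\eta(p))$ on the elementary paths $(g)$, $(1,a,1)$ and $(1,a^{-1},1)$, using the composition formula $(\phi\circ\eta)(a)=\phi_{h(t(a))}(\eta(a))\phi(h(a))$ together with one edge slide.
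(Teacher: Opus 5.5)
Your proposal is correct and follows the paper's proof essentially verbatim: for (i) you combine $\phi^{(k_{\sigma_0}=g)}_{\ast,\sigma_0}=c_g\circ\phi_{\ast,\sigma_0}$ from Proposition~\ref{characthomotopy} with Lemma~\ref{twocoverings}, and for (ii) you apply Corollary~\ref{samemaponcover}, cancel $\tilde\phi$ and $\phi_{\ast,\sigma_1}$ by injectivity, and apply Corollary~\ref{samemaponcover} again. Your additional checks only make explicit what the paper leaves implicit: that $\phi^{(k_{\sigma_0}=g)}$ is again a covering, that the underlying scwol morphisms agree, and functoriality (which in fact holds on the nose as $\mathfrak p_{\phi\circ\eta}=\mathfrak p_\phi\circ\mathfrak p_\eta$, with no edge slide needed).
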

\begin{proof}
    Proposition~\ref{characthomotopy} yields $\phi^{(k_{\sigma_0} = g)}_{\ast,\sigma_0}= c_g \circ \phi_{\ast,\sigma_0}$. Hence, \begin{align*}
        \phi^{(k_{\sigma_0} = g)}_{\ast,\sigma_0}(\pi_1(\x,\sigma_0)) = g\phi_{\ast,\sigma_0}(\pi_1(\x,\sigma_0))g^{-1} = g U g^{-1}    = \phi_{\ast,\sigma_1}(\pi_1(\x,\sigma_1)).
    \end{align*} 
    Thus, by Lemma~\ref{twocoverings} there exists an automorphism $\eta$ as claimed in~(i). 
    
    Now suppose that we are given the two morphisms $\eta^1$ and $\eta^2$ as in~(ii). Then, Corollary~\ref{samemaponcover} implies that 
    $$
    \widetilde{\phi \circ \eta^1} = \widetilde{\phi \circ \eta^2} = \tilde \phi^{(k_{\sigma_0} = g)} \quad \text{and} \quad (\phi \circ \eta^1)_{\ast,\sigma_0} = (\phi\circ \eta^2)_{\ast,\sigma_0} = \phi^{(k_{\sigma_0} = g)}_{\ast,\sigma_0}.
    $$
    Since $\phi$ is a covering, by Lemma~\ref{lemcoveringisom} and Definition and Lemma~\ref{deficharactsubgroup} the maps $\tilde \phi$ and $\phi_\ast$ are injective. Therefore, $\tilde \eta^1 = \tilde \eta^2$ and $\eta^1_{\ast,\sigma_0} = \eta^2_{\ast,\sigma_0}$. Applying Corollary~\ref{samemaponcover} once more, we conclude that $\eta^1 \sim_{\sigma_0}\eta^2$.
\end{proof}

\begin{lem}
\label{lemmafornormalizer}
    For all $[q] \in \pi_1(\y,\tau_0)$ there exist an $\x$--path $p \in \mathcal P_{\sigma_0}(\x)$ and an element $g \in G_{\tau_0}$ such that 
    $$
    [q] = [\mathfrak p_\phi(p) \star (g)].
    $$
    In particular, $t(p) \in f^{-1}(\{\tau_0\}).$
\end{lem}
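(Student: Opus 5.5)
The plan is to use the isomorphism $\tilde\phi:\tilde X_{\sigma_0}\to\tilde Y_{\tau_0}$ from Lemma~\ref{lemcoveringisom}, which is surjective on vertices. Given $[q]\in\pi_1(\y,\tau_0)$, I regard $q$ as an element of $\mathcal P_{\tau_0}(\y)$; it defines the vertex $[q\r\in V\tilde Y_{\tau_0}$. By surjectivity there is a vertex $[p\r\in V\tilde X_{\sigma_0}$, with $p\in\mathcal P_{\sigma_0}(\x)$, such that
$$
\tilde\phi([p\r)=[\mathfrak p_\phi(p)\r=[q\r .
$$
By Definition~\ref{deficoverequivalencerelation}, equality of the $\equiv$-classes means there is some $g\in G_{t(\mathfrak p_\phi(p))}$ with $[q]=[\mathfrak p_\phi(p)\star(g)]$.

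It remains to place $g$ and $t(p)$ correctly. The underlying path of $\mathfrak p_\phi(p)$ is the image under $f$ of the underlying path of $p$, so $t(\mathfrak p_\phi(p))=f(t(p))$. Homotopic $\y$-paths share their terminal vertex, so $f(t(p))=t(q)=\tau_0$. Hence $g\in G_{\tau_0}$ and $t(p)\in f^{-1}(\{\tau_0\})$, which is exactly the claim.

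The only point needing care is that a vertex of $\tilde Y_{\tau_0}$ is an $\equiv$-class, so equality $[\mathfrak p_\phi(p)\r=[q\r$ gives $q\sim\mathfrak p_\phi(p)\star(g)$ only up to the terminal local-group element. That ambiguity is precisely the factor $(g)$ in the statement, so nothing beyond Lemma~\ref{lemcoveringisom} and the definitions is required.
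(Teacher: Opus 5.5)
Your proposal is correct and matches the paper's proof: you use surjectivity of the isomorphism $\tilde\phi$ from Lemma~\ref{lemcoveringisom} to find $p$ with $[\mathfrak p_\phi(p)\r = [q\r$, and then the definition of $\equiv$ to obtain $g$. Your explicit check that $t(\mathfrak p_\phi(p)) = f(t(p)) = t(q) = \tau_0$ supplies the small step the paper leaves implicit, namely that $g \in G_{\tau_0}$ and $t(p) \in f^{-1}(\{\tau_0\})$.
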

\begin{proof}
   Let $[q] \in \pi_1(\y,\tau_0)$. Since $\phi$ is a covering, $\tilde \phi : \tilde X_{\sigma_0} \to \tilde Y_{\tau_0}$ is an isomorphism. Hence, there exists $p \in \mathcal P_{\sigma_0}^\sigma(\x)$, where $\sigma \in f^{-1}(\{\tau_0\})$, such that 
    $$
    \tilde \phi([p \r) = [\mathfrak p_\phi(p) \r = [q\r.
    $$
    By the definition of the $\equiv$--equivalence relation \ref{deficoverequivalencerelation}, there exists $g \in G_{\tau_0}$ such that $[\mathfrak p_\phi(p) \star (g)] = [q]$. 
\end{proof}
The following proposition provides a key criterion relating deck transformations to the normalizer of the characteristic subgroup.
\begin{prop}
    \label{mainlemma1}
    Let $p \in \mathcal P_{\sigma_0}^{\sigma_1}(\x)$ with $\sigma_1 \in f^{-1}(\{\tau_0\})$ and $g \in G_{\tau_0}$.
    \begin{aufzii}
        \item There exists an automorphism $\eta:(\x,\sigma_0) \to (\x,\sigma_1)$ such that $\phi \circ \eta \sim_{\sigma_0}\phi^{(k_{\sigma_0}=g)}$ if and only if $[\mathfrak p_\phi(p)\star(g)]\in N_G(U)$.
        \item Set $K:=\ker(\pi_1(\y,\tau_0) \curvearrowright\tilde Y_{\tau_0})$. Then $\phi^{(k_{\sigma_0}=g)} \sim_{\sigma_0}\phi$ if and only if $g \in C_G(U) \cap K$.
    \end{aufzii}

\end{prop}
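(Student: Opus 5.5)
For part (i), the plan is to reduce everything to a statement about characteristic subgroups at different basepoints, so that Lemma~\ref{twocoverings} and Lemma~\ref{basepointcor} do the work. Write $r := \mathfrak p_\phi(p)\star(g)$, a $\y$-loop at $\tau_0$. The first step is a change-of-basepoint identity:
$$
\phi_{\ast,\sigma_1}(\pi_1(\x,\sigma_1)) = [\mathfrak p_\phi(p)]^{-1}\, U\, [\mathfrak p_\phi(p)].
$$
Here $[\mathfrak p_\phi(p)]$ is read as an element of $\pi_1(\y,\tau_0)$, which makes sense since $f(\sigma_1)=\tau_0$. This identity holds because $[\ell]\mapsto[p^{-1}\star\ell\star p]$ is an isomorphism $\pi_1(\x,\sigma_0)\to\pi_1(\x,\sigma_1)$ and $\mathfrak p_\phi$ is multiplicative with respect to concatenation.

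Next I would show that an automorphism $\eta:(\x,\sigma_0)\to(\x,\sigma_1)$ with $\phi\circ\eta\sim_{\sigma_0}\phi^{(k_{\sigma_0}=g)}$ exists if and only if
$$
gUg^{-1}=\phi_{\ast,\sigma_1}(\pi_1(\x,\sigma_1)).
$$
\begin{itemize}
\item Sufficiency is Lemma~\ref{basepointcor}(i).
\item For necessity, apply $(\cdot)_{\ast,\sigma_0}$ and Proposition~\ref{characthomotopy}. This gives $(\phi\circ\eta)_{\ast,\sigma_0}=c_g\circ\phi_{\ast,\sigma_0}$. Since $\eta$ is an isomorphism taking $\sigma_0$ to $\sigma_1$, the map $\eta_{\ast,\sigma_0}$ is an isomorphism onto $\pi_1(\x,\sigma_1)$, and $\phi_\ast\circ\eta_\ast$ has image $\phi_{\ast,\sigma_1}(\pi_1(\x,\sigma_1))$.
\end{itemize}
Combining the two displays, the condition becomes $gUg^{-1}=[\mathfrak p_\phi(p)]^{-1}U[\mathfrak p_\phi(p)]$, i.e.\ $[r]U[r]^{-1}=U$, which is exactly $[r]\in N_G(U)$. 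The step needing most care is the basepoint bookkeeping in the necessity direction, namely checking that $\eta_{\ast,\sigma_0}$ lands in $\pi_1(\x,\sigma_1)$ with image all of it. I would verify this using the explicit $\mathfrak p_\eta$ on loops, together with invertibility of $\eta$ from the lemma on inverses of isomorphisms.

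For part (ii), I would use Corollary~\ref{samemaponcover}: $\phi^{(k_{\sigma_0}=g)}\sim_{\sigma_0}\phi$ holds if and only if the induced homomorphisms and the induced maps on universal complexes agree. By Proposition~\ref{characthomotopy} (the family has $k_{\sigma_0}=g$), these are $c_g\circ\phi_\ast$ and $g\cdot\tilde\phi(\cdot)$. The two conditions then translate as follows.
\begin{itemize}
\item Equality of the homomorphisms, $c_g\circ\phi_\ast=\phi_\ast$, means $g$ centralizes $U$, i.e.\ $g\in C_G(U)$.
\item Equality on universal complexes means $g$ fixes every vertex in the image of $\tilde\phi$. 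By Lemma~\ref{lemcoveringisom} this image is all of $V\tilde Y_{\tau_0}$.
\end{itemize}
Since the action condition (ii) in the definition of a group action forces an element fixing $i(a)$ to fix $a$, fixing all vertices is the same as acting trivially. So the second condition is exactly $g\in K$. Both directions follow, and this part should be routine.
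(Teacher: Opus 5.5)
Your proposal is correct and follows essentially the same route as the paper. In (i), both arguments reduce the existence of $\eta$ to the condition $gUg^{-1}=\phi_{\ast,\sigma_1}(\pi_1(\x,\sigma_1))$, using Lemma~\ref{basepointcor} and the change-of-basepoint identity $\phi_{\ast,\sigma_1}(\pi_1(\x,\sigma_1))=[\mathfrak p_\phi(p)]^{-1}U[\mathfrak p_\phi(p)]$; in (ii), both combine Corollary~\ref{samemaponcover}, Proposition~\ref{characthomotopy} and the bijectivity of $\tilde\phi$ from Lemma~\ref{lemcoveringisom}, and your remark that fixing all vertices already forces trivial action on edges makes explicit a step the paper leaves implicit.
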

\begin{proof}
    \begin{aufzii}
        \item

        Assume first that $h :=[\mathfrak p_\phi(p)\star(g)] \in N_G(U)$. Then $h^{-1}Uh = U$ implies
       \begin{align*}
           gUg^{-1} &= gh^{-1}Uhg^{-1} \\ &= [\mathfrak p_\phi(p)]^{-1}U[\mathfrak p_\phi(p)] \\
          &=  \phi_{\ast,\sigma_1}([p]^{-1}\pi_1(\x,\sigma_0)[p]) \\ &=  \phi_{\ast,\sigma_1}(\pi_1(\x,\sigma_1)).
       \end{align*} Thus, by Lemma~\ref{basepointcor}, there exists a deck transformation $[\eta]$ with the desired property. 

       Conversely, suppose that there exists an automorphism $\eta:(\x,\sigma_0) \to (\x,\sigma_1)$ such that $\phi \circ \eta \sim_{\sigma_0} \phi^{(k_{\sigma_0}=g)}$. We compute
       \begin{align*}
           [\mathfrak p_\phi(p)]^{-1} U [\mathfrak p_\phi(p)] &= \phi_{\ast,\sigma_1}([p^{-1}]\pi_1(\x,\sigma_0)[p]) \\&= \phi_{\ast,\sigma_1}(\pi_1(\x,\sigma_1)) \\
           &= \phi_{\ast,\sigma_1}\circ \eta_{\ast,\sigma_0}\circ (\eta_{\ast,\sigma_0})^{-1}(\pi_1(\x,\sigma_1)) \\
           &= \phi_{\ast,\sigma_1}\circ \eta_{\ast,\sigma_0}\circ (\eta^{-1})_{\ast,\sigma_1}(\pi_1(\x,\sigma_1)) \\
           &= (\phi \circ \eta)_{\ast,\sigma_0}(\pi_1(\x,\sigma_0)) \\
           &= \phi^{(k_{\sigma_0}=g)}_{\ast,\sigma_0}(\pi_1(\x,\sigma_0)) \\
           &= c_g \circ \phi_{\ast,\sigma_0}(\pi_1(\x,\sigma_0))\\
           &= g U g^{-1}.
       \end{align*} 
       Thus, $[\mathfrak p_\phi(p)\star(g)] \in N_G(U)$.
        \item 
       By Corollary~\ref{samemaponcover}, we have $\phi^{(k_{\sigma_0}=g)} \sim_{\sigma_0} \phi$ if and only if both
       $$
       \phi_{\ast,\sigma_0} = \phi^{(k_{\sigma_0}=g)}_{\ast,\sigma_0} = c_g \circ \phi_{\ast,\sigma_0}
       $$
       and 
       $$
       \tilde \phi([p\r) = \tilde \phi^{(k_{\sigma_0}=g)}([p\r) = g \cdot \tilde \phi([p\r) \quad \text{for all $[p\r \in V\tilde X_{\sigma_0}$.}
       $$
       Since $\tilde \phi$ is an isomorphism by Lemma~\ref{lemcoveringisom}, this is equivalent to the condition $g \in C_G(U)\cap K$.
    \end{aufzii}
    
\end{proof}
Together, Lemma~\ref{lemmafornormalizer} and Proposition~\ref{mainlemma1}~(i) suggest the definition of the following map, which is closely analogous to the classical topological situation. This map will be the main ingredient in the proof of the Main Theorem~\ref{mainthm}.
\begin{lemdef}
\label{defiepsilon}
    Given $[\mathfrak p_\phi(p) \star (g)] \in N_G(U)$ as in Lemma~\ref{lemmafornormalizer}, we can choose a deck transformation $[\eta^g_p]$ over an automorphism $l^g_p : X \to X$ provided by Proposition~\ref{mainlemma1}~(i) such that
    $$
    \phi \circ \eta^g_p \sim_{\sigma_0} \phi^{(k_{\sigma_0} = g)} \quad \text{and}  \quad l^g_p(\sigma_0) = \sigma_1.
    $$
    Then $$\varepsilon : N_G(U) \to \hbox{Deck}(\phi), h = [\mathfrak p_\phi(p) \star (g)] \mapsto [\eta_p^g]$$
    is a well defined map.
\end{lemdef}
\begin{proof}
   Suppose $h = [\mathfrak p_\phi(p_1) \star (g_1)] = [\mathfrak p_\phi(p_2) \star (g_2)] \in N_G(U)$. Then both $p_1$ and $p_2$ terminate in the same vertex $\sigma_1\in VX$. Thus, there exists $k \in G_{\sigma_1}$ such that $g_2 = \phi_{\sigma_1}(k)g_1$. Set $\eta^i := \eta_{p_i}^{g_i}$ over $l^i :=l^{g_i}_{p_i}: X \to X$ for $i = 1,2$. Then, $\phi \circ \eta^i \sim_{\sigma_0} \phi^{(k_{\sigma_0}=g_i)}$ for $i =1,2$. We compute
    $$\phi \circ (\eta^1)^{(s_{\sigma_0} = k)} = (\phi \circ \eta^1)^{(s_{\sigma_0}=\phi_{\sigma_1}(k))} \sim_{\sigma_0} \phi^{(k_{\sigma_0}=\phi_{\sigma_1}(k)g_1)} = \phi^{(k_{\sigma_0}=g_2)} \sim_{\sigma_0} \phi \circ \eta^2.$$ By Lemma~\ref{basepointcor}~(ii) it follows that $(\eta^1)^{(s_{\sigma_0} = k)} \sim_{\sigma_0} \eta^2$ which yields $\eta^1 \sim \eta^2$. This proves that $\varepsilon$ is well defined.
\end{proof}
The Main Theorem~\ref{mainthm} now follows from the next theorem.
\begin{thm}
\label{mainthmtext}
     Let $K:=\ker(\pi_1(\y,\tau_0) \curvearrowright\tilde Y_{\tau_0})$ and $C := C_G(U) \cap K$. Then the map $\varepsilon$ from Definition~\ref{defiepsilon} 
     is a surjective homomorphism of groups such that $\ker(\varepsilon) = C\cdot U$. In particular, $$\hbox{Deck}(\phi) \cong \faktor{N_G(U)}{C \cdot U}.$$
\end{thm}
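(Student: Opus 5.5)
We prove the three assertions in turn: $\varepsilon$ is a homomorphism, it is surjective, and its kernel is $C\cdot U$. The isomorphism then follows from the first isomorphism theorem. Throughout we use the representation $h=[\mathfrak p_\phi(p)\star(g)]$ from Lemma~\ref{lemmafornormalizer}, and the fact (Lemma~\ref{lemcoveringisom} together with Definition and Lemma~\ref{deficharactsubgroup}) that $\tilde\phi$ and $\phi_\ast$ are injective. By Lemma~\ref{basepointcor}~(ii), a deck transformation with prescribed underlying basepoint image is determined up to $\sim_{\sigma_0}$ by the twisted morphism $\phi^{(k_{\sigma_0}=g)}$ it realizes.

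\emph{Homomorphism.} Let $h_i=[\mathfrak p_\phi(p_i)\star(g_i)]\in N_G(U)$ with $\eta^i:=\eta^{g_i}_{p_i}$ over $l^i$, and put $\sigma_i:=t(p_i)$. We compute a representative of $h_1h_2$ adapted to $\eta^1\circ\eta^2$. Since $\phi\circ\eta^1\sim_{\sigma_0}\phi^{(k_{\sigma_0}=g_1)}$, Proposition~\ref{characthomotopy} gives $\mathfrak p_\phi(\mathfrak p_{\eta^1}(p_2))\sim(g_1)\star\mathfrak p_\phi(p_2)\star(k^{-1})$ for a suitable $k\in G_{\tau_0}$; here $k=k^{\eta^1}_{l^1(\sigma_2)}$ is the homotopy element from Lemma~\ref{lemma1}. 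Then
\[
h_1h_2=[\mathfrak p_\phi(p_1)\star(g_1)\star\mathfrak p_\phi(p_2)\star(g_2)]=[\mathfrak p_\phi(p_1\star\mathfrak p_{\eta^1}(p_2))\star(k g_2)],
\]
so $h_1h_2$ is represented by the path $p_1\star\mathfrak p_{\eta^1}(p_2)$, which ends at $l^1(\sigma_2)=l^1l^2(\sigma_0)$, together with the group element $kg_2$. Lemma~\ref{compositionlemma} applied to $\eta^1\circ\eta^2$ shows that $\phi\circ\eta^1\circ\eta^2\sim_{\sigma_0}\phi^{(k_{\sigma_0}=k^{\eta^1}_{\sigma_{1,2}}k^{\eta^2}_{\sigma_2})}$. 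The remaining task is to match $k^{\eta^1}_{\sigma_{1,2}}k^{\eta^2}_{\sigma_2}$ with $kg_2$. The normalization in Lemma~\ref{lemma1}~(ii) gives $k^{\eta^2}_{\sigma_2}=g_2$, and $k=k^{\eta^1}_{l^1(\sigma_2)}$. Once these agree, Lemma~\ref{basepointcor}~(ii) together with well-definedness (Definition and Lemma~\ref{defiepsilon}) yields $\varepsilon(h_1h_2)=[\eta^1\circ\eta^2]$.

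\emph{Surjectivity.} Let $[\eta]\in\hbox{Deck}(\phi)$ lie over $l$, and set $\sigma_1:=l(\sigma_0)\in f^{-1}(\tau_0)$. Lemma~\ref{lemma1}~(ii) gives $\phi\circ\eta\sim_{\sigma_0}\phi^{(k_{\sigma_0}=g)}$ with $g=k^\eta_{\sigma_1}$. Choose any $p\in\mathcal P_{\sigma_0}^{\sigma_1}(\x)$, which exists by connectedness. Proposition~\ref{mainlemma1}~(i) shows $[\mathfrak p_\phi(p)\star(g)]\in N_G(U)$, and by Lemma~\ref{basepointcor}~(ii) its image under $\varepsilon$ is $[\eta]$.

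\emph{Kernel.} First, $U\subseteq\ker\varepsilon$. An element of $U$ is $[\mathfrak p_\phi(p)]=[\mathfrak p_\phi(p)\star(1)]$ with $p$ a loop at $\sigma_0$, so $\mathrm{id}_\x$ realizes it and $\varepsilon=[\mathrm{id}]$. Second, $C\subseteq\ker\varepsilon$. For $g\in C$ we have $g=[\mathfrak p_\phi(1_{\sigma_0})\star(g)]$, and by Proposition~\ref{mainlemma1}~(ii) $\phi^{(k_{\sigma_0}=g)}\sim_{\sigma_0}\phi=\phi\circ\mathrm{id}$, so again $\varepsilon(g)=[\mathrm{id}]$. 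Since $\varepsilon$ is a homomorphism, $C\cdot U\subseteq\ker\varepsilon$; note that $C\cdot U$ is a subgroup because $C$ centralizes $U$. Conversely, suppose $\varepsilon([\mathfrak p_\phi(p)\star(g)])=[\mathrm{id}]$, so $\eta^g_p\sim\mathrm{id}$. Then the underlying morphism is $\mathrm{id}_X$, hence $p$ is a loop at $\sigma_0$ and $[\mathfrak p_\phi(p)]\in U$. Let $(s_\sigma)$ be the homotopy from $\mathrm{id}$ to $\eta^g_p$. By Lemma~\ref{lemhomotopyrespectscomposition} and Proposition~\ref{characthomotopy}, $\phi\circ\eta^g_p\sim_{\sigma_0}\phi^{(k_{\sigma_0}=\phi_{\sigma_0}(s_{\sigma_0}))}$. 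Comparing with $\phi^{(k_{\sigma_0}=g)}$ gives $\phi^{(k_{\sigma_0}=g\,\phi_{\sigma_0}(s_{\sigma_0})^{-1})}\sim_{\sigma_0}\phi$, so by Proposition~\ref{mainlemma1}~(ii) the element $c:=g\phi_{\sigma_0}(s_{\sigma_0})^{-1}$ lies in $C$. Hence
\[
h=[\mathfrak p_\phi(p\star(s_{\sigma_0}))]\cdot c\in U\cdot C=C\cdot U.
\]

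The main obstacle is the homomorphism step, specifically the bookkeeping showing that the homotopy element $k$ arising from transporting $p_2$ through $\eta^1$ is exactly $k^{\eta^1}_{\sigma_{1,2}}$ in the normalization of Lemma~\ref{lemma1}, so that Lemma~\ref{compositionlemma} applies on the nose. To handle it, I would fix once and for all the normalized homotopy families of Lemma~\ref{lemma1}, read off $k$ directly from $(\dagger)$ applied to $p_2$, and rely on Definition and Lemma~\ref{defiepsilon} to absorb the choice of representative.
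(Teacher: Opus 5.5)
Your argument is correct. Surjectivity and the kernel computation follow the paper essentially verbatim. You prove $U\subseteq\ker\varepsilon$ and $C\subseteq\ker\varepsilon$ separately and then invoke the homomorphism property, whereas the paper writes $h=[\mathfrak p_\phi(p)\star(g)]$ with $p$ a loop and $g\in C$ directly; this difference is cosmetic.

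The genuine difference is in the homomorphism step.
\begin{itemize}
\item The paper works with whatever homotopy families $(k^{\eta^i}_\sigma)_\sigma$ Lemma~\ref{lemma1} provides, so it only knows $z_i=(k^{\eta^i}_{\sigma_i})^{-1}g_i\in C$. It then moves $z_1$ past the middle factor by conjugation, using $C\trianglelefteq N_G(U)$. This yields the representative $\bigl(p_1\star\mathfrak p_{\eta^1}(p_2),\,k^{\eta^1}_{\sigma_{1,2}}k^{\eta^2}_{\sigma_2}z\bigr)$ with $z\in C$. Finally it discards $z$, because $\phi^{(k_{\sigma_0}=az)}\sim_{\sigma_0}\phi^{(k_{\sigma_0}=a)}$ for $z\in C$.
\item You instead normalize the families so that $k^{\eta^i}_{\sigma_i}=g_i$. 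Then your representative $(p_1\star\mathfrak p_{\eta^1}(p_2),\,kg_2)$ matches Lemma~\ref{compositionlemma} on the nose, and no correction term or normality argument is needed.
\end{itemize}
Your route is shorter. The paper's route shows explicitly that the ambiguity in the choice of homotopies always lies in $C$, which is exactly why $C$ is harmless.

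However, the justification you give for the normalization is wrong as stated. Lemma~\ref{lemma1}~(ii) does not yield $k^{\eta^2}_{\sigma_2}=g_2$. For an arbitrary family, it only gives, together with Proposition~\ref{mainlemma1}~(ii), that $(k^{\eta^2}_{\sigma_2})^{-1}g_2\in C$.

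You must \emph{choose} the family. Homotopy families compose by entrywise multiplication. So compose the homotopy ($g_i$ at $\sigma_0$, $1$ elsewhere) from $\phi$ to $\phi^{(k_{\sigma_0}=g_i)}$ with a relative homotopy from $\phi^{(k_{\sigma_0}=g_i)}$ to $\phi\circ\eta^i$. The result is a homotopy from $\phi$ to $\phi\circ\eta^i$ with $\sigma_0$-entry $g_i$, i.e.\ $k^{\eta^i}_{\sigma_i}=g_i$.

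This same composite, for $i=1$, is what silently justifies the leading factor $(g_1)$ in your application of Proposition~\ref{characthomotopy}. Since the proof of Lemma~\ref{compositionlemma} works for any choice of families, you may use these normalized ones there.

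A minor point in the kernel step: let $a=\phi_{\sigma_0}(s_{\sigma_0})$. From $\phi^{(k_{\sigma_0}=g)}\sim_{\sigma_0}\phi^{(k_{\sigma_0}=a)}$ one gets $a^{-1}g\in C$, not directly $ga^{-1}$. Your $c=ga^{-1}$ equals $a^{-1}g$ because $a\in U$ commutes with $a^{-1}g\in C_G(U)$. Taking $c:=a^{-1}g$ instead makes $h=[\mathfrak p_\phi(p\star(s_{\sigma_0}))]\cdot c$ immediate.
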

In the following proof, when calculating with homotopy classes of $\x$-paths or $\y$-paths, we will generally not distinguish between elements $g$ of local groups and paths $(g)$ of length 0. Moreover, we may sometimes omit the $\star$-operator when composing paths to improve readability.
\begin{proof}
    We first show that $\varepsilon$ is a homomorphism. Let $h_1,h_2 \in N_G(U)$ such that $h_i = [\mathfrak p_\phi(p_i) g_i]$ for $i = 1,2$. For $\eta^i := \eta_{p_i}^{g_i}$ over $l^i :=l^{g_i}_{p_i}: X \to X$ for $i = 1,2$ we observe that $p_1 \star \mathfrak p_{\eta^1}(p_2)$ is an $\x$-path that connects $\sigma_0$ with $l^1 \circ l^2(\sigma_0)$, as illustrated in Figure~\ref{Figure3}. Set $\sigma_i = l^i(\sigma_0)$ for $i = 1,2$ and $\sigma_{1,2} = l^1 \circ l^2$. 
    \begin{figure}[h]
\centering
\includegraphics[scale=1]{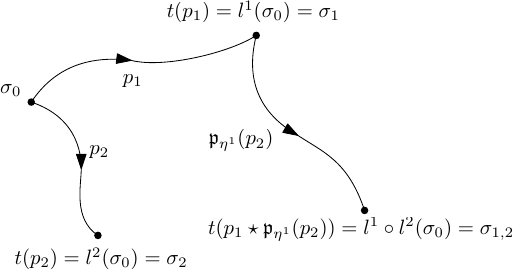}
\caption{The $\x$-path $p_1 \star \mathfrak p_{\eta^1}(p_2)$ connecting $\sigma_0$ and $\sigma_{1,2}$ in $\x$.}\label{Figure3}
\end{figure}
    Using Proposition~\ref{mainlemma1}~(i) we obtain
    $$
    \phi^{(s_{\sigma_0} = g_i)}\sim_{\sigma_0}\phi \circ \eta^i \sim_{\sigma_0}\phi^{(s_{\sigma_0} =k^{\eta^i}_{\sigma_0})},
    $$
    where $(k^{\eta^i}_\sigma)_\sigma$ is the homotopy from $\phi$ to $\phi \circ \eta^i$ used in Lemma~\ref{lemma1}. By Proposition~\ref{mainlemma1}~(ii) we obtain that $z_i := (k^{\eta^i}_{\sigma_0})^{-1}g_i \in C$ for $i = 1,2$.  Since $\phi \sim \phi \circ \eta^1$, Lemma~\ref{lemma1} yields
    \begin{align*}
        h_1h_2 &= [\mathfrak p_\phi(p_1)g_1][\mathfrak p_\phi(p_2)g_2] = [\mathfrak p_\phi(p_1)g_1\mathfrak p_\phi(p_2)g_2] \\
        &= [\mathfrak p_\phi(p_1)g_1(k^{\eta^1}_{\sigma_1})^{-1}\mathfrak p_\phi(\mathfrak p_{\eta^1}(p_2))k^{\eta^1}_{\sigma_{1,2}}g_2]\\
        &= [\mathfrak p_\phi(p_1)g_1(k^{\eta^1}_{\sigma_1})^{-1}g_1g_1^{-1}\mathfrak p_\phi(\mathfrak p_{\eta^1}(p_2))k^{\eta^1}_{\sigma_{1,2}}k^{\eta^2}_{\sigma_2}z_2]     \\
        &= [\mathfrak p_\phi(p_1)g_1\textbf{$z_1$}g_1^{-1}\mathfrak p_\phi(\mathfrak p_{\eta^1}(p_2))k^{\eta^1}_{\sigma_{1,2}}k^{\eta^2}_{\sigma_2}\textbf{$z_2$}]
    \end{align*}
    Since $h_1h_2, [\mathfrak p_\phi(p_1)g_1] \in N_G(U)$ and $z_1 \in C \trianglelefteq N_G(U)$, it follows that 
    $$
    k := [g_1^{-1}\mathfrak p_\phi(\mathfrak p_{\eta^1}(p_2))k^{\eta^1}_{\sigma_{1,2}}k^{\eta^2}_{\sigma_2}z_2] \in N_G(U).
    $$
    Thus, $k^{-1}z_1k \in C$ and we compute
    \begin{align*}
        h_1h_2 &= [\mathfrak p_\phi(p_1)g_1z_1k] = [\mathfrak p_\phi(p_1)g_1k(k^{-1}z_1 k)]\\
        &= [\mathfrak p_\phi(p_1 \star \mathfrak p_{\eta^1}(p_2))k^{\eta^1}_{\sigma_{1,2}}k^{\eta^2}_{\sigma_2}(z_2k^{-1}z_1k)].
    \end{align*}
    Set $z := z_2 k^{-1}z_1k \in C$ and let $[\eta^3] = \varepsilon(h_1h_2)$. By definition we have 
    $$
    \phi \circ \eta^3 \sim_{\sigma_0} \phi^{(s_{\sigma_0} = k^{\eta^1}_{\sigma_{1,2}}k^{\eta^2}_{\sigma_2}z)} \sim_{\sigma_0} \phi^{(s_{\sigma_0} = k^{\eta^1}_{\sigma_{1,2}}k^{\eta^2}_{\sigma_2})}
    $$
    since $z \in C$. Using Lemma~\ref{basepointcor} and Lemma~\ref{compositionlemma}, we obtain 
    $$
    \varepsilon(h_1h_2) =[\eta^3] = [\eta^1 \circ \eta^2] = [\eta^1] [\eta^2] = \varepsilon(h_1)\varepsilon(h_2)
    $$
    which shows that $\varepsilon$ is a homomorphism.

    \smallskip
    
   Next we show that $\varepsilon$ is surjective. Let $[\eta]$ be a deck transformation over a morphism $l : X \to X$. By definition $\phi \circ \eta \sim \phi$, which yields the existence of an element $g \in G_{\tau_0}$ such that $\phi \circ \eta \sim_{\sigma_0} \phi^{(s_{\sigma_0} = g)}$ by Lemma~\ref{lemma1}. Using Proposition~\ref{mainlemma1}~(i), we obtain that for all $\x$-paths $p$ from $\sigma_0$ to $l(\sigma_0)$ we have $[\mathfrak p_\phi(p)g] \in N_G(U)$ with image $[\eta]$ under $\varepsilon$.

    \smallskip 
    
    Finally, we determine the kernel of $\varepsilon$. Suppose that $[\mathfrak p_\phi(p)g] = h \in N_G(U)$ satisfies $\eta := \eta_p^g \sim \hbox{id}$. Then, the underlying isomorphism $l := l^g_p$ has to be the identity on $X$, so $t(p) = \sigma_0$. Furthermore, there exists $k \in G_{\sigma_0}$ such that $\eta^{(s_{\sigma_0} = k)} \sim_{\sigma_0} \hbox{id}$. We compute
    $$
    \phi = \phi \circ \hbox{id} \sim_{\sigma_0} \phi \circ \eta^{(s_{\sigma_0} = k)} = (\phi \circ \eta)^{(s_{\sigma_0} = \phi_{\sigma_0}(k))} \sim_{\sigma_0} \phi^{(s_{\sigma_0} = \phi_{\sigma_0}(k)g)}.
    $$
    Thus, $\phi_{\sigma_0}(k)g \in C$ by Proposition~\ref{mainlemma1}~(ii) and 
    $$
    h = [\mathfrak p_\phi(p)g] = [\mathfrak p_\phi(p)\phi_{\sigma_0}(k^{-1})][\phi_{\sigma_0}(k)g] = \phi_{\ast,\sigma_0}\left([p \star (k^{-1})] \right) \cdot[\phi_{\sigma_0}(k)g]\in U\cdot C = C \cdot U,
    $$
    since $[p \star (k^{-1})] \in \pi_1(\x,\sigma_0)$. Hence, $\ker(\varepsilon) \subseteq C \cdot U$. 
    
    Conversely, let $h \in U\cdot C = C \cdot U$. Then there exists an $\x$-loop $p$ at $\sigma_0$ and an element $g \in C$ such that $h = [\mathfrak p_\phi(p)g]$. Setting $\eta := \eta_p^g$, we obtain
    $$
    \phi \circ \eta \sim_{\sigma_0}\phi^{(s_{\sigma_0} = g)} \sim_{\sigma_0} \phi = \phi \circ \hbox{id}.
    $$ 
    By Lemma~\ref{basepointcor} this implies $\eta \sim \hbox{id}$ and therefore $C \cdot U = U \cdot C \subseteq \ker(\varepsilon)$. 
    
    Thus, $\ker(\varepsilon) = C \cdot U = U \cdot C$.
\end{proof}

	\bibliographystyle{amsplain}
	\bibliography{Deck_Ref}
\end{document}